\def\@seccntformat#1{%
  \protect\textup{\protect\@secnumfont
    \ifnum\pdfstrcmp{section}{#1}=0 \bfseries\fi
    \csname the#1\endcsname
    \protect\@secnumpunct
  }%
}
\def\@seccntformat#1{%
  \protect\textup{\protect\@secnumfont
    \ifnum\pdfstrcmp{section}{#1}=0 \bfseries\fi
    \ifnum\pdfstrcmp{subsection}{#1}=0 \bfseries\fi
    \csname the#1\endcsname
    \protect\@secnumpunct
  }%
}
\def\section{\@startsection{section}{1}%
  \z@{1.5\linespacing\@plus\linespacing}{.5\linespacing}%
  {\normalfont\bfseries\large\centering}}
\def\@settitle{\begin{center}%
  \baselineskip14\p@\relax
    \normalfont\LARGE
\uppercasenonmath\@title  
  \@title
  \end{center}%
}
\newtheorem{thm}{Theorem}[section]
\newtheorem{prop}[thm]{Proposition}
\newtheorem{lem}[thm]{Lemma}
\theoremstyle{remark}
\newtheorem{remark}[thm]{Remark}
\theoremstyle{definition}
\newtheorem{definition}[thm]{Definition}
\numberwithin{equation}{section}
\newcommand{\R}{\mathbb R}
\newcommand{\h}{{\mathcal H}}
\newcommand{\Y}{{\mathcal Y}}
\newcommand{\T}{\mathbb T}
\newcommand{\C}{\mathbb C}
\def\@settitle{\begin{center}%
  \baselineskip16\p@\relax
    \bfseries
    \normalfont\normalsize
\uppercasenonmath\@title
  \@title
  \end{center}%
}
\begin{document}
\title[Multicomponent Schr\"{o}dinger-\MakeLowercase{g}K\MakeLowercase{d}V Systems]{\bf Well-posedness for multicomponent Schr\"{o}dinger--\MakeLowercase{g}K\MakeLowercase{d}V systems and stability of solitary waves with prescribed mass}

\author[S.~Bhattarai]{\small{Santosh Bhattarai}}
\author[A.~Corcho]{\small{Ad\'{a}n J.~Corcho}}
\author[M.~Panthee]{\small{Mahendra Panthee}}
\address[S.\ Bhattarai]{Trocaire College, 360 Choate Ave, Buffalo, NY 14220, USA}
\email{sntbhattarai@gmail.com}
\address[A.~J.~Corcho]{Instituto de Matem\'atica - Universidade Federal do Rio de Janeiro/UFRJ, Rio de Janeiro-RJ, Brazil}
\email{adan@im.ufrj.br}
\address[M.~Panthee]{IMECC - UNICAMP, 13083-859, Campinas, SP, Brazil}
\email{mpanthee@ime.unicamp.br}

\thanks{\textit{Mathematics Subject Classification}. 35Q53, 35Q55, 35B35, 35B65, 35A15.}
\thanks{\textit{Keywords.} Schr\"{o}dinger-KdV equations, local and global well-posedness, smoothing effects, Bourgain space, normalized solutions, solitary waves, stability, variational methods}

\begin{abstract}
In this paper
we prove the well-posedness issues of the associated initial value problem, the existence of nontrivial solutions with
prescribed $L^2$-norm, and the stability of associated solitary waves for
two classes of coupled nonlinear dispersive equations. The first problem here describes
the nonlinear interaction between two Schr\"{o}dinger type short waves and a generalized  Korteweg-de Vries type long wave and the second
problem describes the nonlinear interaction of two generalized Korteweg-de Vries type long waves with a common Schr\"{o}dinger type short wave.
The results here
extend many of the previously obtained results for two-component coupled Schr\"{o}dinger-Korteweg-de Vries systems.
\end{abstract}

 \maketitle

\tableofcontents

\section{Introduction}

\noindent In this paper, consideration is given to multicomponent nonlinear systems
describing the interaction between long and short dispersive waves. First we are concerned
with a 3-wave system
describing the interaction of two nonlinear Schr\"{o}dinger (NLS)-type short waves
with a generalized Korteweg-de Vries (gKdV)-type long wave and the second system we study contains
two gKdV-type long wave fields and a common NLS-type short wave.
The first problem considered here has the form
\begin{equation}\label{3LSI}
\ \ \ \ \ \ \ \ \left\{
\begin{aligned}
& i\partial_t u_1 + \partial_{x}^{2} u_1+ \gamma_1 |u_1|^{q_1}u_1 = -\alpha_1 u_1 v, \\
& i\partial_t u_2 + \partial_{x}^{2} u_2+ \gamma_2 |u_2|^{q_2}u_2 = -\alpha_2 u_2 v, \\
& \partial_t v + \partial_{x}^{3} v + \beta v^p \partial_x v = -\frac{1}{2}\partial_{x} (\alpha_1|u_1|^2+\alpha_2|u_2|^2),
\end{aligned}
\right.
\end{equation}
and the second system generally takes the form
\begin{equation}\label{3LSINKK}
\left\{
\begin{aligned}
& i\partial_t u + \partial_{x}^{2} u+ \gamma |u|^{q}u = -\alpha_1 u v_1 - \alpha_2 u v_2, \\
& \partial_t v_1 + \partial_{x}^{3} v_1 + \beta_1 v_1^{p_1} \partial_x v_1 = -\frac{1}{2}\alpha_1 \partial_{x} (|u|^2),\\
& \partial_t v_2 + \partial_{x}^{3} v_2 + \beta_2 v_2^{p_2} \partial_x v_2 = -\frac{1}{2}\alpha_2 \partial_{x} (|u|^2),
\end{aligned}
\right.
\end{equation}
where $u, u_1,$ and $u_2$ are $\mathbb{C}$-valued
functions of $(x,t)\in \mathbb{R}^2;$ $v, v_1,$ and $v_2$ are $\mathbb{R}$-valued functions of $(x,t)\in \mathbb{R}^2;$
and the constants $\alpha_j, \gamma_j, \gamma,$ $\beta_j$ and $\beta$ are reals which depend on
the context in which the system of equations have been derived. Here $v, v_1, v_2$ characterize
long-wave fields and $u, u_1, u_2$ represent short wave envelopes.
This type of phenomenon has been predicted in a variety of contexts in fluid mechanics, plasma physics,
nonlinear optics, acoustics, to mention but a few (for an excellent list of references, the reader may consult \cite{[AA],[San3LS]}).
Throughout this paper we refer to the systems \eqref{3LSI}  and \eqref{3LSINKK} simply as $(2+1)$-component NLS-gKdV and
$(1+2)$-component NLS-gKdV systems, respectively.

\smallskip

\noindent System \eqref{3LSI} admits three conserved quantities, i.e., time independent quantities, which will play an important role in this paper.
The first conserved quantity for \eqref{3LSI} is the energy functional $E$ defined by
\begin{equation}\label{Edef}
E(\Delta)=\int_{-\infty}^{\infty} \left( \sum_{j=1}^{2}\left(|\partial_xu_{j}|^2-\tau_j|u_j|^{q_j+2}-\alpha_j|u_j|^2v \right)+ |\partial_xv|^2-\tau v^{p+2} \right)\ dx
\end{equation}
where $\Delta=(u_1,u_2,v),$ and $\tau_j, 1\leq j\leq 2,$ and $\tau$ are given by
\begin{equation*}
\tau_j=\frac{2\gamma_j}{q_j+2}\ \ \textrm{and}\ \
\tau=\frac{2\beta}{(p+1)(p+2)}.
\end{equation*}
Other two conserved quantities for the flow defined by \eqref{3LSI} are
\begin{equation}\label{con-2}
H(\Delta)=\int_{-\infty}^{\infty} v^{2} \ dx + 2~\textrm{Im} \int_{-\infty}^{\infty}\sum_{j=1}^{2} u_j~\overline{\partial_xu_j} \ dx,
\end{equation}
where the bar denotes complex conjugation and $\textrm{Im}$ denotes the imaginary part of the complex function, and the component masses
\begin{equation}\label{con-1}
Q(u_j)=\int_{-\infty}^{\infty} |u_j|^2 \ dx,\ j=1,2.
\end{equation}

\smallskip

\noindent The first conserved quantity for \eqref{3LSINKK} is the energy functional $K$ defined by
\begin{equation}\label{EdefNKK}
K(U)=\int_{-\infty}^{\infty} \left( \sum_{j=1}^{2}\left(|\partial_xv_{j}|^2-b_j v_j^{p_j+2}-\alpha_j|u|^2v_j \right)+ |\partial_xu|^2-a |u|^{q+2} \right)\ dx
\end{equation}
where $U=(u,v_1,v_2),$ and $b_j, 1\leq j\leq 2,$ and $a$ are given by
\begin{equation*}
b_j=\frac{2\beta_j}{(p_j+1)(p_j+2)}\ \ \textrm{and}\ \
a=\frac{2\gamma}{q+2}.
\end{equation*}
Other two conservation laws of \eqref{3LSINKK} associated with symmetries are
\begin{equation}\label{con-1NKK}
G(U)=\int_{-\infty}^{\infty} \left(v_1^{2}+v_2^{2}\right) \ dx + 2~\textrm{Im} \int_{-\infty}^{\infty} u~\overline{\partial_xu} \ dx,
\end{equation}
which arises from the invariance of \eqref{3LSINKK} under space translations $x\to x+\theta,$
and the component mass
\begin{equation}\label{con-2NKK}
Q(u)=\int_{-\infty}^{\infty} |u|^2 \ dx,
\end{equation}
which arises from the invariance of \eqref{3LSINKK} under phase shifts $u\to e^{i\theta}u.$

\smallskip
\noindent The first purpose of this paper is to consider the question of well-posedness of the initial value
problem (IVP) associated to  the systems \eqref{3LSI} and \eqref{3LSINKK}. We adapt the standard notion
of the well-posedness in the sense of J. Hadamard, which includes existence, uniqueness, persistence property (i.e., the solution is uniquely determined and it has the
same regularity as the initial data), and continuous dependence of the solution upon the given data.

\smallskip

\noindent The IVP asociated to the $(1+1)$-component NLS-KdV system has been studied extensively in the literature.
In the case when $u_2\equiv0,\ 2p=q_1=2,$
$\beta=1,$ and $\gamma_1\in \mathbb{R},$ the local well-posedness was studied in \cite{[Tsu],BOP}.
Here the cases $\gamma_1=0$ and $\gamma_1\neq 0$  describe the resonant and non-resonant interactions, respectively.
In the resonant case,
Guo and Miao \cite{Guo} established the global well-posedness result of $(1+1)$-component NLS-KdV system
in the energy space $H^{1}(\mathbb{R})\times H^{1}(\mathbb{R}).$ In \cite{[Pec]}, Pecher improved these
results and obtained the local well-posedness for the data
in $H^{s}(\mathbb{R})\times H^{s}(\mathbb{R})$ with $s>0$ and the global-posedness
for $({u_1}_0,v_0)\in H^{\frac{3}{5}+}(\mathbb{R})\times H^{\frac{3}{5}+}(\mathbb{R})$
in the resonant case and for $({u_1}_0,v_0)\in H^{\frac{2}{3}+}(\mathbb{R})\times H^{\frac{2}{3}+}(\mathbb{R})$
in the non-resonant case. In \cite{CL}, Corcho and Linares improved the
local well-posedness result obtained in \cite{[Pec]} to a larger region of the Sobolev indices.
Recently, Wu \cite{WU} obtained the best local well-posedness result for the $(1+1)$-component NLS-KdV system in the resonant case.
Our aim here is to obtain analogous results to the full system of
equations \eqref{3LSI} and \eqref{3LSINKK}, considering general power nonlinearities, in the Sobolev spaces of the form $H^s\times H^{s}\times H^{k}$ and $H^s\times H^{k}\times H^{k},$ respectively.

\smallskip

\noindent The well-posedness issues here are addressed considering two different cases, viz., general power type  and integer power type nonlinearities. For the general power type  nonlinearities, we use smoothing effects of the associated linear groups combined with the maximal function type estimates to prove the local well-posedness in the energy space $H^1\times H^1\times H^1$. Also, with certain restriction on the indices of nonlinearity, we obtain global solution in this space. To be precise,  the lack of $L^2$-conserved quantity for the gKdV part requires this restriction on the power of nonlinearities (see Theorems \ref{Thm1} and \ref{Thm2} and their proofs below).
In the particular case when the indices of nonlinearities $q_1=q_2=2p=2$ for the system \eqref{3LSI} and $2p_1=2p_2=q=2$ for the system \eqref{3LSINKK}, we use the estimates obtained  in \cite{WU} in the framework of Bourgain spaces to get local well-posedness results for the less regular data (see Theorems~\ref{Thm1.1} and \ref{Thm1.2}).

\smallskip

\noindent Next, attention will be focused to prove the
existence of nontrivial (i.e., all components non-zero)
solutions $(\sigma_1,\sigma_2,c,\phi_1,\phi_2,w)\in \mathbb{R}_{+}^2\times \mathbb{R}\times \h$
of the system of equations
\begin{equation}\label{ODE}
\left\{
\begin{aligned}
& -\phi_1^{\prime \prime}+\sigma_1 \phi_1=\gamma_1|\phi_1|^{q_1}\phi_1+\alpha_1\phi_1 w, \\
& -\phi_2^{\prime \prime}+\sigma_2 \phi_2=\gamma_2|\phi_2|^{q_2}\phi_2+\alpha_2\phi_2 w, \\
& -w^{\prime \prime}+c w=\frac{\beta}{p+1}w^{p+1}+\sum_{j=1}^{2}\frac{\alpha_j}{2}|\phi_j|^2.
\end{aligned}
\right.
\end{equation}
System of ordinary differential equations \eqref{ODE} can be considered as
the defining equation for travelling solitary waves of \eqref{3LSI}. Solitary waves of interest here have the form
\begin{equation}\label{SO}
\left\{
\begin{aligned}
& u_1(x,t)=e^{i\omega_1 t}e^{ic(x-ct)/2}\phi_1(x-ct) ,\\
& u_2(x,t)= e^{i\omega_2 t}e^{ic(x-ct)/2}\phi_2(x-ct),\\
& v(x,t) =w(x-ct),
\end{aligned}
\right.
\end{equation}
where $\phi_1,\phi_2:\mathbb{R}\to \mathbb{C},\ w:\mathbb{R}\to \mathbb{R}$ all vanish
at $\pm \infty,$
and the parameters $\omega_1, \omega_2, c$ are reals.
Substituting
solitary waves ansatz \eqref{SO} into \eqref{3LSI},
one easily finds that $(\phi_1,\phi_2,w)$ satisfies the
time-independent 3-component NLS-gKdV system \eqref{ODE} with $\xi=x-ct$ and $\sigma_j=\omega_j-c^2/4.$

\smallskip

\noindent Given any $(r,l,m)\in \mathbb{R}_{+}^3,$
we look for solutions $(\phi_1,\phi_2,w)$ of \eqref{ODE} satisfying the condition
\begin{equation}\label{norcon2}
\|\phi_1\|_{L^2}^2 =r,\ \|\phi_2\|_{L^2}^2 =l,\ \textrm{and}\ \|w\|_{L^2}^2 =m.
\end{equation}
These type of solutions are of particular interest in physics.
In the literature, these solutions are sometimes referred to as $L^2$-normalized solutions.
To infer the existence of such solutions, we study
the constrained variational problem of finding, for given $(r, l, m)\in \mathbb{R}_{+}^3,$
the extremum of the functional $E$ over the set $S_r\times S_l \times K_m,$ where for any $\lambda>0$ we define
\begin{equation*}
S_\lambda:=\left\{u\in H_{\mathbb{C}}^1:\|u\|_{L^{2}}^2=\lambda \right\}\ \textrm{and}\ K_\lambda:=\left\{u\in H_{\mathbb{R}}^1:\|u\|_{L^{2}}^2=\lambda \right\}.
\end{equation*}
The key ingredient in the proof the existence of minimizers is the concentration compactness lemma
introduced by P.L. Lions \cite{[L1]}. The parameters $\sigma_1, \sigma_2,$ and $c,$
in this situation, appear as Lagrange multipliers associated with the constraints.

\smallskip

\noindent Several work has been done in the last few years on the existence problem for solutions
of coupled nonlinear systems such as \eqref{ODE}.
All these works have been mainly focused on $(1+1)$-component coupled
systems such as NLS-NLS and NLS-KdV systems.
Moreover, most works treat the problem in which the parameters such as $\sigma_1, \sigma_2,$ $c$ are being fixed.
There are very few papers which deal with the existence problem of
prescribed $L^2$-norm solutions, for instance, see \cite{[AA], [AB11], [Bar],[Gar1], [Noris]} for the results on
prescribed $L^2$-norm solutions to two-component coupled systems.
Up to our knowledge, \cite{[SanDCDS],[Ikoma]} are the only available works
which obtain prescribed $L^2$-norm solutions for coupled nonlinear systems with three or more equations.
The techniques in \cite{[SanDCDS]} follow the ideas used in
\cite{[AB11]} to obtain existence and stability
results of $L^2$-normalized solitary waves for three component nonlinear Schr\"{o}dinger system.
In \cite{[Ikoma]}, a different technique was used to prove the stability of the set
of minimizers to a certain minimizing problem under multiconstraint conditions.
In the present paper the situation is substantially
different compared to that of \cite{[SanDCDS], [Ikoma]} due to the presence of the additional conserved quantity $H(f,g,h).$
Here we need to
tackle two different variational problems in order to establish the stability result.
Finally, we also mention the papers
\cite{[San3LS],[Colo]} where different techniques were
used to prove the existence of bound state solutions for multi-component NLS-KdV systems.

\smallskip

\noindent Our final goal is to study the stability properties of solitary wave solutions of  \eqref{3LSI}.
The mathematically exact stability theory for travelling solitary waves began
with a $1972$ paper of T.~B.~Benjamin (\cite{[Benj]}) for the KdV equation
\begin{equation}\label{kDV}
\partial_tu+u\partial_xu+\partial_x^3u=0.
\end{equation}
According to Benjamin, if $U(x,t)$ is a solution of \eqref{kDV} whose
initial profile $U(x,0)=U_0(x)$ is sufficiently close (in an appropriate function space)
to a KdV solitary wave $u(x,t)=\varphi_C(x-Ct),$ where $\varphi_C(x)$ defined as
\begin{equation*}
\varphi_C(x)=\frac{3C}{\textrm{cosh}^2\left(\frac{1}{2}\sqrt{C}~x\right)};
\end{equation*}
then the quantity
\begin{equation}\label{benjq}
\inf_{x\in \mathbb{R}}~\sup_{x\in \mathbb{R}}|U(x,t)-\varphi_C(x+y)|
\end{equation}
will remain small for all times $t\geq 0.$
Similar stability theorems have since been proved for solitary-wave solutions of
many other nonlinear wave equations. Notice that the quantity \eqref{benjq} measures the difference in sup norm between the
profile $u(x,t)$ for fixed $t$ and the orbit consisting of all translates of $\varphi_C.$
Since, for system \eqref{3LSI}, we do not know if for given phases $\omega_1,\omega_2$ and wave speed $c,$
solitary-wave solutions are unique up to translation,
we use the notion of stability in a broad sense: namely, the
stability of a set consisting of possibly different solitary-wave profiles functions
rather than the stability of the set of translates of a
single solitary-wave profile.
The precise details of our stability results are contained in Section~\ref{Mainresults} (see Theorem~\ref{P2thm} and \ref{stabilitytheorem}).

\medskip

The structure of the paper is as follows.
In Section~\ref{Mainresults}, we start with some notations that will be used throughout the paper and provide the statement of main results.
Section~\ref{LWP-Theory} addresses the issues of well-posedness theory.
In Section~\ref{VarProbP1}, we prove the existence result for $L^2$ normalized solitary-wave solutions for $(2+1)$-component NLS-gKdV system. Finally,
Section~\ref{fullvar} studies an alternative variational characterization of solitary waves, along with their stability properties.

\section{Statement of Main Results}\label{Mainresults}
In this section, we introduce some notations and function spaces that will be used throughout
the paper and state our main results.
\subsection{Notations and assumptions}
We denote by $\mathbb{R}_{+}$ the set $\{x\in \mathbb{R}:x>0\}$ and
by $S^1$ the set $\{z\in \mathbb{C}: z=e^{i\theta}, \theta \in \mathbb{R}\}.$
For $1\leq p\leq \infty,$ we denote by $L^p=L^{p}(\mathbb{R})$ the Banach space of Lebesgue measurable functions
on $\mathbb{R}$ with the usual norm $\|\cdot\|_{L^p}.$
For $s\in \R$, the $L^2$-based Sobolev space of order $s$ of   complex-valued
functions $f$ will be denoted by $H^s_{\C} = H^s_{\C}(\R)$
and the usual norm on this space is denoted by $\|\cdot\|_{H^s}$. More generally, if $B$ is
any Banach space the norm on $B$ will be denoted by $\|\cdot\|_B.$
We denote by $H_{\mathbb{R}}^s=H_{\mathbb{R}}^s(\mathbb{R})$ the space of all real-valued
functions $f$ in $H_{\mathbb{C}}^s$ and $H_{+}^s(\mathbb{R})$ denotes the
space of all functions $f$ in $H_{\mathbb{R}}^s$ such that $f(x)>0$ on $\mathbb{R}.$
If $B_1$ and $B_2$ are Banach spaces, then their Cartesian product $B_1\times B_2$ is a Banach space with a product
norm defined by $\|(f,g)\|_{B_1\times B_2}:=\|f\|_{B_1}+\|g\|_{B_2}.$ In particular,
we define
\[
\h=H_{\mathbb{C}}^1\times H_{\mathbb{C}}^1\times H_{\mathbb{R}}^1\ \ \textrm{and}\ \ \Y=H_{\mathbb{C}}^1\times H_{\mathbb{R}}^1\times H_{\mathbb{R}}^1.
\]
If $B$ is a Banach space and $G$ is a subset of $B,$ we say that a sequence $\{x_n\}$ in $B$ converges to $G$ if
\begin{equation*}
\lim_{n\to \infty}\ \inf_{g\in G}\|x_n-g\|_B=0.
\end{equation*}
Also, for each $T>0,$ we denote by $\mathcal{C}([0,T]; \ B)$ the Banach space
of continuous maps $f$ from $[0,T]$ to $B,$ with norms given by
\begin{equation*}
\|f\|_{\mathcal{C}([0,T]; \ B)} = \sup_{t\in [0,T]}\|f(t)\|_B.
\end{equation*}
\noindent For any $a\in \mathbb{R},$ we denote by $T_a$ the translation operator defined by $(T_af)(\cdot)=f(\cdot+a)$.
Also, we use notation $A_1\lesssim A_2$ if there exist constants $C_1$ and $C_2$ such that $A_1\leq C_1A_2$ and $A_2\leq C_2A_1$.
The symbol $C$ will be used throughout to denote various
constants whose exact values are not important and which may differ from one line to the
next.

\smallskip

\noindent Following standard notations in the literature,  $D^s$ and  $J^s$, respectively, denote the multiplication operators (via the Fourier transform) with symbols $|\xi|^s$ and $(1+\xi^2)^{s/2}$. Thus, $J^{-s}$ is the usual Bessel potential and the classical Sobolev space in the line is defined by  $H^s=J^{-s}(L^2)$ with $\|\varphi\|_{H^s}=\|J^s\varphi\|_{L^2}$.  Also, throughout the work for any $1\le p \le \infty$ we denote by $p'$ the exponent such that $\frac1p+\frac{1}{p'}=1$ and we will used the space-time Lebesgues spaces $L^{\rho}_TL^{\nu}_x$ and
$L^{\nu}_xL^{\rho}_T$ equipped with the norms
\begin{align}
& \|f(x,t)\|_{L^{\rho}_TL^{\nu}_x}=\big\|\,\| f(\cdot,t) \|_{L^{\nu}(\mathbb{R})}\,\big\|_{L^{\rho}([0, T])},\nonumber\\
& \|f(x,t)\|_{L^{\nu}_xL^{\rho}_T}=\big\|\,\| f(x,\cdot) \|_{L^{\rho}([0, T])}\,\big\|_{L^{\nu}(\mathbb{R})}.\nonumber
\end{align}

We use $S(t)$ and $V(t)$ given by
\begin{equation}\label{lin-prop}
S(t)=e^{it\partial_x^2}\quad\textrm{and}\quad V(t)=e^{-t\partial_x^3},
\end{equation}
 to denote  the linear propagators for the Schr\"{o}dinger and the KdV equations respectively.
Given $s,k\in \R$ and $0<b<1$, we define two function spaces $X^{s,b}$ and $Y^{k,b}$ as the completion of the Schwartz space $\mathcal{S}(\R^n)$ with respect to the norms
\begin{equation}\label{xsb-norm}
\begin{split}
\|f\|_{X^{s,b}}&:=\int\!\!\!\int\langle\xi\rangle^{2s}\langle\tau+\xi^2\rangle^{2b} |\widehat{f}(\xi,\tau)|^2d\tau d\xi\\
&=\|S(-t)f\|_{H_t^b(\R;H_x^s)}
\end{split}
\end{equation}
and
\begin{equation}\label{ykb-norm}
\begin{split}
\|f\|_{Y^{k,b}}&:=\int\!\!\!\int\langle\xi\rangle^{2k}\langle\tau-\xi^3\rangle^{2b} |\widehat{f}(\xi,\tau)|^2d\tau d\xi\\
&=\|V(-t)f\|_{H_t^b(\R;H_x^k)},
\end{split}
\end{equation}
where $\langle\cdot\rangle :=1+|\cdot|$.

Finally, we introduce the following even smooth cut-off function $\psi \in C_0^{\infty}(\R)$ given by
\begin{equation}\label{cut-off}
\psi(t)=\begin{cases} 1, \quad\textrm{for}\ |t|\leq 1,\\
0, \quad\textrm{for}\ |t|\geq 2,
\end{cases}
\end{equation}
and  define $\psi_T(t):=\psi(\frac t{T})$.

\smallskip

We now state our main results.
\subsection{Well-posedness results}

Here we state the main results about  well-posedness theory established in this work for the IVPs associated to the systems \eqref{3LSI} and \eqref{3LSINKK}.

\begin{thm}\label{Thm1}
Consider system \eqref{3LSI} with $1\le p=\frac{n_1}{n_2}\in \mathbb{Q}^+$,  $n_2$ odd,  and $q_j>0$ for $j=1,2$. For any given data $({u_1}_0, {u_2}_0, v_0) \in H^1\times H^1 \times H^1$ there is a positive time $T=T(\|{u_1}_0\|_{H^1}, \|{u_2}_0\|_{H^1}, \|v_0\|_{H^1})$ and a unique solution
$(u_1, u_2, v)$ to the IVP associated to to \eqref{3LSI} such that
\begin{align}
&(u_1, u_2, v)\in \mathcal{C}([0, T];\, H^1\times H^1\times H^1)\label{Thm1-cond-a}\\
&\|u_1\|_{L^2_xL^{\infty}_T} + \|u_2\|_{L^2_xL^{\infty}_T} +  \|v\|_{L^2_xL^{\infty}_T}\lesssim (1+T)^{\frac3{4}+}.\label{Thm1-cond-b}
\end{align}
Moreover, for any $T' < T$ there exists a neighborhood $\mathcal{V}$  of $(u_{1_0}, u_{2_0}, v_0)$ in $ H^1\times H^1 \times H^1$ such that the map
$$(\tilde{u_1}_0, \tilde{u_2}_0, \tilde{v}_0)\longmapsto (\tilde{u}_1, \tilde{u}_2, \tilde{v})$$
from  $\mathcal{V}$ into the class defined by \eqref{Thm1-cond-a}-\eqref{Thm1-cond-b} with $T'$ instead of $T$ is continuous. Also, for $1\leq p<\frac43$
and $q_j$ for  $j=1,2$, verifying
\begin{equation}
0< q_j <
\begin{cases}
\;4     & \text{if}\quad \tau_j > 0,\\
\infty  & \text{if}\quad \tau_j \le 0,
\end{cases}	
\end{equation}
the local solution can be extended to any time interval $[0, T]$ with $T$ arbitrary large.
\end{thm}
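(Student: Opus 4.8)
The plan is to establish the local theory by the Kenig--Ponce--Vega contraction scheme adapted to the coupled Schr\"odinger--KdV setting, and then to globalize through the conserved quantities. First I would recast \eqref{3LSI} in Duhamel form, schematically
\begin{align*}
u_j(t) &= S(t)u_{j_0} + i\int_0^t S(t-t')\bigl(\gamma_j|u_j|^{q_j}u_j+\alpha_j u_j v\bigr)(t')\,dt',\\
v(t) &= V(t)v_0 - \int_0^t V(t-t')\Bigl(\beta\, v^p\partial_x v+\tfrac12\partial_x\bigl(\alpha_1|u_1|^2+\alpha_2|u_2|^2\bigr)\Bigr)(t')\,dt',
\end{align*}
and seek a fixed point of the resulting map $\Phi=(\Phi_1,\Phi_2,\Phi_3)$ in a closed ball of a space $X_T$ whose norm sums, componentwise, the energy norm $\|\cdot\|_{L^\infty_T H^1}$, the Kato local--smoothing norms (gaining one derivative for the KdV field, half a derivative for the Schr\"odinger fields), the maximal--function norm $\|\cdot\|_{L^2_xL^\infty_T}$, and the admissible Strichartz norms. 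The hypothesis that $n_2$ is odd enters precisely here: it makes $v\mapsto v^{p}=v^{n_1/n_2}$ a well-defined, continuous real function of the real-valued field $v$ through the odd $n_2$-th root, so that $v^p\partial_x v$ is meaningful for data of arbitrary sign.

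Next I would assemble the linear estimates. The key input is the sharp KdV smoothing $\|\partial_x V(t)f\|_{L^\infty_xL^2_t}\lesssim\|f\|_{L^2}$ and its retarded analogue $\bigl\|\partial_x\int_0^t V(t-t')F\,dt'\bigr\|_{L^\infty_xL^2_T}\lesssim\|F\|_{L^1_xL^2_T}$, which let me absorb the single $x$-derivative present in both $v^p\partial_x v$ and $\partial_x(|u_j|^2)$ without net loss of regularity; together with the half-derivative Schr\"odinger smoothing $\|D_x^{1/2}S(t)f\|_{L^\infty_xL^2_t}\lesssim\|f\|_{L^2}$ and the finite-time maximal-function bounds $\|S(t)f\|_{L^2_xL^\infty_T}+\|V(t)f\|_{L^2_xL^\infty_T}\lesssim(1+T)^{\theta}\|f\|_{H^s}$ for $s$ slightly above $\tfrac12$. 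The factor $(1+T)^\theta$ carried by the \emph{local} maximal-function estimates, combined with the time integration in the Duhamel terms, is exactly what produces the growth rate $(1+T)^{\frac34+}$ asserted in \eqref{Thm1-cond-b}; tracking the exponents through one iteration of $\Phi$ pins down the power $\tfrac34$.

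For the nonlinear step I would control the fractional powers via the pointwise inequality $\bigl||a|^{q}a-|b|^{q}b\bigr|\lesssim(|a|^{q}+|b|^{q})|a-b|$ and the Kato--Ponce fractional Leibniz/chain rules, keeping the low-order factors in the maximal-function and Strichartz norms and assigning the (half-)derivative to the smoothing norm. The coupling terms $\alpha_j u_j v$ and $\partial_x(|u_j|^2)$ are bilinear and treated the same way, the decisive point being that the derivative on $|u_j|^2$ is paid for by the KdV smoothing while the two copies of $u_j$ are measured in $L^2_xL^\infty_T$ and $L^\infty_xL^2_T$. Choosing the ball radius comparable to the data and $T$ small depending only on $\|u_{1_0}\|_{H^1},\|u_{2_0}\|_{H^1},\|v_0\|_{H^1}$ makes $\Phi$ a contraction, yielding existence, uniqueness in $X_T$, membership \eqref{Thm1-cond-a}, and the bound \eqref{Thm1-cond-b}; the identical multilinear estimates make the data-to-solution map Lipschitz into the contraction metric on each $[0,T']$ with $T'<T$, giving continuous dependence. \emph{The main obstacle} is closing this contraction with the derivative-loss coupling $\partial_x(|u_j|^2)$ and the derivative in $v^p\partial_x v$ present \emph{simultaneously} with the mere Lipschitz regularity of the fractional powers: the one-derivative KdV gain may be spent only once, so the remaining factors must be distributed between the maximal-function and half-derivative Schr\"odinger norms so that no net derivative is lost, and the same norms are the ones carrying the $(1+T)$ growth.

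Finally I would globalize using the three conserved quantities. Mass conservation fixes $\|u_j(t)\|_{L^2}=\|u_{j_0}\|_{L^2}$, and conservation of $H$ in \eqref{con-2} together with $\bigl|\operatorname{Im}\int u_j\overline{\partial_x u_j}\bigr|\le\varepsilon\|\partial_x u_j\|_{L^2}^2+C_\varepsilon\|u_j\|_{L^2}^2$ bounds $\|v(t)\|_{L^2}^2$ by a small multiple of $\sum_j\|\partial_x u_j(t)\|_{L^2}^2$ plus a constant. Substituting this into the conserved energy $E$ in \eqref{Edef} and estimating $\int|u_j|^{q_j+2}$, $\int|u_j|^2v$ and $\int v^{p+2}$ by Gagliardo--Nirenberg, the restrictions $0<q_j<4$ when $\tau_j>0$ and $1\le p<\tfrac43$ render every nonlinear contribution strictly subcritical relative to $\sum_j\|\partial_x u_j\|_{L^2}^2+\|\partial_x v\|_{L^2}^2$; absorbing them gives an a priori bound on $\|(u_1,u_2,v)(t)\|_{H^1}$ uniform in $t$, after which iterating the local result on consecutive intervals of uniform length reaches any prescribed $T$. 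The subtlety here is that the KdV field possesses no independent $L^2$ conservation, so $\|v\|_{L^2}$ must be controlled through $H$, coupling it to the Schr\"odinger gradients and thereby forcing the stronger threshold $p<\tfrac43$ in place of the naive gKdV value $p<4$.
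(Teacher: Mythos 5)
Your overall architecture coincides with the paper's: Duhamel formulation, contraction in a space built from the energy norm $\|J^1_x\cdot\|_{L^\infty_TL^2_x}$ and the time-weighted maximal-function norm $(1+T)^{-\mu}\|\cdot\|_{L^2_xL^\infty_T}$, the dual Kato smoothing estimate $\bigl\|\partial_x\int_0^tV(t-t')g\,dt'\bigr\|_{L^\infty_TL^2_x}\lesssim\|g\|_{L^1_xL^2_T}$ absorbing the single derivative in $v^p\partial_xv$ and $\partial_x(|u_j|^2)$, and the correct provenance of the $(1+T)^{\frac34+}$ growth. Your globalization is also exactly the paper's, including the key diagnosis that the absence of an independent $L^2$ conservation for $v$ forces $\|v\|_{L^2}$ to be recovered from $H$, coupling it to the Schr\"odinger gradients and producing $p<\tfrac43$ via the Young-inequality step \eqref{con-4aa}. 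Two incidental corrections: the maximal-function estimate for $V(t)$ requires $s>3/4$, not $s$ slightly above $1/2$ (that threshold is for $S(t)$); and at $H^1$ regularity neither half-derivative Schr\"odinger smoothing nor Strichartz norms are needed — the Schr\"odinger components close with the retarded $L^1_TL^2_x$ estimate, the algebra property of $H^1$, and Gagliardo--Nirenberg alone.

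There is, however, one genuine gap, located precisely at the point you flag as ``the main obstacle'' but then leave to an unspecified distribution of derivatives: the bound for the maximal-function norm $\|\Psi(\Delta)\|_{L^2_xL^\infty_T}$ of the KdV Duhamel term. The only retarded maximal estimate your toolkit yields (Minkowski plus the group bound) is $\bigl\|\int_0^tV(t-t')F\,dt'\bigr\|_{L^2_xL^\infty_T}\lesssim(1+T)^{\mu}\|J^s_xF\|_{L^1_TL^2_x}$ with $s>3/4$; applied to $F=\partial_x(|u_j|^2)$ or $v^p\partial_xv$ it demands $\tfrac74+$ derivatives of the unknowns, which $H^1$ data cannot supply, and the one-derivative Kato gain cannot be spent here because it produces an $L^\infty_TL^2_x$ bound, not an $L^2_xL^\infty_T$ one. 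The paper resolves this with a device absent from your sketch: split the source by the projections $P_{\boldsymbol{l}},P_{\boldsymbol{h}}$ of \eqref{low-high-operators} — low frequencies cost nothing since $J^1_xP_{\boldsymbol{l}}$ is bounded on $L^2$ — and for the high frequencies prove, via the Christ--Kiselev lemma (Proposition \ref{Prop-Christ-Kiselev}) applied to the pair of group estimates \eqref{L-MaxF-a}--\eqref{L-MaxF-b}, the localized retarded estimate $\bigl\|\int_0^tV(t-t')P_{\boldsymbol{h}}g\,dt'\bigr\|_{L^2_xL^\infty_T}\lesssim(1+T)^{\mu}\|P_{\boldsymbol{h}}g\|_{L^1_xL^2_T}$ of Lemma \ref{localized-maximal-function-KdV}, whose right-hand side is then controlled by exactly the bilinear pairing $\|u_j\|_{L^2_xL^\infty_T}\|\partial_xu_j\|_{L^2_xL^2_T}$ you propose. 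Without this lemma (or an equivalent substitute) your contraction does not close: the norm must contain $\|v\|_{L^2_xL^\infty_T}$, both because it is asserted in \eqref{Thm1-cond-b} and because it is indispensable for estimating $\|v^p\partial_xv\|_{L^1_xL^2_T}$ when $p$ is fractional.
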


\begin{thm}\label{Thm2}
	Consider system \eqref{3LSINKK} with $q>0$ and $1\le p_j=\frac{{n_j}_1}{{n_j}_2}\in \mathbb{Q}^+$, ${n_j}_2$ odd, for $j=1,2$. For any given data $(u, {v_1}_0, {v_2}_0) \in H^1\times H^1 \times H^1$ there is a positive time $T=T(\|u_0\|_{H^1}, \|{v_1}_0\|_{H^1}, \|{v_2}_0\|_{H^1})$ and a unique solution
	$(u, v_1, v_2)$ to the IVP associated to \eqref{3LSINKK} such that
	\begin{align}
	&(u, v_1, v_2)\in \mathcal{C}([0, T];\, H^1\times H^1\times H^1)\label{Thm2-cond-a}\\
	&\|u\|_{L^2_xL^{\infty}_T} + \|v_1\|_{L^2_xL^{\infty}_T} +  \|v_2\|_{L^2_xL^{\infty}_T}\lesssim (1+T)^{\frac3{4}+}.\label{Thm2-cond-b}
	\end{align}
	Moreover, for any $T' < T$ there exists a neighborhood $\mathcal{V}$  of $(u_0, {v_1}_0, {v_2}_0)$ in $ H^1\times H^1 \times H^1$ such that the map
	$$(\tilde{u}_0, \tilde{v_1}_0, \tilde{v_2}_0)\longmapsto (\tilde{u}, \tilde{v}_1, \tilde{v}_2)$$
	from  $\mathcal{V}$ into the class defined by \eqref{Thm2-cond-a}-\eqref{Thm2-cond-b} with $T'$ instead of $T$ is continuous.  Also, for $1\leq p_j<\frac43$, $j=1,2$,
	and $q$ verifying
	\begin{equation}
	0< q <
	\begin{cases}
	\;4     & \text{if}\quad a> 0,\\
	\infty  &\text{if}\quad a\le 0,
	\end{cases}	
	\end{equation}
	 the local solutions can be extended to any time interval $[0, T]$ with $T$ arbitrary large.
\end{thm}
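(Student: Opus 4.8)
The plan is to mirror the argument used for Theorem~\ref{Thm1}, adapting it to the $(1+2)$-component structure of \eqref{3LSINKK}, in which the roles of the single Schr\"{o}dinger field $u$ and the two gKdV fields $v_1,v_2$ are reversed. For the local theory I would first recast \eqref{3LSINKK} in its Duhamel (integral) form,
\begin{align}
u(t)&=S(t)u_0+i\int_0^t S(t-t')\big[\gamma|u|^q u+\alpha_1 uv_1+\alpha_2 uv_2\big](t')\,dt',\nonumber\\
v_j(t)&=V(t){v_j}_0-\int_0^t V(t-t')\Big[\tfrac{\beta_j}{p_j+1}\partial_x\big(v_j^{p_j+1}\big)+\tfrac{\alpha_j}{2}\partial_x(|u|^2)\Big](t')\,dt',\quad j=1,2,\nonumber
\end{align}
and set up a contraction in a ball of a resolution space whose norm combines $\mathcal{C}([0,T];H^1)$ with the auxiliary smoothing and maximal-function norms appearing in \eqref{Thm2-cond-b} (in particular the $L^2_xL^\infty_T$ norm together with a Kato-smoothing norm controlling one extra spatial derivative of the KdV pieces). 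The hypothesis that each $p_j=n_{j_1}/n_{j_2}$ with $n_{j_2}$ odd is used precisely so that $s\mapsto s^{p_j+1}$ is a well-defined, locally Lipschitz function on all of $\mathbb{R}$, which legitimizes the nonlinearity $v_j^{p_j}\partial_x v_j=\tfrac{1}{p_j+1}\partial_x(v_j^{p_j+1})$ for the real-valued, possibly sign-changing, fields $v_j$.

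The analytic heart of the local step is the family of linear and multilinear estimates. For the propagators $S(t)=e^{it\partial_x^2}$ and $V(t)=e^{-t\partial_x^3}$ I would invoke the Kenig--Ponce--Vega smoothing estimates (the Kato local smoothing gaining one derivative for $V(t)$), the maximal function estimates controlling $\|\cdot\|_{L^2_xL^\infty_T}$, and the relevant Strichartz bounds, then combine them with H\"{o}lder and fractional Leibniz inequalities to control the nonlinearities: the derivative in the gKdV self-interaction $\partial_x(v_j^{p_j+1})$ and the derivative in the coupling $\partial_x(|u|^2)$ are both absorbed by the one-derivative gain of the Kato smoothing for $V(t)$, while $\gamma|u|^q u$ and the couplings $uv_j$ are handled through the maximal-function norms. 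Choosing $T$ small in terms of the $H^1$-sizes of the data closes the contraction; uniqueness and the continuous dependence of the solution map on $\mathcal{V}$ then follow from the same estimates applied to differences of two solutions, exactly as for Theorem~\ref{Thm1}.

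For the global statement the goal is an a priori bound on $\|u(t)\|_{H^1}+\|v_1(t)\|_{H^1}+\|v_2(t)\|_{H^1}$ permitting the local solution to be iterated on a fixed interval. The conserved mass $Q(u)$ in \eqref{con-2NKK} controls $\|u(t)\|_{L^2}$; the conserved quantity $G(U)$ in \eqref{con-1NKK} then yields $\|v_1\|_{L^2}^2+\|v_2\|_{L^2}^2\le G(U)+2\|u\|_{L^2}\|\partial_x u\|_{L^2}\lesssim 1+\|\partial_x u\|_{L^2}$, the substitute for the $L^2$-conservation that the forced gKdV equations lack. Feeding this into the conserved energy $K(U)$ of \eqref{EdefNKK}, I would estimate each potential term by Gagliardo--Nirenberg. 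Since $|u|^{q+2}\ge 0$, the sign of $a$ cleanly decides the Schr\"{o}dinger term $a\int|u|^{q+2}\lesssim\|\partial_x u\|_{L^2}^{q/2}$: it is absorbable into $\|\partial_x u\|_{L^2}^2$ exactly when $q<4$, and it carries the favorable sign when $a\le 0$. For the gKdV self-interaction the quantity $v_j^{p_j+2}$ is not sign-definite, so I would bound $|b_j|\int|v_j|^{p_j+2}\lesssim\|\partial_x v_j\|_{L^2}^{p_j/2}\|v_j\|_{L^2}^{(p_j+4)/2}$ uniformly; inserting $\|v_j\|_{L^2}\lesssim 1+\|\partial_x u\|_{L^2}^{1/2}$ produces a term of the form $\|\partial_x v_j\|_{L^2}^{p_j/2}\|\partial_x u\|_{L^2}^{(p_j+4)/4}$, of total homogeneity $(3p_j+4)/4$, which is strictly subquadratic precisely when $3p_j<4$. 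This is the origin of the restriction $p_j<4/3$, and it is what lets a weighted Young inequality absorb the term into $\|\partial_x v_j\|_{L^2}^2+\|\partial_x u\|_{L^2}^2$ up to an additive constant; the mixed coupling $\alpha_j\int|u|^2 v_j$ is lower order and absorbed in the same way.

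The main obstacle, and the feature distinguishing this system from the classical NLS--KdV theory, is the absence of an independent $L^2$-conservation law for the long-wave fields $v_1,v_2$: each gKdV equation is forced by $\partial_x(|u|^2)$, so only the combination $G(U)$ is conserved and $\|v_j\|_{L^2}$ must be recovered through the weak bound $\|v_j\|_{L^2}\lesssim 1+\|\partial_x u\|_{L^2}^{1/2}$. Closing the energy estimate using only $Q$ and $G$ is exactly what forces the subcritical window $p_j<4/3$ rather than the naive $p_j<4$ that Gagliardo--Nirenberg alone would suggest. Once the a priori $H^1$ bound is in hand, a standard continuation argument upgrades the local solution to a global one on any interval $[0,T]$.
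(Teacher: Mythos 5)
Your proposal is correct and follows essentially the same route as the paper: the paper proves Theorem~\ref{Thm2} by repeating the contraction argument of Theorem~\ref{Thm1} in the space built from the $L^\infty_TL^2_x$, $L^2_xL^\infty_T$ and Kato-smoothing norms, and globalizes via $Q$, $G$, $K$ with exactly the Gagliardo--Nirenberg/Young bookkeeping you describe, the restriction $p_j<4/3$ arising from substituting the $G$-derived bound $\|v_j\|_{L^2}^2\lesssim 1+\|\partial_xu\|_{L^2}$ into the $v_j^{p_j+2}$ term. Your homogeneity count $(3p_j+4)/4<2$ is the same condition the paper obtains in the form $2\tfrac{4-p}{4+p}>1$.
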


As discussed in the introduction, for the special cases  $q_1=q_2=2p=2$ for system \eqref{3LSI} and $2p_1=2p_2=q=2$ for system \eqref{3LSINKK} we prove the following more general local well-posedness results using contraction mapping principle in the framework of Bourgain's  spaces. More precisely, in these cases we have the following local well-posedness theorems.

\begin{thm}\label{Thm1.1}
Consider system \eqref{3LSI} with $q_1=q_2=2p=2$. Let  $({u_1}_0, {u_2}_0, v_0)$ belonging to the space $H^{s_1}\times H^{s_2}\times H^{k}$ with  $k > -3/4$ provided:
\begin{itemize}
\item [(a)] $\max\{k-1,\, \kappa/4\}< s_j < \kappa +2$\; if\; $\gamma_j=0$,\; for\;  $j=1,2$,\medskip
\item [(b)] $\max\{k-1,\, k/4\}< s_j < \kappa +2$\; and\; $s_j\ge 0$\; if\; $\gamma_j\neq 0$,\; for\;  $j=1,2$.
\end{itemize}
Then, there exist a time $T(\|{u_1}_0\|_{H^{s_1}}, \|{u_2}_0\|_{H^{s_2}} ,\|v_0\|_{H^{k}})>0$ and a unique solution for the integral equations associated to the  IVP for \eqref{3LSI} in an appropriate Bourgain's space contained in $\mathcal{C}([0, T];\;  H^{s_1}\times H^{s_2}\times H^{k})$.

Moreover, the mapping
$({u_1}_0, {u_2}_0, v_0) \longmapsto (u_1(\cdot, t), u_2(\cdot, t), v(\cdot, t))$ is locally Lipschitz.
\end{thm}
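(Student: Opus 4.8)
The plan is to solve the Duhamel (integral) formulation of the IVP by the contraction mapping principle in a product of Bourgain spaces adapted to the two dispersion relations $\tau+\xi^2$ and $\tau-\xi^3$. With $q_1=q_2=2p=2$ the system \eqref{3LSI} reduces to two cubic NLS equations for $u_1,u_2$, each coupled to a single KdV--Burgers equation for $v$ through the quadratic interactions $u_jv$ and $\partial_x(|u_j|^2)$, the two Schr\"odinger fields interacting only through the common long wave $v$. First I would fix $b$ slightly larger than $1/2$, set
\[
\mathcal{B}=\big\{(u_1,u_2,v)\in X^{s_1,b}\times X^{s_2,b}\times Y^{k,b}:\ \|u_1\|_{X^{s_1,b}}+\|u_2\|_{X^{s_2,b}}+\|v\|_{Y^{k,b}}\le R\big\},
\]
and define the map $\Phi=(\Phi_1,\Phi_2,\Phi_3)$ by
\[
\Phi_j(u_1,u_2,v)(t)=\psi_T(t)S(t){u_j}_0+i\,\psi_T(t)\!\int_0^t\! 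S(t-t')\big[\gamma_j|u_j|^2u_j+\alpha_j u_j v\big](t')\,dt',\quad j=1,2,
\]
\[
\Phi_3(u_1,u_2,v)(t)=\psi_T(t)V(t)v_0-\psi_T(t)\!\int_0^t\! V(t-t')\Big[\beta\, v\,\partial_x v+\tfrac12\partial_x(\alpha_1|u_1|^2+\alpha_2|u_2|^2)\Big](t')\,dt'.
\]
The time cut-off $\psi_T$ localizes the problem and, through the classical linear estimates in $X^{s,b}$ and $Y^{k,b}$, supplies a gain of a positive power $T^{\theta}$ in the inhomogeneous terms.

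Next I would collect the building-block estimates. The homogeneous and Duhamel estimates for $S(t)$ and $V(t)$ in $X^{s,b}$ and $Y^{k,b}$ are standard, and reduce the contraction to four multilinear bounds:
\[
\||u_j|^2u_j\|_{X^{s_j,b-1}}\lesssim\|u_j\|_{X^{s_j,b}}^3\quad(s_j\ge0),\qquad \|u_jv\|_{X^{s_j,b-1}}\lesssim\|u_j\|_{X^{s_j,b}}\|v\|_{Y^{k,b}},
\]
\[
\|\partial_x(|u_j|^2)\|_{Y^{k,b-1}}\lesssim\|u_j\|_{X^{s_j,b}}^2,\qquad \|\partial_x(v^2)\|_{Y^{k,b-1}}\lesssim\|v\|_{Y^{k,b}}^2.
\]
The last is the sharp KdV bilinear estimate of Kenig--Ponce--Vega, valid precisely for $k>-3/4$. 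The cubic estimate is the standard trilinear bound for the one-dimensional cubic NLS, which requires $s_j\ge0$ and explains the extra restriction in case (b) where $\gamma_j\neq0$; in case (a) the cubic term is absent and $s_j$ is allowed to drop below zero. The two remaining estimates are exactly the Schr\"odinger--KdV interaction bounds, for which I would invoke the sharp estimates of Wu \cite{WU} (sharpening \cite{CL}); since the two NLS fields couple only to $v$, these two-component bounds apply verbatim to each pair $(u_j,v)$, and their joint range of validity is encoded by the hypotheses $k>-3/4$ and $\max\{k-1,k/4\}<s_j<k+2$.

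With these in hand the remaining steps are routine. Combining the linear and multilinear estimates shows that $\Phi$ maps $\mathcal{B}$ into itself and is a contraction once $T=T(\|{u_1}_0\|_{H^{s_1}},\|{u_2}_0\|_{H^{s_2}},\|v_0\|_{H^k})$ is small, the factor $T^{\theta}$ absorbing the data-dependent constants; the unique fixed point is the desired solution. The embeddings $X^{s,b}\hookrightarrow\mathcal{C}(\R;H^s)$ and $Y^{k,b}\hookrightarrow\mathcal{C}(\R;H^k)$ for $b>1/2$ give continuity in time and the inclusion into $\mathcal{C}([0,T];\,H^{s_1}\times H^{s_2}\times H^{k})$, while applying the same multilinear bounds to differences of two solutions yields local Lipschitz dependence of the data-to-solution map.

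The main obstacle is the pair of coupling estimates, in particular $\|\partial_x(|u_j|^2)\|_{Y^{k,b-1}}\lesssim\|u_j\|_{X^{s_j,b}}^2$, where a full derivative falls on a product of two Schr\"odinger factors that must be measured in the KdV space; together with the estimate for $u_jv$ this forces a careful comparison of the resonance functions $\tau+\xi^2$ and $\tau-\xi^3$ and a case analysis according to the relative sizes of the frequencies, and it is precisely here that the admissible range $\max\{k-1,k/4\}<s_j<k+2$ emerges. Since \eqref{3LSI} differs from the classical two-component NLS--KdV system only by the decoupled cubic self-interactions and the duplication of the Schr\"odinger field, no genuinely new multilinear phenomenon arises, and the delicate frequency analysis is exactly the one already established in \cite{WU}.
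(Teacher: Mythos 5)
Your proposal is correct and follows essentially the same route as the paper: a contraction mapping argument in a ball of $X^{s_1,b}\times X^{s_2,b}\times Y^{k,b}$ with $b=\tfrac12+$, using the standard linear estimates, the Kenig--Ponce--Vega bilinear estimate for $\partial_x(v^2)$, the trilinear cubic NLS bound (which is the source of the restriction $s_j\ge 0$ when $\gamma_j\neq 0$), and Wu's mixed bilinear estimates for $u_jv$ and $\partial_x(|u_j|^2)$ applied to each pair $(u_j,v)$ separately. The paper's own proof is exactly this outline (with the same choice of $M_j$, $M$ and the smallness condition on $T^{\epsilon}$), stated even more briefly than yours.
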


\begin{thm}\label{Thm1.2}
Consider system \eqref{3LSINKK} with $2p_1=2p_2=q=2$. Let  $({u}_0, {v_1}_0, {v_2}_0)$ belonging to the space $H^{s}\times H^{k_1}\times H^{k_2}$ with $k_j > -3/4$,\, $j=1,2$, provided:
\begin{itemize}
\item [(a)] $s-2\le k_j < \min\{4s,\;s+1\}$\; if\; $\gamma=0$,\; for\;  $j=1,2$,\medskip
\item [(b)] $s-2\le k_j < \min\{4s,\;s+1\}$\; and\; $s\ge 0$\; if\; $\gamma\neq 0$,\; for\;  $j=1,2$.
\end{itemize}
Then, there exist a positive time $T(\|{u}_0\|_{H^s}, \|{v_1}_0\|_{H^{k_1}} ,\|{v_2}_0\|_{H^{k_2}})$ and a unique solution for the integral equations associated to the  IVP for \eqref{3LSINKK} in an appropriate Bourgain's space contained in $\mathcal{C}([0, T];\;  H^{s}\times H^{k_1}\times H^{k_2})$.\\
 Moreover, the mapping
$({u}_0, {v_1}_0, {v_2}_0) \longmapsto (u(\cdot, t), v_1(\cdot, t), v_2(\cdot, t))$ is locally Lipschitz.
\end{thm}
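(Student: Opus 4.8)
The plan is to pass to the Duhamel (integral) formulation and to solve the resulting system by the contraction mapping principle in a product of the Bourgain spaces introduced in \eqref{xsb-norm}--\eqref{ykb-norm}. Inserting the special exponents ($|u|^qu=|u|^2u$ and $v_j^{p_j}\partial_x v_j=\tfrac12\partial_x(v_j^2)$) and using the linear propagators $S(t),V(t)$ of \eqref{lin-prop}, the system \eqref{3LSINKK} is equivalent to
\begin{align*}
u(t) &= S(t)u_0 + i\int_0^t S(t-t')\bigl[\gamma|u|^2u + \alpha_1 u v_1 + \alpha_2 u v_2\bigr](t')\,dt',\\
v_j(t) &= V(t)\,v_{j,0} - \int_0^t V(t-t')\bigl[\tfrac{\beta_j}{2}\,\partial_x(v_j^2) + \tfrac{\alpha_j}{2}\,\partial_x(|u|^2)\bigr](t')\,dt',\qquad j=1,2.
\end{align*}
I would first truncate in time with the cutoff $\psi_T$ of \eqref{cut-off}, so that the fixed-point map is defined on the global spaces $X^{s,b}\times Y^{k_1,b}\times Y^{k_2,b}$ for some $b>1/2$, while the contraction factor is extracted as a positive power of $T$.

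The argument rests on two families of estimates. The \emph{linear} estimates are the standard ones for Bourgain spaces: the homogeneous bounds $\|\psi_T S(t)u_0\|_{X^{s,b}}\lesssim\|u_0\|_{H^s}$ and $\|\psi_T V(t)v_{j,0}\|_{Y^{k_j,b}}\lesssim\|v_{j,0}\|_{H^{k_j}}$, together with the inhomogeneous (Duhamel) bounds which, for $0<b'<\tfrac12<b$ with $b+b'\le1$, gain a factor $T^{1-b-b'}>0$ when the forcing is measured in the dual spaces $X^{s,-b'}$ and $Y^{k_j,-b'}$. The \emph{nonlinear} estimates are the heart of the proof and are precisely the multilinear estimates established by Wu \cite{WU} for the single NLS--KdV coupling: the cubic Schr\"odinger bound $\||u|^2u\|_{X^{s,-b'}}\lesssim\|u\|_{X^{s,b}}^3$, the quadratic KdV bound $\|\partial_x(v_j^2)\|_{Y^{k_j,-b'}}\lesssim\|v_j\|_{Y^{k_j,b}}^2$ valid for $k_j>-3/4$, and the two mixed coupling estimates
\begin{equation*}
\|u\,v_j\|_{X^{s,-b'}}\lesssim\|u\|_{X^{s,b}}\,\|v_j\|_{Y^{k_j,b}},\qquad \|\partial_x(|u|^2)\|_{Y^{k_j,-b'}}\lesssim\|u\|_{X^{s,b}}^2,
\end{equation*}
whose validity is exactly what dictates the admissible ranges $s-2\le k_j<\min\{4s,s+1\}$ in the statement, the extra requirement $s\ge0$ when $\gamma\neq0$ being needed only to close the cubic self-interaction.

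The main point --- and the reason the three-equation system is only a modest extension of the two-equation case --- is that the two long-wave fields $v_1,v_2$ are not coupled to each other directly: in \eqref{3LSINKK} they interact only through the common short wave $u$. Consequently no genuinely new resonance analysis is required, and the delicate interaction between the two dispersion relations $\tau+\xi^2$ and $\tau-\xi^3$ enters only through the mixed estimates above, which I would apply separately to each pair $(u,v_1)$ and $(u,v_2)$. The real work is therefore to verify that the hypotheses on $(s,k_1)$ and on $(s,k_2)$ each fall within the range for which Wu's estimates hold; this is where I expect the bookkeeping to be most delicate, since the two coupling terms $\alpha_1 uv_1$ and $\alpha_2 uv_2$ feed back into the \emph{same} Schr\"odinger equation and their contributions must be summed without loss. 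Once all the estimates are in hand the standard argument closes: on a ball whose radius is comparable to the size of the data the map is a contraction for $T$ small depending on $\|u_0\|_{H^s},\|v_{1,0}\|_{H^{k_1}},\|v_{2,0}\|_{H^{k_2}}$, its unique fixed point is the desired solution in the truncated Bourgain space (which embeds into $\mathcal{C}([0,T];H^s\times H^{k_1}\times H^{k_2})$ since $b>1/2$), and applying the same multilinear bounds to differences of solutions yields the local Lipschitz dependence on the data.
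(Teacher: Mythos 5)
Your proposal follows essentially the same route as the paper: Duhamel formulation with the cutoff $\psi_T$, a contraction in $X^{s,b}\times Y^{k_1,b}\times Y^{k_2,b}$ with $b=\tfrac12+\epsilon$, the gain of $T^{\epsilon}$ from the inhomogeneous linear estimate, and the bilinear/trilinear/mixed estimates of Lemmas~\ref{lem-0}--\ref{lem-3} (the paper attributes the cubic bound to \cite{BOP} and the KdV bilinear bound to \cite{KPV} rather than to \cite{WU}, but the content is the same). The structural observation that $v_1$ and $v_2$ couple only through $u$, so each pair $(s,k_j)$ is checked separately against the ranges of the mixed estimates, matches the paper's (very brief) argument.
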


\subsection{Existence and stability results}
To state our existence theorem, let us
denote by $\mathcal{O}_{r,l,m}$ the set of all
normalized solutions $(\phi_1,\phi_2,w)$ of \eqref{ODE} satisfying the condition \eqref{norcon2}.
We assume that the following conditions hold:
\begin{equation}\label{assumptions}
\begin{cases}
\gamma_1, \gamma_2>0,\ \beta>0,\ \alpha_1, \alpha_2>0,\\
0< q_1, q_2<4,\ \textrm{and} \ p=\frac{n_1}{n_2}\in \mathbb{Q}^{+},\ n_2\ \textrm{odd}.
\end{cases}
\end{equation}
The following theorem guarantees that the set $\mathcal{O}_{r,l,m}$ is non-empty.

\begin{thm}\label{SOexistence}
Suppose the assumptions \eqref{assumptions} hold and that $0< p<4.$
Then for every $(r,l,m)\in \mathbb{R}_{+}^3,$ there exists
a solution
\begin{equation*}
(\sigma_{1r},\sigma_{2l},c_m,\phi_r,\phi_l,w_m)\in \mathbb{R}_{+}^2\times \mathbb{R} \times\h
\end{equation*}
to the system \eqref{ODE}
satisfying the condition
\begin{equation}\label{normSO}
\|\phi_r\|_{L^2}^2=r,\ \|\phi_l\|_{L^2}^2=l,\ \textrm{and}\ \|w_m\|_{L^2}^2=m
\end{equation}
with $\sigma_{1r},\sigma_{2l},$ and $c_m$ being the Lagrange multipliers.
Moreover, $w_m(x)> 0$ for all $x\in \mathbb{R}$ and there exists $(\zeta_j,R_j)\in S^1\times H_{+}^1(\mathbb{R})$
such that
\begin{equation*}
\phi_r(x)=\zeta_1R_1(x)\ \ \textrm{and}\ \ \phi_l(x)=\zeta_2R_2(x), \ \textrm{for all}\ x\in \mathbb{R}.
\end{equation*}
In particular, $(\sigma_{1r},\sigma_{2l},c_m, R_1,R_2,w_m)$ is a real-valued positive solution of \eqref{ODE}.
\end{thm}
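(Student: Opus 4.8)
The plan is to realize the solutions of \eqref{ODE} as constrained minimizers of the energy $E$ over $S_r\times S_l\times K_m$, with $\sigma_1,\sigma_2,c$ emerging as Lagrange multipliers for the three mass constraints. Accordingly, set
\[
I_{r,l,m}=\inf\{E(\phi_1,\phi_2,w):(\phi_1,\phi_2,w)\in S_r\times S_l\times K_m\}.
\]
First I would show $I_{r,l,m}$ is finite and that minimizing sequences are bounded in $\h$. By the one–dimensional Gagliardo--Nirenberg inequality one bounds $\int|\phi_j|^{q_j+2}$ by $Cr^{1+q_j/4}\|\partial_x\phi_j\|_{L^2}^{q_j/2}$ and $\int w^{p+2}$ by $Cm^{1+p/4}\|\partial_x w\|_{L^2}^{p/2}$, while the coupling $\int|\phi_j|^2w$ is controlled by $\|\partial_x\phi_j\|_{L^2}^{1/2}$ (times powers of $r,m$). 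Since $0<q_j<4$ and $0<p<4$, all these kinetic exponents are strictly below $2$, so Young's inequality makes the quadratic gradient part dominate; hence $E$ is coercive, $I_{r,l,m}>-\infty$, and minimizing sequences are $H^1$-bounded. The mass–preserving dilation $\phi\mapsto\lambda^{1/2}\phi(\lambda\,\cdot)$ sends the kinetic energy to order $\lambda^2$ and every negative term to a strictly lower order of $\lambda$, so letting $\lambda\to0^+$ gives $I_{r,l,m}<0$.

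The crux of the argument is the strict subadditivity inequality
\[
I_{r,l,m}<I_{r_1,l_1,m_1}+I_{r_2,l_2,m_2}
\]
for every nontrivial splitting $(r,l,m)=(r_1+r_2,l_1+l_2,m_1+m_2)$, which is exactly what excludes the dichotomy alternative in P.~L.~Lions' concentration--compactness lemma. I would derive it from the scaling sub-homogeneity $I_{\theta r,\theta l,\theta m}<\theta\,I_{r,l,m}$ for $\theta>1$, obtained by testing with the amplitude rescaling $(\sqrt\theta\,\phi_1,\sqrt\theta\,\phi_2,\sqrt\theta\,w)$ on a near-minimizer: each nonlinear term carries a homogeneity exponent strictly greater than one (namely $(q_j+2)/2$, $(p+2)/2$ and $3/2$), so that
\[
E(\sqrt\theta\,\phi_1,\sqrt\theta\,\phi_2,\sqrt\theta\,w)-\theta\,E(\phi_1,\phi_2,w)<0,
\]
the right sign being forced by $I_{r,l,m}<0$. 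This is the main obstacle, because the long-wave term $\int w^{p+2}$ and the coupling $\int|\phi_j|^2w$ are \emph{not} sign-definite; one must therefore first pass to near-minimizers for which these integrals are nonnegative (via the symmetrization described below), and must run the splitting over the three masses simultaneously rather than over a single scalar mass.

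With strict subadditivity in hand, I would apply the concentration--compactness principle to the densities $\rho_n=|\phi_1^n|^2+|\phi_2^n|^2+(w^n)^2$. Vanishing is ruled out because it would force every negative term of $E$ to tend to zero, contradicting $I_{r,l,m}<0$; dichotomy is ruled out by the strict subadditivity. Hence, after translating by suitable $y_n\in\mathbb{R}$, a subsequence of $(\phi_1^n(\cdot+y_n),\phi_2^n(\cdot+y_n),w^n(\cdot+y_n))$ converges strongly in $L^2$ and weakly in $H^1$ to a triple $(\phi_r,\phi_l,w_m)$; tightness of $\rho_n$ shows no mass escapes, so the limit retains the masses $r,l,m$. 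Weak lower semicontinuity of the kinetic energy, together with strong $L^2$-convergence (plus the $H^1$-bound and interpolation) for the subcritical nonlinear and coupling terms, yields $E(\phi_r,\phi_l,w_m)\le I_{r,l,m}$, so the limit is a minimizer. The Lagrange multiplier rule for the three constraints then produces reals $\sigma_{1r},\sigma_{2l},c_m$ for which $(\phi_r,\phi_l,w_m)$ solves \eqref{ODE}.

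Finally, to obtain the structural conclusions I would symmetrize the minimizer. Replacing $\phi_j$ by $|\phi_j|$ does not increase $\int|\partial_x\phi_j|^2$ (the diamagnetic inequality) and leaves the mass and the terms $\int|\phi_j|^{q_j+2}$, $\int|\phi_j|^2w$ unchanged; replacing $w$ by $|w|$ does not increase the kinetic energy and, since $\alpha_j,\tau>0$, only lowers $-\alpha_j\int|\phi_j|^2w-\tau\int w^{p+2}$. Thus a minimizer may be taken with $\phi_r=\zeta_1R_1$, $\phi_l=\zeta_2R_2$, $\zeta_j\in S^1$, $R_j\ge0$, and $w_m\ge0$. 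The elliptic equations then have nonnegative right-hand sides, and since $r,l,m>0$ none of the profiles vanish identically, so the strong maximum principle upgrades $R_j>0$ and $w_m>0$ on all of $\mathbb{R}$, giving $R_j\in H_{+}^1(\mathbb{R})$ and $w_m>0$. Positivity of the multipliers follows from decay: since $R_j>0$ lies in $L^2$ and $w_m\to0$ at infinity, the asymptotic equation $-R_j''+\sigma_{jr}R_j\approx0$ admits an $L^2$ solution only when $\sigma_{jr}>0$, whence $(\sigma_{1r},\sigma_{2l})\in\mathbb{R}_{+}^2$. This establishes all assertions of the theorem.
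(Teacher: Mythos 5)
Your overall architecture coincides with the paper's: minimize $E$ over $S_r\times S_l\times K_m$, establish coercivity and $\Theta(r,l,m)<0$ by Gagliardo--Nirenberg and mass-preserving dilation, run concentration--compactness on $\rho_n=|\phi_1^n|^2+|\phi_2^n|^2+(w^n)^2$, rule out vanishing via $\Theta<0$, and recover the structure of the minimizer by symmetrization and the Euler--Lagrange system. However, there is a genuine gap at the step the paper itself identifies as the crux: strict subadditivity. Your proposed derivation rests on the amplitude rescaling $(\phi_1,\phi_2,w)\mapsto(\sqrt\theta\,\phi_1,\sqrt\theta\,\phi_2,\sqrt\theta\,w)$, which multiplies \emph{all three} masses by the same factor $\theta$ and therefore only yields $\Theta(\theta r,\theta l,\theta m)<\theta\,\Theta(r,l,m)$, i.e.\ strict subadditivity along \emph{proportional} splittings $(r_i,l_i,m_i)=\theta_i(r,l,m)$. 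In the dichotomy alternative the two escaping bumps may carry the three masses in arbitrary, non-proportional ratios --- e.g.\ $(r,l,m)=(r,l,0)+(0,0,m)$ or $(r,0,0)+(0,l,m)$ --- and the homogeneity argument says nothing about $\Theta(r,l,0)+\Theta(0,0,m)$ versus $\Theta(r,l,m)$. You acknowledge that one "must run the splitting over the three masses simultaneously," but the scaling mechanism you invoke cannot do this, and no substitute is supplied.

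The paper closes exactly this gap by a different and more elaborate device: it first builds special minimizing sequences of nonnegative, even, symmetric-decreasing $C_c^\infty$ profiles (Lemma~\ref{specialmin}), glues two such sequences with disjoint supports and rearranges, and invokes a quantitative rearrangement inequality (Lemma~2.10 of \cite{[AB11]}) giving a definite kinetic-energy drop of $\tfrac34\min\{\|f'\|_{L^2}^2,\|g'\|_{L^2}^2\}$. To make that drop strictly positive it needs the uniform lower bounds $\|\partial_x f_{jn}\|_{L^2}\ge\delta$, $\|\partial_x g_n\|_{L^2}\ge\delta$ of Lemma~\ref{Seqprop}, and the degenerate splittings (cases (ii)--(v) of Lemma~\ref{subadd}) are handled separately by testing against explicit scalar ground states ($w_p$, $\psi_{q_j}$, and the NLS--KdV minimizer of \cite{[AB11]}) and exploiting the strict positivity of the coupling integrals $\int|\phi_j|^2w\,dx$. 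Without some version of this machinery your exclusion of dichotomy does not go through. As a secondary, more minor point, your argument for $\sigma_{1r},\sigma_{2l}>0$ via the asymptotic ODE is heuristic; the paper instead proves the quantitative bound $E_j(f_{jn})-F_j(f_{jn},g_n)\le-\delta_j$ along minimizing sequences and reads off $\sigma_j>0$ by pairing the $j$-th equation with $\overline{\phi_j}$, and it obtains strict positivity of the profiles from the convolution representation $\phi_j=P_{\sigma_j}\star(\gamma_j|\phi_j|^{q_j}\phi_j+\alpha_j\phi_jw)$ with a positive kernel rather than from a maximum principle.
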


\smallskip

\noindent Our approach to study the stability of solitary waves is purely variational.
For any $(r,l,m)\in \mathbb{R}_{+}^2\times \mathbb{R},$ we consider a new variational formulation of solitary waves, namely
the problem of minimizing the functional
$E(h_1,h_2,g)$ over the set
\begin{equation}\label{Pidef}
\Pi_{r,l,m}=\{\Delta \in \h:\Delta=(h_1,h_2,g), Q(h_1)=r,\ Q(h_2)=l,\ \textrm{and}\ H(\Delta)=m\}.
\end{equation}
The family of minimization problems
\begin{equation}\label{Vardef}
\Lambda(r,l,m)=\inf\{E(h_1,h_2,g): (h_1,h_2,g)\in \Pi_{r,l,m} \}
\end{equation}
is suitable for studying the stability properties of travelling solitary waves because both $E$ and the constraint
functionals $Q(h_j)$ and $H(h_1,h_2,g)$ are invariants of motion of
\eqref{3LSI}. For such a variational problem,
an easy consequence of application of the concentration compactness
argument \cite{[L1], CLi} is that the set of global minimizers forms a stable set for the associated initial-value problem, in
that a solution which is initially close to this set will remain close to it for later times.

\smallskip

\noindent The next result concerns the existence of solutions to the variational problem \eqref{Vardef} and their
relation with those in $\mathcal{O}_{r,l,m}.$

\begin{thm}\label{P2thm}
Suppose the assumptions \eqref{assumptions} hold and that $1\leq p<4/3.$ Then

\smallskip

\noindent (i) every minimizing
sequence $\{(h_{1n},h_{2n},g_n)\}_{n\geq 1}$ for $\Lambda(r,l,m)$ enjoys the following compactness property:
there exists a subsequence $\{(h_{1n_k},h_{2n_k},g_{n_k})\}_{k\geq 1},$ a family
$(y_k) \subset \mathbb{R},$ and a
function $(\Phi_1,\Phi_2,w)\in \h$ such that the translated subsequence
\begin{equation*}
\{(T_{y_k}h_{1n_k}, T_{y_k}h_{2n_k}, T_{y_k}g_{n_k})\}_{k\geq 1}
\end{equation*}
converges strongly to $(\Phi_1, \Phi_2,w)$ in $\h.$ The function $(\Phi_1, \Phi_2,w)$ achieves the minimum,
\begin{equation*}
(\Phi_1, \Phi_2,w)\in \Pi_{r,l,m}\ \ \textrm{and}\ \ E(\Phi_1, \Phi_2,w)=\Lambda(r,l,m).
\end{equation*}

\smallskip

\noindent (ii) if $(\Phi_1,\Phi_2,w)$ is
a solution of \eqref{Vardef} and $\|w\|_{L^2}^2=N,$ then there
exists $(\zeta_j,\phi_j)\in S^1\times H_{+}^1(\mathbb{R})$ such
that $(\phi_1,\phi_2,w)\in \mathcal{O}_{r,l,N}$ and
\begin{equation*}
\Phi_1(x)=\zeta_1 e^{-ib_{r,l,m}(N)x}\phi_1(x)\ \ \textrm{and}\ \ \Phi_2(x)=\zeta_2 e^{-ib_{r,l,m}(N)x}\phi_2(x),\ x\in \mathbb{R},
\end{equation*}
where $b_{r,l,m}(A)$ is defined by
\begin{equation}\label{bdef}
b_{r,l,m}(A)=\frac{m-A}{2r+2l}\ \textrm{for any}\ A\geq 0.
\end{equation}
Furthermore, if $\gamma_1=\gamma_2=0,$
then the function $w$ can be chosen to be strictly positive on $\mathbb{R}.$
\end{thm}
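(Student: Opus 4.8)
My plan is to exploit the fact that the constraint $H$ differs from an ordinary $L^2$-mass only through the momentum terms $2\,\textrm{Im}\int h_j\overline{\partial_x h_j}$, and to reduce the non-standard problem $\Lambda(r,l,m)$ to a family of genuine three-mass problems, for which the machinery behind Theorem~\ref{SOexistence} is available. The key device is the linear modulation $h_j=e^{-ibx}\rho_j$. Writing $h_j=\rho_j e^{i\theta_j}$ with $\rho_j=|h_j|$, one has $\|\partial_x h_j\|_{L^2}^2=\|\partial_x\rho_j\|_{L^2}^2+\int\rho_j^2(\partial_x\theta_j)^2\,dx$, while the modulus-dependent terms of $E$ are unchanged; moreover the $j$-th momentum is $P_j=-\int\rho_j^2\partial_x\theta_j\,dx$ and, with $N:=\|g\|_{L^2}^2$, the constraint $H(\Delta)=m$ reads $N+2(P_1+P_2)=m$. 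Two applications of Cauchy--Schwarz, first $\int\rho_j^2(\partial_x\theta_j)^2\ge P_j^2/\|\rho_j\|_{L^2}^2$ and then $P_1^2/r+P_2^2/l\ge (P_1+P_2)^2/(r+l)$, give
\[
E(\Delta)\ \ge\ E(\rho_1,\rho_2,g)+\frac{(m-N)^2}{4(r+l)},\qquad \rho_j=|h_j|,
\]
and testing with modulated real profiles shows the bound is sharp. Hence, letting $I(r,l,N)$ denote the infimum of $E$ over real triples with $\|\rho_1\|_{L^2}^2=r,\ \|\rho_2\|_{L^2}^2=l,\ \|g\|_{L^2}^2=N$ (which coincides with the restriction of the problem in Theorem~\ref{SOexistence} with $m$ replaced by $N$, since an unconstrained phase lowers the kinetic energy only by becoming trivial), I obtain the backbone identity
\[
\Lambda(r,l,m)=\inf_{N\ge 0}\Big\{\,I(r,l,N)+\frac{(m-N)^2}{4(r+l)}\,\Big\}.
\]

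\textbf{Part (i).} First I would establish coercivity. Using $N\le |m|+2\sqrt r\,\|\partial_x h_1\|_{L^2}+2\sqrt l\,\|\partial_x h_2\|_{L^2}$ together with Gagliardo--Nirenberg, all negative terms in $E$ are subquadratic in the gradients; equivalently, a scaling computation shows $I(r,l,N)\sim -N^{(4+p)/(4-p)}$ as $N\to\infty$, so that the exponent is $<2$ exactly when $1\le p<4/3$. This is precisely what makes the quadratic penalty dominate $I$ for large $N$, yielding $\Lambda>-\infty$, $H^1$-boundedness of minimizing sequences, and attainment of the infimum in $N$ at some finite $N^\ast\ge 0$. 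Next, $I$ inherits from the proof of Theorem~\ref{SOexistence} the strict subadditivity that rules out dichotomy, and one checks that $\Lambda$ is itself strictly subadditive in $(r,l,m)$ so that no mass escapes to infinity. Given a minimizing sequence $\{(h_{1n},h_{2n},g_n)\}$, I would extract $N_n\to N^\ast$ and note that both Cauchy--Schwarz deficits above tend to $0$; this forces the phases $\theta_{jn}$ asymptotically onto the common linear slope $-b_{r,l,m}(N^\ast)$ and turns $\{(\rho_{1n},\rho_{2n},g_n)\}$ into a minimizing sequence for $I(r,l,N^\ast)$. Concentration-compactness for the latter (as in Theorem~\ref{SOexistence}) supplies translations $y_k$ and strong $H^1$ limits of the real profiles; reinstating the convergent phases yields strong convergence of the translated subsequence in $\h$ to a minimizer $(\Phi_1,\Phi_2,w)$.

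\textbf{Part (ii).} For a minimizer $(\Phi_1,\Phi_2,w)$ with $N=\|w\|_{L^2}^2$, equality must hold throughout the chain leading to the displayed lower bound. Equality in the first inequality forces $\partial_x\theta_j$ to be constant on the support of $\Phi_j$, i.e.\ $\Phi_j=\zeta_j e^{ic_jx}R_j$ with $R_j=|\Phi_j|\ge 0$ and $\zeta_j\in S^1$; equality in the second forces $c_1=c_2$, and evaluating $H=m$ on $e^{-ib x}R_j$ reproduces $b=b_{r,l,m}(N)$ as in \eqref{bdef}, so $\Phi_j=\zeta_j e^{-ib_{r,l,m}(N)x}R_j$ and $(R_1,R_2,w)$ minimizes $I(r,l,N)$. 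Writing the Euler--Lagrange system with Lagrange multipliers $\lambda_1,\lambda_2$ (for the masses) and $\mu$ (for $H$), the $\mu$-contribution enters the Schr\"odinger equations as a transport term $2i\mu\,\partial_x$, which is exactly annihilated by the modulation $e^{-ib_{r,l,m}(N)x}$ once $\mu=b_{r,l,m}(N)$; the resulting equations for $(R_1,R_2,w)$ are the real system \eqref{ODE} with $\sigma_j=-(\mu^2+\lambda_j)$ and $c$ the multiplier associated with $N$. Thus $(R_1,R_2,w)=(\phi_1,\phi_2,w)\in\mathcal{O}_{r,l,N}$, with $\sigma_j>0$ and the positivity $\phi_j>0$ (and $w>0$ when $\gamma_1=\gamma_2=0$) following from the maximum principle applied to \eqref{ODE}, exactly as in Theorem~\ref{SOexistence}.

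\textbf{Main obstacle.} The delicate point is Part (i): because $H$ is not a nonnegative density and couples the three components through momentum, neither vanishing nor dichotomy can be excluded by a single concentration function. The reduction to $I(r,l,N)$ recasts this as a standard three-mass problem, but the transfer back is the crux---one must show that along a minimizing sequence the phase deficits vanish (so the sequence genuinely concentrates like real profiles) and that $N_n$ does not drift, which rests on the strict subadditivity and scaling properties of $I$ together with the strict convexity in $N$ of the penalty. Verifying strict subadditivity of $\Lambda$ itself, while correctly accounting for how both $N$ and the momentum split under a dichotomous decomposition, is where the mixed nature of the constraint makes the argument substantially harder than in the single-mass setting.
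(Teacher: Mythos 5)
Your proposal is correct and follows essentially the same route as the paper: your backbone identity $\Lambda(r,l,m)=\inf_{N\ge0}\bigl\{I(r,l,N)+(m-N)^2/(4(r+l))\bigr\}$ is exactly the paper's Lemma~\ref{lem4existence} (note $b_{r,l,m}(N)^2(r+l)=(m-N)^2/(4(r+l))$, and $I$ coincides with $\Theta$), with your polar-decomposition/Cauchy--Schwarz derivation being an equivalent repackaging of the paper's modulation $f_j=e^{ik_jx}h_j$. Both arguments then convert a minimizing sequence for $\Lambda$ into one for $\Theta(r,l,N^\ast)$ with $N_n=\|g_n\|_{L^2}^2\to N^\ast$, invoke the compactness and structure results of Theorem~\ref{SOexistence}, and undo the modulation; the additional step you mention (strict subadditivity of $\Lambda$ itself) is not needed once this reduction is in place, and the only detail you leave implicit that the paper treats explicitly is that $N^\ast>0$ when $\gamma_1=\gamma_2=0$, which is what lets $w$ be taken strictly positive.
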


\smallskip

\noindent We use the following
notion of stability.
\begin{definition}\label{defstability}
For $(r,l,m)\in \mathbb{R}_{+}^2\times \mathbb{R},$ let $\mathcal{P}_{r,l,m}$ be the set of
solutions of \eqref{Vardef}.
We say that the set $\mathcal{P}_{r,l,m}$ of solitary-wave profiles is stable if
for all $\epsilon>0,$ there
exists $\delta>0$ such that for any initial datum
$\Delta_0=(h_{01},h_{02},g_0)$ satisfying
\begin{equation*}
\Delta_0 \in \h, \ \ \ \inf\left\{ \|(\Phi_1,\Phi_2,w)-\Delta_0\|_\h:(\Phi_1,\Phi_2,w)\in \mathcal{P}_{r,l,m} \right\}  < \delta,
\end{equation*}
then the solution $\Delta(t,x)=(u_1(t,x),u_2(t,x),v(t,x))$ of \eqref{3LSI} satisfies for all times $t\geq 0,$
\begin{equation*}
\inf\left\{ \|(\Phi_1,\Phi_2,w) -\Delta(t,\cdot)\|_\h: (\Phi_1,\Phi_2,w)\in \mathcal{P}_{r,l,m}\right\} < \epsilon.\
\end{equation*}
\end{definition}

Our stability result reads as follows.

\begin{thm}\label{stabilitytheorem}
Suppose the assumptions \eqref{assumptions} hold and that $1\leq p <4/3.$ Then the following statements hold:

\medskip

\noindent $(i)$ every minimizing sequence $\{(f_{1n},f_{2n},g_n)\}$
for \eqref{Vardef} converges to $\mathcal{P}_{r,l,m}$ in $\h.$

\medskip

\noindent $(ii)$ the set $\mathcal{P}_{r,l,m}$ of minimizers is stable in the sense as in Definition~\ref{defstability}.

\smallskip

\noindent $(iii)$ the set $\mathcal{P}_{r,l,m}$ of minimizers forms a true-three parameter family, that is, if the sets $\mathcal{P}_{r_i,l_i,m_i}$ contain
$(\Phi_1^{(i)},\Phi_2^{(i)},w^{(i)})$ for $(r_1,l_1,m_1)\neq (r_2,l_2,m_2),$ then
\begin{equation*}
(\Phi_1^{(1)},\Phi_2^{(1)},w^{(1)})\neq (\Phi_1^{(2)},\Phi_2^{(2)},w^{(2)}).
\end{equation*}
\end{thm}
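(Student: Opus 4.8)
The plan is to derive all three parts from the compactness of minimizing sequences already furnished by Theorem~\ref{P2thm}(i), combined with the conservation laws of \eqref{3LSI} and the global existence guaranteed by Theorem~\ref{Thm1} (applicable here since \eqref{assumptions} with $1\le p<4/3$ forces $\tau_j>0$ and $0<q_j<4$, so solutions exist for all $t\ge 0$). Throughout write $d(f,G)=\inf_{g\in G}\|f-g\|_\h$. The functionals $E$, $Q$ and $H$ are invariant under the translations $T_a$, so $\mathcal{P}_{r,l,m}$ is translation invariant and $d(T_af,\mathcal{P}_{r,l,m})=d(f,\mathcal{P}_{r,l,m})$.

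For (i), let $\{(f_{1n},f_{2n},g_n)\}$ minimize \eqref{Vardef}. By Theorem~\ref{P2thm}(i) a subsequence, after suitable translations $y_k$, converges strongly in $\h$ to some $(\Phi_1,\Phi_2,w)\in\mathcal{P}_{r,l,m}$. By translation invariance $d(\,(f_{1n_k},f_{2n_k},g_{n_k}),\mathcal{P}_{r,l,m})\to 0$. Since every subsequence of the original minimizing sequence is again minimizing, the usual contradiction argument (if the full sequence did not converge to $\mathcal{P}_{r,l,m}$ one could extract a subsequence bounded away from it, violating the previous line) upgrades this to convergence of the whole sequence, proving (i).

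For (ii) I argue by contradiction along the classical variational stability scheme. If $\mathcal{P}_{r,l,m}$ were unstable there would be $\epsilon_0>0$ and data $\Delta_{0,n}\in\h$ with $d(\Delta_{0,n},\mathcal{P}_{r,l,m})\to 0$ whose global solutions $\Delta_n(t,\cdot)$ of \eqref{3LSI} leave the $\epsilon_0$-neighborhood of $\mathcal{P}_{r,l,m}$; by continuity of $t\mapsto\Delta_n(t,\cdot)$ in $\h$ I take the first exit time $t_n$, so $d(\Delta_n(t_n,\cdot),\mathcal{P}_{r,l,m})=\epsilon_0$. Because $E$ is continuous on $\h$ and every minimizer realizes $E=\Lambda(r,l,m)$ with constraints exactly $(r,l,m)$, and because $E,\,Q(u_1),\,Q(u_2),\,H$ are conserved, closeness of the data gives $E(\Delta_n(t_n,\cdot))=E(\Delta_{0,n})\to\Lambda(r,l,m)$ together with $(Q(u_{1,n}),Q(u_{2,n}),H)(\Delta_n(t_n,\cdot))\to(r,l,m)$.

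The main obstacle is to turn $\{\Delta_n(t_n,\cdot)\}$, whose constraints $(r_n,l_n,m_n)$ merely tend to $(r,l,m)$, into an exact minimizing sequence for $\Lambda(r,l,m)$. I will do so by a small modification: multiplying the two Schr\"odinger components by $\sqrt{r/r_n}$ and $\sqrt{l/l_n}$ restores $Q$, and the momentum constraint $H=m$ is then corrected either by an amplitude rescaling $g\mapsto b_ng$ of the long wave or, when $\|g_n\|_{L^2}$ is too small, by a Galilean phase $u_1\mapsto e^{i\eta_n x}u_1$, which changes the momentum while preserving the mass; in both cases the parameters satisfy $\sqrt{r/r_n},\sqrt{l/l_n},b_n\to 1$ and $\eta_n\to 0$, so the modified triples $\tilde\Delta_n\in\Pi_{r,l,m}$ obey $\|\tilde\Delta_n-\Delta_n(t_n,\cdot)\|_\h\to 0$. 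Continuity of $E$ then yields $E(\tilde\Delta_n)\to\Lambda(r,l,m)$, so $\{\tilde\Delta_n\}$ is minimizing, and part (i) gives $d(\tilde\Delta_n,\mathcal{P}_{r,l,m})\to 0$; hence $d(\Delta_n(t_n,\cdot),\mathcal{P}_{r,l,m})\to 0$, contradicting $d=\epsilon_0$. The delicate inputs here --- boundedness of $\{\Delta_n(t_n,\cdot)\}$ in $\h$ (so the momenta stay controlled), and the continuity of $\Lambda$ in $(r,l,m)$ that makes the rescaling close up --- both follow from the coercivity estimates underlying Theorem~\ref{P2thm}. Finally, (iii) is immediate: any $(\Phi_1,\Phi_2,w)\in\mathcal{P}_{r,l,m}$ satisfies $Q(\Phi_1)=r$, $Q(\Phi_2)=l$ and $H(\Phi_1,\Phi_2,w)=m$, so the triple $(r,l,m)$ is recovered from the minimizer itself; therefore minimizers attached to distinct parameter triples cannot coincide.
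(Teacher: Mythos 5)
Your overall strategy for (i) and (ii) is the same as the paper's: (i) follows from the compactness in Theorem~\ref{P2thm}(i) plus translation invariance, and (ii) is the standard contradiction argument using conservation of $E$, $Q$ and $H$ together with (i). The one place you diverge in (ii) is the renormalization step, and it is worth noting that it is unnecessary here: the paper defines a minimizing sequence for $\Lambda(r,l,m)$ to be one for which $Q(h_{1n})\to r$, $Q(h_{2n})\to l$, $H(\Delta_n)\to m$ and $E(\Delta_n)\to\Lambda(r,l,m)$ \emph{in the limit only}, and Theorem~\ref{P2thm}(i) is stated (and proved, via Lemma~\ref{lem4existence}) for exactly this class. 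So the evolved sequence $\Delta_n(t_n,\cdot)$ is already a minimizing sequence and part (i) applies to it directly --- no rescaling onto $\Pi_{r,l,m}$ is needed. If you do insist on exact constraints, be aware that your Galilean-phase correction $u_1\mapsto e^{i\eta_n x}u_1$ is \emph{not} a small $H^1$ perturbation uniformly over $H^1$-bounded sequences even when $\eta_n\to 0$ (the quantity $\|(e^{i\eta_n x}-1)u_{1,n}\|_{L^2}$ need not vanish when the mass of $u_{1,n}$ spreads to scale $\eta_n^{-1}$), so that branch of your argument would need additional justification; the amplitude rescaling of $g$ also degenerates if $\|g_n\|_{L^2}\to 0$. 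For (iii) you take a genuinely different and in fact simpler route than the paper: since membership in $\Pi_{r,l,m}$ forces $Q(\Phi_1)=r$, $Q(\Phi_2)=l$ and $H(\Phi_1,\Phi_2,w)=m$, equal function triples force equal parameter triples, which proves the literal statement in one line. The paper instead runs a case analysis through the representation $\Phi_j=e^{-i(bx+\theta)}\phi_j$ from Theorem~\ref{P2thm}(ii); that argument yields the extra information of \emph{which} component must differ in each case, but for the statement as written your shortcut is valid.
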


\smallskip

\begin{remark}
Our stability result generalizes and extends analogous results
for (1+1)-component NLS-KdV solitary waves
previously obtained in \cite{[C]}, which
considered the particular case $u_2\equiv 0, \gamma_1=0, p=1,\ \textrm{and}\ \alpha_1=1/6;$
\cite{[AA]}, which studied the case when $u_2\equiv 0, \gamma_1=0, p=1,\ \textrm{and}\ \alpha_1$ in some
neighborhood of $1/6;$ \cite{[AB11]}, which considered the case when $u_2\equiv 0, \gamma_1>0, 1\leq q_1<4, p=1,$ and $\alpha_1>0;$
and of \cite{[AP1]}, which proved a stability result for certain sets of solitary waves in
the special case when $u_2\equiv 0, \gamma_1=0, p=1, \alpha_1>0,$ and the wavespeed $\sigma_1$ is near $c^2/4.$
\end{remark}

\begin{remark}
Implicit in the notion of stability above is the assumption that $(2+1)$-component NLS-gKdV is
 globally well-posed in the energy space $\h.$
Our global well-posedness theory requires the restriction $1\leq p<4/3$.
As far as we know, it remains an open question whether \eqref{3LSI} is well-posed
in $\h$ for $0<p<1.$ If one assumes that the $(2+1)$-component NLS-gKdV is globally well-posed in $\h$ for the range $0< p <4/3,$ then the conclusions of
Theorems~\ref{P2thm} and \ref{stabilitytheorem} continue to hold for these values of $p.$
\end{remark}

\begin{remark}
Although we do not pursue these topics here, similar problems related to (1+2)-component NLS-gKdV system are that of the existence and
stability results concerning nontrivial solitary wave solutions of the form
\begin{equation*}
\left(e^{i\lambda t}e^{i\sigma(x-\sigma t)/2}\phi(x-\sigma t),w_1(x-\sigma t),w_2(x-\sigma t) \right),
\end{equation*}
where the functions $\phi :\mathbb{R}\to \mathbb{C},\ w_1,w_2:\mathbb{R}\to \mathbb{R}$ vanish
at $\pm \infty,$
and the parameters $\lambda$ and $\sigma$ are real.
As in the (2+1)-component case, one can study the existence and stability questions of (1+2)-component NLS-gKdV solitary waves via
their variational characterizations.
To find a true two-parameter family of travelling solitary wave solutions (parameterized by $\sigma$ and $\lambda$),
one considers
the two-parameter variational problem
\begin{equation}\tag{P2}
\inf\left\{K(U): U=(f,g_1,g_2)\in \Y:\ \|f\|_{L^2}^2=l>0\ \textrm{and}\ \sum_{j=1}^2\|g_j\|_{L^2}^2=r>0 \right\}.
\end{equation}
As usual in the method of concentration compactness, putting the method
into practice requires verifying the strict subadditivity condition for the function defined by (P2) with
respect to the constraint variables (see Lemma~\ref{subadd} below). If one can prove the strict
subadditivity inequality, all of what is proved in Section~\ref{VarProbP1} below should be readily
extendable to study the
problem (P2), in which case an analogous
result of Theorem~\ref{SOexistence} concerning the existence of minimizers will hold for the problem (P2) as well.
To obtain stability properties of solitary waves, one then considers the problem of finding for any $(l,r)\in (0, \infty)\times \mathbb{R},$
\begin{equation}\label{WdefNKK}
\Lambda_{\textrm{NKK}}(l,r)=\inf \{K(U):U=(h,g_1,g_2)\in \Y,\ Q(h)=l, \ \textrm{and}\ G(U)=r\}.
\end{equation}
The family of problems \eqref{WdefNKK} is suitable for studying the stability properties of solitary
waves for \eqref{3LSINKK} because both $K(U)$ and the constraint
functionals $Q(h)$ and $G(U)$ are invariants of motion of \eqref{3LSINKK}.
Once the existence of minimizers for the problem (P2) is guaranteed, one can follow the same arguments as
in Section~\ref{fullvar} below to obtain the stability result for the (1+2)-component NLS-gKdV solitary waves.
\end{remark}

\section{Local Well-Posedness in the Energy Regularity and Below}\label{LWP-Theory}
In this section  we supply proofs of the local-well-posedness results  to the IVPs associated to the (2+1)-component and (1+2)-component NLS-KdV systems.
We provide details of the proof of the Theorem \ref{Thm1} only, because the proof of the Theorem \ref{Thm2} follows similarly.

\subsection{Preliminary estimates}
Here we recall some important  smoothing properties related to the free propagators $S(t)=e^{it\partial_x^2}$ and $V(t)=e^{-t\partial_x^3}$, which will be useful to construct the local solutions in the energy space.

\smallskip
We begin by recalling the low-high projections operators via dyadic decomposition in the line. Let $\eta$ be a smooth nonnegative function such that
$$
\eta(\xi)=
\begin{cases}
1& \text{if}\; |\xi| \le 1,\\
0& \text{if}\; |\xi| \ge 2
\end{cases}
$$
with $\widehat{\phi}(\xi) = \eta(\xi)-\eta(2\xi) \in \mathcal{S}(\mathbb{R})$, supported in the set $\big\{\xi;\; 1/2 \le |\xi| \le 2\big\}$,  and satisfying $\displaystyle \sum\limits_{j\in \mathbb{Z}}\widehat{\phi}(\xi/2^j)=1,\; \xi \neq 0.$
Define $\phi_l$  by

$$\widehat{\phi_l}(\xi)=1- \sum\limits_{j=1}^{\infty}\widehat{\phi}\big(\tfrac{\xi}{2^j}\big)=\sum\limits_{j=0}^{\infty}\widehat{\phi}(2^j \xi)$$
and denote by $P_{\boldsymbol{l}}$  and $P_{\boldsymbol{h}}$ the operators
\begin{equation}\label{low-high-operators}
P_{\boldsymbol{l}}f = \phi_l *f\quad \text{and} \quad  P_{\boldsymbol{h}}f = (I-P_{\boldsymbol{l}})f.
\end{equation}
\begin{prop} The operators $P_{\boldsymbol{l}}f$ and $P_{\boldsymbol{h}}$ satisfy the following properties:
\medskip
\begin{enumerate}
\item[(a)] $\|P_{\boldsymbol{l}}D^{\alpha}f\|_{L^{\nu}}\lesssim \|f\|_{L^2}$\,for all $f\in \mathcal{S}(\mathbb{R})$,\;$\alpha \ge 0$ and $2\le \nu < \infty$.\medskip

\item[(b)] $\|P_{\boldsymbol{h}}g\|_{L^{\nu}_xL^{\rho}_T}\lesssim \|g\|_{L^{\nu}_xL^{\rho}_T}$\, for all $g\in \mathcal{S}(\mathbb{R}^2)$\; and\; $1\le \nu, \rho \le \infty$.
\end{enumerate}
\end{prop}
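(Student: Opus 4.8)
The plan is to establish each estimate directly from the definitions in \eqref{low-high-operators}, treating the two claims separately since they rely on quite different mechanisms. For part (a), I would begin by observing that $\widehat{\phi_l}$ is supported in the low-frequency region $\{|\xi|\le 2\}$ (it is a partition-of-unity tail capturing the dyadic pieces at scales $2^j$ with $j\le 0$). The operator $P_{\boldsymbol{l}}D^{\alpha}$ therefore has a smooth, compactly supported Fourier multiplier $m(\xi)=\widehat{\phi_l}(\xi)|\xi|^{\alpha}$. The strategy is to write $P_{\boldsymbol{l}}D^{\alpha}f = (\mathcal{F}^{-1}m)*f$ and bound the $L^{\nu}$ norm via Young's inequality: for $2\le \nu<\infty$, using the conjugate exponent relation $\tfrac1\nu + 1 = \tfrac1{\nu'} + \tfrac12$ (so that convolution of an $L^2$ function with an $L^{r}$ kernel lands in $L^{\nu}$ for the appropriate $r$), one obtains $\|P_{\boldsymbol{l}}D^{\alpha}f\|_{L^{\nu}}\lesssim \|\mathcal{F}^{-1}m\|_{L^{r}}\|f\|_{L^2}$. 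The key point is that $m$ is Schwartz (smooth with compact support), so $\mathcal{F}^{-1}m$ is Schwartz and in particular lies in every $L^r$; the implicit constant depends only on $\alpha$ and $\nu$. One must check that the constant stays finite as $\nu\to\infty$, but since we require $\nu<\infty$ this is not an issue.

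For part (b), the mechanism is instead the $L^{\nu}$-boundedness of a singular-type multiplier uniformly in the time variable. Since $P_{\boldsymbol{h}}=I-P_{\boldsymbol{l}}$ acts only in the $x$-variable, I would first fix $t$ and prove the spatial bound $\|P_{\boldsymbol{h}}g(\cdot,t)\|_{L^{\nu}_x}\lesssim \|g(\cdot,t)\|_{L^{\nu}_x}$ for all $1\le \nu\le\infty$, with a constant independent of $t$. This follows because the multiplier $1-\widehat{\phi_l}(\xi)$ is a smooth function that is bounded together with all its derivatives and satisfies the Mikhlin--Hörmander condition $|\xi|^{\ell}|\partial_\xi^{\ell}(1-\widehat{\phi_l})|\lesssim 1$, giving $L^{\nu}$-boundedness for $1<\nu<\infty$; the endpoints $\nu=1,\infty$ are handled by noting that the kernel of $P_{\boldsymbol{h}}$ differs from a Dirac mass by the Schwartz function $\mathcal{F}^{-1}(\widehat{\phi_l})$, so $P_{\boldsymbol{h}}$ is a bounded operator on $L^1$ and on $L^{\infty}$ as well. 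Once the spatial estimate holds uniformly in $t$, I would take the $L^{\rho}_T$ norm in time: by the very definition of the mixed norm,
\[
\|P_{\boldsymbol{h}}g\|_{L^{\nu}_xL^{\rho}_T}
=\big\|\,\|P_{\boldsymbol{h}}g(x,\cdot)\|_{L^{\rho}_T}\,\big\|_{L^{\nu}_x},
\]
and since $P_{\boldsymbol{h}}$ commutes with integration in $t$, the pointwise-in-$t$ bound passes through the $L^{\rho}_T$ norm and then the $L^{\nu}_x$ norm, yielding the claim for all $1\le\nu,\rho\le\infty$.

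The main obstacle, modest as it is, lies in part (b) at the endpoint exponents $\nu\in\{1,\infty\}$, where the Mikhlin--Hörmander multiplier theorem does not apply and one must argue instead by the explicit kernel representation $P_{\boldsymbol{h}}=\delta_0 - \mathcal{F}^{-1}(\widehat{\phi_l})$ and Young's convolution inequality with the Schwartz kernel $\mathcal{F}^{-1}(\widehat{\phi_l})\in L^1$. A secondary technical point is ensuring the uniformity of all constants in the time variable when commuting $P_{\boldsymbol{h}}$ (a purely spatial operator) with the $L^{\rho}_T$ norm; this is justified by Minkowski's integral inequality or, more directly, by the fact that the operator acts fiberwise in $t$. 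Both obstacles are standard harmonic-analysis facts, so the proof is essentially a careful bookkeeping of which multiplier theorem applies in which exponent range.
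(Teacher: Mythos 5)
Your part (a) rests on a false assertion. You claim the multiplier $m(\xi)=\widehat{\phi_l}(\xi)|\xi|^{\alpha}$ is ``Schwartz (smooth with compact support)'', so that $\mathcal{F}^{-1}m$ lies in every $L^r$ and Young's inequality applies. But $\widehat{\phi_l}\equiv 1$ on a neighborhood of the origin (the pieces $\widehat{\phi}(\xi/2^j)$, $j\ge 1$, are all supported in $|\xi|\ge 1$), so $m(\xi)=|\xi|^{\alpha}$ near $\xi=0$, which is not smooth there for general $\alpha\ge 0$ (take $\alpha=1/2$). Hence $\mathcal{F}^{-1}m$ is not Schwartz, and the step that it belongs to $L^{r}$ with $\tfrac1r=\tfrac12+\tfrac1{\nu}$ is unjustified as written. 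The conclusion is salvageable --- the kernel decays like $|x|^{-1-\alpha}$ and does lie in $L^1\cap L^2$ --- but that requires an argument about Fourier transforms of compactly supported symbols with a homogeneous singularity at the origin, which you do not supply. The paper sidesteps all of this: it uses the Sobolev embedding $\|h\|_{L^{\nu}}\lesssim\|D^{\frac12-\frac1{\nu}}h\|_{L^2}$ for $2\le\nu<\infty$ together with Plancherel, so that the only property needed of the symbol $\widehat{\phi_l}(\xi)|\xi|^{\alpha+\frac12-\frac1{\nu}}$ is boundedness, which is immediate from compact support and nonnegativity of the exponent. Either adopt that route or repair the kernel estimate.

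Part (b) is sound in substance and in fact more detailed than the paper, which simply cites \cite{[Mu]}. The Mikhlin--H\"ormander detour is unnecessary: since $P_{\boldsymbol{h}}=I-\phi_l\ast(\cdot)$ with $\phi_l\in\mathcal{S}(\mathbb{R})\subset L^1$, the claim for all $1\le\nu,\rho\le\infty$ follows from the vector-valued form of Young's inequality. Be careful, though, that in $L^{\nu}_xL^{\rho}_T$ the time norm is the \emph{inner} one, so a fixed-$t$ spatial bound does not by itself yield the mixed-norm estimate; one must first apply Minkowski's inequality to bound $\|P_{\boldsymbol{h}}g(x,\cdot)\|_{L^{\rho}_T}$ by $\|g(x,\cdot)\|_{L^{\rho}_T}+\big(|\phi_l|\ast\|g(\cdot,\cdot)\|_{L^{\rho}_T}\big)(x)$ and only then take the $L^{\nu}_x$ norm, as you indicate at the end of your proposal.
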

\begin{proof}
Since $\widehat{\phi_l}(\xi)=0$ for $\xi\ge 2$, the first estimate follows by using Sobolev's embedding and Plancherel's theorems. Indeed,
$$\|P_{\boldsymbol{l}}D^{\alpha}f\|_{L^{\nu}}\lesssim  \|D^{\frac12-\frac1{\nu}}P_{\boldsymbol{l}}D^{\alpha}f\|_{L^2}
=\|\widehat{\phi_l}(\xi)|\xi|^{\alpha +\frac12 -\frac1{\nu}}\hat{f}(\xi)\|_{L^2}\lesssim \|f\|_{L^2}.$$
The estimate in (b) can be found in \cite{[Mu]}.
\end{proof}

\smallskip
For any unitary group  $\big\{W(t)\big\}_{t\in \mathbb{R}}$ in $L^2(\mathbb{R})$ we have the following retarded convolution estimate
\begin{equation}\label{Estimate-Retarded-Conv}
\bigg\|\displaystyle \int_0^tW(t-t')f(\cdot, t')\,dt'\bigg\|_{L^{\infty}_TL^2_x}\le \|f\|_{L^1_TL^2_x}.
\end{equation}
Indeed, by Minkowski inequality and the properties of $W$ we get
$$\bigg\|\int_0^tW(t-t')f(\cdot, t')\,dt'\bigg\|_{L^2_x}\le \int_0^t\|f(\cdot, t')\|_{L^2_x}\,dt'.$$
Therefore,
$$\bigg\|\displaystyle \int_0^tW(t-t')f(\cdot, t')\,dt'\bigg\|_{L^{\infty}_TL^2_x} \le \sup\limits_{0\le t \le T}\int_0^t\|f(\cdot, t')\|_{L^2_x}\,dt'\le \|f\|_{L^1_TL^2_x}.$$

\medskip
Obviously, $S(t)$ and $V(t)$ satisfy \eqref{Estimate-Retarded-Conv}.

\smallskip
\begin{lem}[Smoothing effects for $e^{it\partial_x^2}$]\label{SEfeects-NLS}
Let $\mu>3/4$ and $s>1/2$. Then, for all positive $T$ we have the following  maximal function type estimates
\begin{enumerate}
\item[(a)] $\|S(t)\varphi\|_{L^2_xL^{\infty}_T} \le c(1+T)^{\mu}\|\varphi\|_{H^s}$.\medskip

\item[(b)] $\bigg\|\displaystyle \int_0^tS(t-t')f(\cdot, t')\,dt'\bigg\|_{L^2_xL^{\infty}_T}\le c(1+T)^{\mu}\|J^s_xf\|_{L^1_TL^2_x}$.
\end{enumerate}
\medskip
All constants $c$ are independent of the time $T$.
\end{lem}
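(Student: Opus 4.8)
The plan is to prove (a) first and then deduce (b) from it. For (a) I would begin by reducing the whole estimate to a single unit time window. Covering $[0,T]$ by the integer intervals $[j,j+1]$ and using both $\sup_{[0,T]}|\cdot|\le\big(\sum_{j}\sup_{[j,j+1]}|\cdot|^2\big)^{1/2}$ and the identity $S(j+\tau)=S(\tau)S(j)$ together with the $H^s$-isometry property of $S(t)$ (a unimodular Fourier multiplier commutes with $J^s$ and preserves every $H^\sigma$), one obtains
\[
\|S(t)\varphi\|_{L^2_xL^\infty_{[0,T]}}^2\le\sum_{j=0}^{\lceil T\rceil-1}\|S(\tau)(S(j)\varphi)\|_{L^2_xL^\infty_{[0,1]}}^2 .
\]
Thus it suffices to prove the unit-window bound $\|S(t)g\|_{L^2_xL^\infty_{[0,1]}}\lesssim\|g\|_{H^s}$ for $s>1/2$; summing the $\lceil T\rceil$ identical contributions then produces the factor $(1+T)^{1/2}$, which already implies the claimed bound since $(1+T)^{1/2}\le(1+T)^\mu$ (the exponent $3/4$ is imposed only to match the analogous gKdV maximal estimate of the companion lemma).

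On the unit window I would split $g=P_{\boldsymbol l}g+P_{\boldsymbol h}g$. For the low-frequency part $S(t)P_{\boldsymbol l}g$, whose spatial frequencies satisfy $|\xi|\le2$ and whose temporal oscillation $e^{-it\xi^2}$ therefore sits at frequency $\xi^2\le4$, the Sobolev embedding $H^\mu_t\hookrightarrow L^\infty_t$ ($\mu>1/2$) costs nothing: the $H^\mu_t$ norm is comparable to the $L^2_t$ norm, and Plancherel together with the $L^2$-unitarity of $S(t)$ give $\|S(t)P_{\boldsymbol l}g\|_{L^2_xL^\infty_{[0,1]}}\lesssim\|g\|_{L^2}\le\|g\|_{H^s}$ with an absolute constant. (The cut-off $\psi_T$ in \eqref{cut-off} is precisely the device that keeps this time-Sobolev step clean if one prefers to work directly on $[0,T]$ rather than summing unit windows.)

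The high-frequency part $S(t)P_{\boldsymbol h}g$ is the heart of the matter. A plain Sobolev embedding in time is far too lossy: a spatial frequency $N$ carries a temporal oscillation of frequency $\sim N^2$, so the time-Sobolev norm costs $N^{2\mu}$ and would force $s>2\mu>1$. The gain down to $s>1/2$ must come from the genuine dispersive spreading in $x$, and I would import it from the sharp local maximal function estimate for the Schr\"odinger group, $\|S(t)g\|_{L^2_xL^\infty_{[0,1]}}\lesssim\|g\|_{H^s}$ for every $s>1/2$. This is the Kenig--Ruiz/Kenig--Ponce--Vega estimate, proved by an oscillatory-integral ($TT^\ast$) argument; it is the single nontrivial external ingredient and the main obstacle of the whole lemma. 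The low/high splitting shows that everything except this bound is elementary, so the role of $P_{\boldsymbol h}$ is exactly to localize the deep input to the frequencies where it is indispensable.

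Finally, (b) follows from (a) by a Duhamel/Minkowski argument. For fixed $x$ one has
\[
\sup_{t\in[0,T]}\Big|\int_0^tS(t-t')f(x,t')\,dt'\Big|\le\int_0^T\sup_{t\in[0,T]}|S(t-t')f(x,t')|\,dt',
\]
and applying Minkowski's integral inequality in $L^2_x$ and then (a) to each slice $f(\cdot,t')$ (the window $t-t'\in[-T,T]$ has length comparable to $T$) yields $\le c(1+T)^\mu\int_0^T\|f(\cdot,t')\|_{H^s}\,dt'=c(1+T)^\mu\|J^s_xf\|_{L^1_TL^2_x}$, which is precisely (b). Pulling the supremum inside the time integral is exactly what lets this proceed without invoking the Christ--Kiselev lemma. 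Everything outside the high-frequency maximal estimate is soft: the interval summation, the frequency splitting, and the Minkowski step are all routine, so the sharp $s>1/2$ bound remains the only real difficulty.
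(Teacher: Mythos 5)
Your proposal is correct and follows essentially the same route as the paper: part (a) rests on the Kenig--Ponce--Vega local smoothing/maximal function estimate (the paper simply cites Corollary 2.9 of \cite{KPV-a}, which already carries the $(1+T)^{\mu}$ factor, whereas you rederive that growth by summing unit time windows and using the group property --- a harmless, slightly more self-contained variant), and part (b) is the same Minkowski-plus-unitarity argument, which the paper implements by writing $S(t-t')=S(t)S(-t')$ and using that $S(-t')$ preserves the $H^s$ norm so that (a) applies verbatim on $[0,T]$ rather than on a window of length $2T$. No gap.
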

\begin{proof}
For the proof of estimate in  (a) see Corollary 2.9 in \cite{KPV-a}. The estimate in (b) is obtained as follows:
\medskip
\begin{equation}\label{maximal-function-dual}
\begin{split}
\bigg\|\displaystyle \int_0^tS(t-t')f(\cdot, t')\,dt'\bigg\|_{L^2_xL^{\infty}_T}&
\le\displaystyle \int_0^T \big\|S(t)S(-t')f(\cdot, t')\big\|_{L^2_xL^{\infty}_T}\,dt'\\
&\le c\displaystyle \int_0^T (1+T)^{\mu}\big\|J^s_xS(-t')f(\cdot, t')\big\|_{L^2_x}\,dt'\\
&\le c\displaystyle \int_0^T (1+T)^{\mu}\big\|J^s_xf(\cdot, t')\big\|_{L^2_x}\,dt'\\
&= c(1+T)^{\mu}\| J^s_xf\|_{L^1_Tl^2_x},
\end{split}
\end{equation}
and we  finished the proof.
\end{proof}
The following result tells about similar results for the unitary group $V(t)$.
\begin{lem}[Smoothing effects for $e^{-t\partial_x^3}$]\label{SEfeects-KdV}
Let $\mu$\, and\, $s > 3/4$. Then, for all positive $T$  the following maximal function type estimates hold true.
\begin{enumerate}
\item[(a)] $\|V(t)\varphi\|_{L^2_xL^{\infty}_T}\le c(1+T)^{\mu}\|\varphi\|_{H^s}$. \medskip

\item[(b)] $\bigg\|\displaystyle \int_0^tV(t-t')g(\cdot, t')\,dt'\bigg\|_{L^2_xL^{\infty}_T}\le c(1+T)^{\mu}\|J^s_xg\|_{L^1_TL^2_x}$.\medskip
\end{enumerate}
Also, we have
\begin{enumerate}
\item[(c)] $\displaystyle \bigg\|\partial_x\int_0^tV(t-t')g(\cdot, t')\,dt' \bigg\|_{L^{\infty}_TL^2_x}\le  c\|g\|_{L^1_xL^2_T}$.
\end{enumerate}
\medskip
All constants $c$ are independent of the time $T$.
\end{lem}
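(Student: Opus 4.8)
The plan is to reduce each of the three estimates to a \emph{homogeneous} smoothing property of the Airy propagator $V(t)$ from \eqref{lin-prop} and then transfer it to the Duhamel (inhomogeneous) setting, mirroring exactly the treatment of the Schr\"odinger group in Lemma~\ref{SEfeects-NLS}. The two genuinely nontrivial inputs are both classical Kenig--Ponce--Vega estimates for $V(t)$: the maximal function estimate and the sharp local smoothing estimate. For part (a) I would simply quote the finite-time maximal function estimate for the Airy group from \cite{KPV-a} (the analogue of Lemma~\ref{SEfeects-NLS}(a)): the global-in-time bound gives $\|\,|V(t)\varphi|\,\|_{L^2_xL^\infty_t}\lesssim \|\varphi\|_{H^s}$ for $s>3/4$, and localizing to $[0,T]$ produces the factor $(1+T)^\mu$ with $c$ independent of $T$. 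The shift of the critical index from $s>1/2$ (Schr\"odinger) to $s>3/4$ reflects the cubic dispersion relation $\tau=\xi^3$.

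Part (b) then follows from (a) by the very argument used for Lemma~\ref{SEfeects-NLS}(b). Pointwise in $x$ one has $\sup_{0\le t\le T}\big|\int_0^tV(t-t')g(\cdot,t')\,dt'\big|\le \int_0^T\sup_{0\le t\le T}|V(t-t')g(\cdot,t')|\,dt'$; taking the $L^2_x$-norm and applying Minkowski's integral inequality reduces matters to estimating $\|V(\cdot-t')g(\cdot,t')\|_{L^2_xL^\infty_T}$ for each fixed $t'$. Writing $V(t-t')=V(t)V(-t')$ and invoking (a) gives the bound $c(1+T)^\mu\|V(-t')g(\cdot,t')\|_{H^s}$, and since $V(-t')$ commutes with $J^s_x$ and is unitary on $L^2$, we have $\|V(-t')g(\cdot,t')\|_{H^s}=\|J^s_xg(\cdot,t')\|_{L^2_x}$. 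Integrating in $t'$ produces the claimed $\|J^s_xg\|_{L^1_TL^2_x}$.

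For part (c) the relevant homogeneous input is the sharp Kato smoothing estimate $\|\partial_xV(t)h\|_{L^\infty_xL^2_T}\le c\|h\|_{L^2_x}$ (a gain of one full derivative), which I would take from \cite{KPV-a}. Using $V(t-t')=V(t)V(-t')$ and unitarity of $V(t)$ on $L^2_x$, the left-hand side equals $\sup_{0\le t\le T}\big\|\partial_x\int_0^tV(-t')g(\cdot,t')\,dt'\big\|_{L^2_x}$, which I estimate by duality: for $h\in L^2_x$, since $V(-t')^*=V(t')$ and $\partial_x^*=-\partial_x$,
\begin{equation*}
\Big\langle \partial_x\!\int_0^t\! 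V(-t')g(\cdot,t')\,dt',\,h\Big\rangle=-\int_0^t\langle g(\cdot,t'),\,\partial_xV(t')h\rangle\,dt',
\end{equation*}
whose modulus is at most $\int_{\R}\|g(x,\cdot)\|_{L^2_T}\,\|\partial_xV(\cdot)h(x)\|_{L^2_T}\,dx\le \|\partial_xV(t)h\|_{L^\infty_xL^2_T}\,\|g\|_{L^1_xL^2_T}$ after Fubini and Cauchy--Schwarz in time. The homogeneous smoothing estimate bounds the first factor by $c\|h\|_{L^2_x}$, and taking the supremum over $t\in[0,T]$ and over $\|h\|_{L^2_x}=1$ yields (c) with a constant independent of $T$. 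Note that the retarded truncation $\int_0^t$ causes no difficulty: passing to absolute values inside the pairing lets us replace it by $\int_0^T$ for free, so no Christ--Kiselev-type argument is needed.

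The main obstacle is therefore not the transference steps, which are routine duality and Minkowski arguments, but securing the two underlying homogeneous estimates for $V(t)$ — the $s>3/4$ maximal function bound and the one-derivative-gaining $L^\infty_xL^2_T$ local smoothing bound. These are precisely the KdV analogues of the Schr\"odinger estimates already recorded in Lemma~\ref{SEfeects-NLS}, so I would cite them from \cite{KPV-a} rather than reprove them, and the work of the lemma is concentrated in the clean duality computation for (c).
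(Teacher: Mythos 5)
Your proposal is correct and follows essentially the same route as the paper: part (a) is quoted from Kenig--Ponce--Vega \cite{KPV-a}, part (b) is transferred from (a) by the same Minkowski/group-property computation used for the Schr\"odinger case in \eqref{maximal-function-dual}, and part (c) is obtained as the dual of the Kato smoothing estimate \eqref{Kato-SE}. The only difference is that you write out the duality computation for (c) explicitly (correctly, including the observation that the retarded cut-off costs nothing once absolute values are taken), whereas the paper simply invokes it as the ``classical dual version.''
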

\begin{proof}
For the proof of estimate (a) see Corollary 2.9 in \cite{KPV-a} and (b) is obtained  similarly to \eqref{maximal-function-dual}. The estimate in (c) is the classical dual version of the Kato type smoothing effect
\begin{equation}\label{Kato-SE}
\|\partial_xV(t)\varphi \|_{L^{\infty}_xL^2_t} \le c\|\varphi\|_{L^2},
\end{equation}
proved in Lemma  2.1 of \cite{KPV-a}.
\end{proof}
Another important ingredient is the Proposition 2.7 of \cite{[Mo]}, which establishes a version of the so-called Christ-Kiselev lemma. The result reads as follows:
\begin{prop}\label{Prop-Christ-Kiselev}
	Let $s_1, s_2\in \mathbb{R}$\; and\; $1\le \rho_1, \nu_1, \rho_2, \nu_2\le \infty$ such that for all $\varphi \in \mathcal{S}(\mathbb{R})$,
	\begin{align}
	&\|D^{s_1}_xV(t)\varphi\|_{L^{\nu_1}_xL^{\rho_1}_T}\lesssim \alpha_T\|\varphi\|_{L^2},\medskip \\
	&\|D^{s_2}_xV(t)\varphi\|_{L^{\nu_2}_xL^{\rho_2}_T}\lesssim \beta_T\|\varphi\|_{L^2},
	\end{align}
	where $\alpha_T,\, \beta_T$  are positive constants depending on $T$. Then for all $f \in \mathcal{S}(\mathbb{R}^2)$,
	\begin{align}
	&\bigg\|D^{s_2}_x\int_0^tV(t-t')f(x,t')\,dt' \bigg\|_{L^{\infty}_TL^2_x}\lesssim \beta_T\|f\|_{L^{\nu'_2}_xL^{\rho'_2}_T},\medskip \\
	&\bigg\|D^{s_1+s_2}_x\int_0^tV(t-t')f(x,t')\,dt'\bigg\|_{L^{\nu_1}_xL^{\rho_1}_T}\lesssim  \alpha_T\beta_T\|f\|_{L^{\nu'_2}_xL^{\rho'_2}_T},
	\end{align}
	provided the conditions
	\begin{equation}
	\min(\nu_1, \rho_1)> \max(\nu'_2, \rho'_2)\quad \text{or}\quad \rho_1=\infty\;\; \text{and}\;\; \nu'_2,\, \rho'_2 < \infty.
	\end{equation}
\end{prop}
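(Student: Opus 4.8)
The plan is to derive both conclusions from the two homogeneous smoothing hypotheses in two stages: a duality ($TT^\ast$) argument producing the estimates with the \emph{full} time integral $\int_0^T$, and then a Christ--Kiselev maximal lemma that replaces $\int_0^T$ by the causal integral $\int_0^t$.

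First I would dualize the second hypothesis. Set $A\varphi := D^{s_2}_x V(t)\varphi$, so that the hypothesis reads $A\colon L^2_x \to L^{\nu_2}_x L^{\rho_2}_T$ with $\|A\|\lesssim \beta_T$. Pairing in $L^2(\mathbb{R}_x\times[0,T])$ and using that $V(t)$ is unitary (so $V(t)^\ast = V(-t)$) together with the self-adjointness of $D^{s_2}_x$, one identifies the adjoint as $A^\ast g=\int_0^T V(-t')D^{s_2}_x g(\cdot,t')\,dt'$; hence
\[ \Big\|\int_0^T V(-t')D^{s_2}_x g(\cdot,t')\,dt'\Big\|_{L^2_x}\lesssim \beta_T\|g\|_{L^{\nu_2'}_x L^{\rho_2'}_T}. \]
For the first conclusion, fix $t$ and apply the unitary $V(t)$ to this identity: since $\|V(t)h\|_{L^2_x}=\|h\|_{L^2_x}$ and $V(t)V(-t')=V(t-t')$, the left-hand side equals $\|D^{s_2}_x\int_0^T V(t-t')g\,dt'\|_{L^2_x}$, and taking the supremum over $t\in[0,T]$ gives the $L^\infty_T L^2_x$ bound with the full integral. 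For the second conclusion I compose the two hypotheses: the vector $\varphi:=\int_0^T V(-t')D^{s_2}_x g\,dt'$ lies in $L^2_x$ with $\|\varphi\|_{L^2}\lesssim \beta_T\|g\|_{L^{\nu_2'}_x L^{\rho_2'}_T}$, and feeding it into the first hypothesis yields
\[ \|D^{s_1}_x V(t)\varphi\|_{L^{\nu_1}_x L^{\rho_1}_T}\lesssim \alpha_T\|\varphi\|_{L^2}\lesssim \alpha_T\beta_T\|g\|_{L^{\nu_2'}_x L^{\rho_2'}_T}. \]
Because $D^{s_1}_x$ commutes with $V(t-t')$ and $D^{s_1}_x D^{s_2}_x=D^{s_1+s_2}_x$, the left-hand side is exactly $\|D^{s_1+s_2}_x\int_0^T V(t-t')g\,dt'\|_{L^{\nu_1}_x L^{\rho_1}_T}$, i.e.\ the untruncated form of the desired estimate.

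It remains to pass from $\int_0^T$ to $\int_0^t$, and this is the step I expect to be the main obstacle. I would invoke the Christ--Kiselev lemma: an operator given by a retarded kernel and bounded from $L^p$ into $L^q$ with $p<q$ \emph{strictly} remains bounded after the integral is truncated to $\{t'<t\}$. The subtlety is that the truncation variable $t$ is the \emph{inner} variable of the mixed norms $L^{\nu}_x L^{\rho}_T$, whereas the classical lemma is phrased for Banach-space--valued $L^p(\mathbb{R})$ with truncation in the outer variable; one therefore needs the mixed-norm version of the lemma, or a rearrangement of the norms through Minkowski's integral inequality. The hypothesis $\min(\nu_1,\rho_1)>\max(\nu_2',\rho_2')$ --- together with the endpoint alternative $\rho_1=\infty$ and $\nu_2',\rho_2'<\infty$ --- is precisely the strict separation between all input and output exponents that makes the dyadic decomposition underlying the Christ--Kiselev argument summable, and hence is exactly what the truncation step genuinely requires.
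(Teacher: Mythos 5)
The paper does not prove this proposition at all: it is quoted verbatim as Proposition 2.7 of the Molinet--Ribaud reference \cite{[Mo]} and used as a black box. Your outline --- duality/$TT^{\ast}$ to get the untruncated estimates $\big\|D^{s_2}_x\int_0^T V(t-t')f\,dt'\big\|_{L^{\infty}_TL^2_x}$ and $\big\|D^{s_1+s_2}_x\int_0^T V(t-t')f\,dt'\big\|_{L^{\nu_1}_xL^{\rho_1}_T}$, followed by a mixed-norm Christ--Kiselev lemma to replace $\int_0^T$ by $\int_0^t$ --- is exactly the argument in that reference, and your identification of the exponent condition $\min(\nu_1,\rho_1)>\max(\nu_2',\rho_2')$ (or the $\rho_1=\infty$ endpoint) as the strict input/output separation needed for the Christ--Kiselev dyadic decomposition is correct. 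The only part left unexecuted is the mixed-norm Christ--Kiselev step itself, which you flag honestly; since the paper treats the entire statement as imported, your sketch is at least as complete as the paper's treatment.
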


\smallskip
The following result is concerned to the localized maximal function type estimate for $V(t)$ in high frequencies and its proof can be found in \cite{Te}. Here we sketch  the proof with only one small difference.

\begin{lem}\label{localized-maximal-function-KdV}
Let $\mu >3/4$.  Then for all $g\in \mathcal{S}(\mathbb{R}^2)$, we have
$$\bigg\|\int_0^tV(t-t')P_{\boldsymbol{h}}g(x,t')\,dt' \bigg\|_{L^2_xL^{\infty}_T}\lesssim (1+T)^{\mu}\|P_{\boldsymbol{h}}g\|_{_{L^1_xL^2_T}}.$$
\end{lem}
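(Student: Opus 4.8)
The plan is to interpolate, via the Christ--Kiselev mechanism of Proposition~\ref{Prop-Christ-Kiselev}, between two sharp single-profile smoothing bounds for the Airy group: the maximal-function estimate of Lemma~\ref{SEfeects-KdV}(a), which costs slightly less than one derivative but delivers the target norm $L^2_xL^{\infty}_T$, and the Kato local-smoothing estimate \eqref{Kato-SE}, which \emph{gains} one full derivative in the norm $L^{\infty}_xL^2_T$. The high-frequency cutoff $P_{\boldsymbol{h}}$ is what lets these two orders of differentiation cancel exactly, and it simultaneously neutralizes the low-frequency singularity of the negative-order operator that will appear. I would apply Proposition~\ref{Prop-Christ-Kiselev} to the operator $V(t)P_{\boldsymbol{h}}$: its proof uses only the factorization $V(t-t')P_{\boldsymbol{h}}=\big(V(t)P_{\boldsymbol{h}}\big)\big(P_{\boldsymbol{h}}V(-t')\big)$, which still holds, so it suffices to furnish the two single-profile estimates it requires.

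First I would record those two estimates. Fix any $s$ with $3/4<s\le 1$. Since $P_{\boldsymbol{h}}$ commutes with $V(t)$ and with $D^{-1}$, Lemma~\ref{SEfeects-KdV}(a) gives, for all $\varphi\in \mathcal S(\mathbb{R})$,
\[
\|D^{-1}_xV(t)P_{\boldsymbol{h}}\varphi\|_{L^2_xL^{\infty}_T}
=\|V(t)\big(D^{-1}P_{\boldsymbol{h}}\varphi\big)\|_{L^2_xL^{\infty}_T}
\lesssim (1+T)^{\mu}\|D^{-1}P_{\boldsymbol{h}}\varphi\|_{H^s}
\lesssim (1+T)^{\mu}\|\varphi\|_{L^2},
\]
where the last step uses $\langle\xi\rangle^s|\xi|^{-1}\lesssim |\xi|^{s-1}\le 1$ on the support of $\widehat{P_{\boldsymbol{h}}\varphi}$, where $|\xi|\gtrsim 1$. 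This is the first hypothesis with $(s_1,\nu_1,\rho_1)=(-1,2,\infty)$ and $\alpha_T=(1+T)^{\mu}$. For the second, the sharp Kato estimate \eqref{Kato-SE}, which holds equally with $D^1_x$ in place of $\partial_x$ since only the magnitude $|\xi|$ survives in the underlying computation, yields
\[
\|D^{1}_xV(t)P_{\boldsymbol{h}}\varphi\|_{L^{\infty}_xL^2_T}
\lesssim \|P_{\boldsymbol{h}}\varphi\|_{L^2}\lesssim \|\varphi\|_{L^2},
\]
the second hypothesis with $(s_2,\nu_2,\rho_2)=(1,\infty,2)$ and $\beta_T=c$ independent of $T$.

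Then I would invoke the second conclusion of Proposition~\ref{Prop-Christ-Kiselev}. Since $s_1+s_2=0$, $(\nu_1,\rho_1)=(2,\infty)$ and $(\nu'_2,\rho'_2)=(1,2)$, it reads
\[
\bigg\|\int_0^tV(t-t')P_{\boldsymbol{h}}g(x,t')\,dt'\bigg\|_{L^2_xL^{\infty}_T}
\lesssim \alpha_T\beta_T\,\|P_{\boldsymbol{h}}g\|_{L^1_xL^2_T}
\lesssim (1+T)^{\mu}\|P_{\boldsymbol{h}}g\|_{L^1_xL^2_T},
\]
which is exactly the asserted bound. The one point that must be checked with care is the admissibility condition: the strict branch $\min(\nu_1,\rho_1)>\max(\nu'_2,\rho'_2)$ \emph{fails} here, since $\min(2,\infty)=2=\max(1,2)$, so one is forced into the endpoint branch $\rho_1=\infty$ with $\nu'_2,\rho'_2<\infty$, which does hold because $\rho_1=\infty$ while $\nu'_2=1$ and $\rho'_2=2$ are finite. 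This is precisely why the maximal-function estimate for $V(t)$ must be placed in the $L^2_xL^{\infty}_T$ slot.

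The hard part will be bookkeeping rather than analysis: aligning the single derivative \emph{gained} from Kato smoothing ($s_2=1$) against the single derivative \emph{spent} in the maximal-function estimate ($s_1=-1$), so that the Duhamel term carries no net derivative, while keeping the output norm $L^2_xL^{\infty}_T$ and the input norm $L^1_xL^2_T$ compatible with the only available (endpoint) branch of Christ--Kiselev. The cutoff $P_{\boldsymbol{h}}$ is indispensable throughout, since it renders $D^{-1}$ harmless at low frequencies, and the resulting time weight $(1+T)^{\mu}$ produced by Lemma~\ref{SEfeects-KdV}(a) is the sole difference from the argument in \cite{Te}.
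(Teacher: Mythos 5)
Your proposal is correct and follows essentially the same route as the paper's own proof: both derive the pair of single-profile bounds $\|D^{-1}_xV(t)P_{\boldsymbol{h}}\varphi\|_{L^2_xL^{\infty}_T}\lesssim (1+T)^{\mu}\|P_{\boldsymbol{h}}\varphi\|_{L^2}$ (from Lemma~\ref{SEfeects-KdV}(a) plus the high-frequency cutoff) and $\|D^{1}_xV(t)P_{\boldsymbol{h}}\varphi\|_{L^{\infty}_xL^2_T}\lesssim\|P_{\boldsymbol{h}}\varphi\|_{L^2}$ (from \eqref{Kato-SE}), and then apply Proposition~\ref{Prop-Christ-Kiselev} with $s_1=-1$, $s_2=1$, $(\nu_1,\rho_1)=(2,\infty)$, $(\nu_2,\rho_2)=(\infty,2)$. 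Your explicit verification of the endpoint branch of the admissibility condition and of the applicability of the Christ--Kiselev mechanism to $V(t)P_{\boldsymbol{h}}$ are details the paper leaves implicit, but the argument is the same.
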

\begin{proof}
From the definition of $P_{\boldsymbol{h}}$ it follows that
$\displaystyle \|J^s_xP_{\boldsymbol{h}} \varphi \|_{L^2_x}\lesssim \|D^s_xP_{\boldsymbol{h}}\varphi\|_{L^2_x}$
for all $s\in \mathbb{R}$ and $\varphi \in \mathcal{S}(\mathbb{R})$. So,  taking  $s>3/4$, by Lemma \ref{SEfeects-KdV}-(a) and previous inequality we have
$$\|V(t)P_{\boldsymbol{h}} \varphi\|_{L^2_xL^{\infty}_T}\lesssim (1+T)^{\mu}\|J^s_xP_{\boldsymbol{h}}\varphi\|_{L^2_x}
\lesssim (1+T)^{\mu}\|D^s_xP_{\boldsymbol{h}}\varphi\|_{L^2_x},$$
which implies
\begin{equation}\label{L-MaxF-a}
\|D^{-1}V(t)P_{\boldsymbol{h}} \varphi\|_{L^2_xL^{\infty}_T}\lesssim (1+T)^{\mu}\|P_{\boldsymbol{h}}\varphi\|_{L^2_x},
\end{equation}
for all $\varphi \in \mathcal{S}(\mathbb{R})$.

\medskip
On the other hand, by \eqref{Kato-SE}
\begin{equation}\label{L-MaxF-b}
\|D^1_xV(t)P_{\boldsymbol{h}} \varphi\|_{L^{\infty}_xL^2_T}\lesssim \|P_{\boldsymbol{h}}\varphi\|_{L^2_x}.
\end{equation}

\medskip
Finally, applying Proposition \ref{Prop-Christ-Kiselev} with $s_1=-1$,\;  $s_2=1$,\; $(\nu_1, \rho_1)=(2,\infty)$ and $(\nu_2, \rho_2)=(\infty,2)$ we obtain the desired result.
\end{proof}

\medskip
Finally, we recall  the following particular version of the  Gagliardo-Niremberg inequality.

\begin{prop}\label{GN-inequality}Let $p\ge 2$. Then, for any $f\in H^1(\mathbb{R})$ it holds that
$$\|f\|_{L^p}\le c_p\|f\|^{\frac12 + \frac1p}_{L^2}\|\partial_x f\|^{\frac12-\frac1p}_{L^2}\le c_p\|f\|_{H^1}.$$
\end{prop}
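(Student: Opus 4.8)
The plan is to prove the estimate first at the endpoint exponent $p=\infty$ and then interpolate against the $L^2$ norm, carrying out everything for Schwartz functions and passing to $H^1(\mathbb{R})$ by density at the very end. So I would first reduce to $f\in\mathcal{S}(\mathbb{R})$: given $f\in H^1(\mathbb{R})$, choose $f_n\in\mathcal{S}(\mathbb{R})$ with $f_n\to f$ in $H^1$; applying the (already established) inequality to the differences $f_n-f_m$ shows that $\{f_n\}$ is Cauchy in $L^p$, and comparing with the $L^2$ convergence identifies the limit as $f$, after which one lets $n\to\infty$ in the inequality for $f_n$.

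The key analytic input is the sharp one-dimensional bound $\|f\|_{L^\infty}^2\le 2\|f\|_{L^2}\|\partial_x f\|_{L^2}$. This follows from the fundamental theorem of calculus: for $f\in\mathcal{S}(\mathbb{R})$ one writes $f(x)^2=\int_{-\infty}^x \partial_y\big(f(y)^2\big)\,dy=2\int_{-\infty}^x f(y)\,\partial_y f(y)\,dy$ and bounds the right-hand side by $2\|f\|_{L^2}\|\partial_x f\|_{L^2}$ via Cauchy--Schwarz; taking the supremum over $x$ gives the claim.

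Next comes the interpolation. For $p\ge 2$ I would write $\|f\|_{L^p}^p=\int |f|^{p-2}\,|f|^2\le \|f\|_{L^\infty}^{p-2}\|f\|_{L^2}^2$, which gives $\|f\|_{L^p}\le \|f\|_{L^\infty}^{1-\frac2p}\|f\|_{L^2}^{\frac2p}$. Substituting the endpoint bound from the previous step and collecting exponents — the power of $\|f\|_{L^2}$ becomes $\big(\tfrac12-\tfrac1p\big)+\tfrac2p=\tfrac12+\tfrac1p$ while the power of $\|\partial_x f\|_{L^2}$ becomes $\tfrac12-\tfrac1p$ — yields the first inequality with the explicit constant $c_p=2^{\frac12-\frac1p}$.

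The second inequality is then immediate from the fact that the two exponents satisfy $\big(\tfrac12+\tfrac1p\big)+\big(\tfrac12-\tfrac1p\big)=1$ together with the trivial bounds $\|f\|_{L^2}\le\|f\|_{H^1}$ and $\|\partial_x f\|_{L^2}\le\|f\|_{H^1}$, so the product $\|f\|_{L^2}^{\frac12+\frac1p}\|\partial_x f\|_{L^2}^{\frac12-\frac1p}$ is controlled by $\|f\|_{H^1}$. There is no genuine obstacle in this argument: the interpolation and the endpoint estimate are elementary, and the only point deserving a little care is the approximation step, where one must ensure that the nonlinear right-hand side converges under $H^1$ approximation — which is guaranteed since each factor depends continuously on the $H^1$ norm.
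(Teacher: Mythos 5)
Your proof is correct: the endpoint bound $\|f\|_{L^\infty}^2\le 2\|f\|_{L^2}\|\partial_x f\|_{L^2}$, the interpolation $\|f\|_{L^p}\le\|f\|_{L^\infty}^{1-\frac2p}\|f\|_{L^2}^{\frac2p}$, the exponent bookkeeping giving $c_p=2^{\frac12-\frac1p}$, and the density passage from $\mathcal{S}(\mathbb{R})$ to $H^1(\mathbb{R})$ are all sound. The paper merely recalls this proposition as a known fact and gives no proof, so there is nothing to compare against; your argument is the standard derivation and fills that gap completely (the only cosmetic remark is that for complex-valued $f$ one should write $|f(x)|^2=2\,\mathrm{Re}\int_{-\infty}^{x}\overline{f}\,\partial_yf\,dy$ in the endpoint step).
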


We finish this subsection by recording some more linear and nonlinear estimates in the framework of the Bourgain space. These estimates are used to get well-posedness theory below energy space for some particular nonlinearities involved in the systems \eqref{3LSI} and \eqref{3LSINKK}. To simplify the exposition, we borrow notations from \cite{CL}.

\smallskip
Let $W_{\phi}(t):=e^{-it(-i\partial_x)}$ represents the unitary group that describes the solution of the linear problem
\begin{equation}\label{lin-eq1}
i\partial_tw -\phi(-i\partial_x)w=0.
\end{equation}
With this notation, let us define the associated function space $X^{s,b}(\phi)$, $s,b\in\R$ as the completion of the Schwartz space with respect to the norm
\begin{equation}
\|f\|_{X^{s,b}(\phi)} := \|W_{\phi}(-t)f\|_{H_t^b(\R;H_x^s(\R)} = \|\langle\xi\rangle^s\langle\tau+\phi(\xi)\rangle^b\widehat{f}(\tau,\xi)\|_{L_{\tau}^2L_{\xi}^2}.
\end{equation}
Considering respectively $\phi(\xi) =\phi_1(\xi) =\xi^2$ and $\phi(\xi) =\phi_2(\xi) =-\xi^3$ we have
$X^{s,b}(\phi_1)=X^{s,b}$ and $X^{k,b}(\phi_2)=Y^{k,b}$.

\begin{lem}\label{lem-0}
Let  $0\leq T\leq 1$ and $-\frac12<b'\leq 0\leq b\leq b'+1$. Then for any $f\in H^s$ and $g\in X^{s,b'}(\phi)$, the following estimates hold true.
\begin{equation}\label{linear -1}
\|\psi(t)W_{\phi}(t)f\|_{X^{s,b}(\phi)}\lesssim \|f\|_{H^s},
\end{equation}
\begin{equation}\label{liner -2}
\|\psi_T(t)\int_0^tW_{\phi}(t-t')g(t',\cdot)dt'\|_{X^{s,b}(\phi)}\lesssim T^{1-b+b'}\|g\|_{X^{s,b'}(\phi)}.
\end{equation}
\end{lem}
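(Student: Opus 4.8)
The plan is to reduce both inequalities to estimates in the time variable alone, exploiting the defining identity $\|h\|_{X^{s,b}(\phi)}=\|W_{\phi}(-t)h\|_{H_t^b(\R;H_x^s)}$ together with the fact that $W_{\phi}$ commutes with the spatial multiplier $\langle\xi\rangle^s$ and with integration in $t$. For the homogeneous estimate \eqref{linear -1}, I would first apply $W_{\phi}(-t)$ and use the group property $W_{\phi}(-t)W_{\phi}(t)=\mathrm{Id}$ to obtain the clean cancellation
$$W_{\phi}(-t)\big[\psi(t)W_{\phi}(t)f\big]=\psi(t)f.$$
Since the right-hand side factors as a function of $t$ times a function of $x$, its $H^b_t(H^s_x)$ norm separates as $\|\psi\|_{H^b_t}\,\|f\|_{H^s_x}$, and because $\psi\in C_0^{\infty}(\R)$ is fixed we have $\|\psi\|_{H^b_t}<\infty$ for every $b$; this yields \eqref{linear -1} with an implicit constant depending only on $b$ and $\psi$.

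For the inhomogeneous estimate \eqref{liner -2}, I would set $G(t):=W_{\phi}(-t)g(t)$, so that $\|G\|_{H^{b'}_t(H^s_x)}=\|g\|_{X^{s,b'}(\phi)}$, and observe that applying $W_{\phi}(-t)$ to the Duhamel integral again produces a cancellation,
$$W_{\phi}(-t)\int_0^t W_{\phi}(t-t')g(t')\,dt'=\int_0^t G(t')\,dt'.$$
This reduces \eqref{liner -2} to the purely temporal (Hilbert-space-valued) inequality
$$\Big\|\psi_T(t)\int_0^t G(t')\,dt'\Big\|_{H^b_t}\lesssim T^{\,1-b+b'}\,\|G\|_{H^{b'}_t},$$
which may be treated at each spatial frequency and is a classical one-dimensional estimate. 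To prove it I would write $\int_0^t G=\frac{1}{2\pi}\int\widehat{G}(\tau)\,\frac{e^{it\tau}-1}{i\tau}\,d\tau$ and split the $\tau$-integral into the regions $|\tau|>1$ and $|\tau|\le 1$. On $|\tau|>1$ the factor $1/\tau$ is integrable: the term carrying $e^{it\tau}$ is handled like a modulated free evolution, reusing the structure of the homogeneous estimate, while the constant-in-$t$ term $-1/(i\tau)$ is pulled out of the time integral, and both contributions are controlled via the scaling $\widehat{\psi_T}(\tau)=T\,\widehat{\psi}(T\tau)$ to extract the correct power of $T$. On $|\tau|\le 1$, where division by $\tau$ is singular, I would expand $e^{it\tau}-1=\sum_{k\ge 1}(it\tau)^k/k!$, so that the dangerous factor becomes the absolutely convergent series $\sum_{k\ge 1}(it)^k\tau^{k-1}/k!$; each term is then estimated using the smoothness of $\psi$ and the scaling of $\psi_T$, the hypothesis $b\le b'+1$ ensuring that the summed bounds close with the claimed gain $T^{\,1-b+b'}$.

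The main obstacle is the low-frequency region $|\tau|\le 1$ in the inhomogeneous estimate: the singular weight $1/\tau$ must be absorbed, and the exact exponent $1-b+b'$ tracked, which forces one to balance the Taylor expansion of $e^{it\tau}-1$ against the $T$-scaling of $\psi_T$ and to use the constraints $-\tfrac12<b'\le 0\le b\le b'+1$ and $0<T\le 1$ in full. By contrast, the homogeneous estimate and the high-frequency part of the Duhamel estimate are comparatively routine.
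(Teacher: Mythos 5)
The paper does not prove this lemma at all: it simply cites Ginibre--Tsutsumi--Velo \cite{GTV}, whose argument is exactly the strategy you outline (conjugate by the group to reduce to a scalar-in-time estimate, write the Duhamel integral as $\frac{1}{2\pi}\int\widehat G(\tau)\frac{e^{it\tau}-1}{i\tau}d\tau$, Taylor-expand at low frequencies). Your reduction steps and the homogeneous estimate are correct. However, there is a genuine quantitative gap in the inhomogeneous estimate: you split the $\tau$-integral at $|\tau|=1$, whereas the correct threshold is $|\tau|=T^{-1}$, and with your choice the claimed factor $T^{1-b+b'}$ cannot be extracted in the high-frequency region.

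Concretely, consider the constant-in-$t$ term $-\psi_T(t)\,c$ with $c=\int_{|\tau|>1}\widehat G(\tau)(i\tau)^{-1}d\tau$. Over the region $|\tau|>1$ Cauchy--Schwarz only gives $|c|\lesssim\|G\|_{H^{b'}}$ with no power of $T$, and the scaling of $\psi_T$ gives $\|\psi_T\|_{H^b}\lesssim T^{1/2-b}$ (for $0\le b$, $T\le 1$), which is essentially sharp. The resulting bound $T^{1/2-b}\|G\|_{H^{b'}}$ is strictly weaker than the target $T^{1-b+b'}\|G\|_{H^{b'}}$ precisely because $b'>-\tfrac12$ forces $1-b+b'>\tfrac12-b$, i.e.\ $T^{1-b+b'}<T^{1/2-b}$. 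The same problem occurs for the $e^{it\tau}$ piece: $\big\|\mathcal F^{-1}\big(\chi_{|\tau|>1}\widehat G(\tau)/(i\tau)\big)\big\|_{H^b}\lesssim\sup_{|\tau|>1}\langle\tau\rangle^{b-b'-1}\,\|G\|_{H^{b'}}\lesssim\|G\|_{H^{b'}}$, again with no gain in $T$. If instead you cut at $|\tau|=T^{-1}$, the tail integrals supply the missing powers: $\big(\int_{|\tau|\ge T^{-1}}|\tau|^{-2}\langle\tau\rangle^{-2b'}d\tau\big)^{1/2}\lesssim T^{1/2+b'}$ (using $b'>-\tfrac12$), which combined with $\|\psi_T\|_{H^b}\lesssim T^{1/2-b}$ gives exactly $T^{1-b+b'}$, and $\sup_{|\tau|\ge T^{-1}}\langle\tau\rangle^{b-b'-1}\lesssim T^{1+b'-b}$ (using $b\le b'+1$) handles the modulated piece. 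Your low-frequency Taylor expansion is fine on either region (on $|\tau|\le T^{-1}$ one has $|t\tau|\le 2$ on $\operatorname{supp}\psi_T$, so the series still converges and yields the sharp power). The power of $T$ is not cosmetic here: the paper uses it in \eqref{T-epsilon} to close the contraction, so the lemma must be proved with the $T^{-1}$ threshold.
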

\begin{proof}
Proof of this lemma can be found in \cite{GTV}, so we omit the details.
\end{proof}

\noindent Now we state the following bilinear estimate whose proof can be found in \cite{KPV}.

\begin{lem}\label{lem-1}
Let $\kappa>-\frac34$, $b, b'=\frac12+$, then for any $v, w\in Y^{\kappa, b}$
\begin{equation}\label{eq2.6}
\|\partial_x(vw)\|_{Y^{\kappa, b'-1}}\lesssim \|v\|_{Y^{\kappa, b}}\|w\|_{Y^{\kappa,b}}.
\end{equation}
\end{lem}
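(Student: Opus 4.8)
The plan is to reduce \eqref{eq2.6} to a weighted convolution inequality on the Fourier side and then to exploit the algebraic structure of the KdV symbol, following the scheme of Kenig--Ponce--Vega \cite{KPV}. Writing $\sigma=\tau-\xi^{3}$ for the modulation variable and setting
\[
f(\xi,\tau)=\langle\xi\rangle^{\kappa}\langle\tau-\xi^{3}\rangle^{b}\,\widehat{v}(\xi,\tau),\qquad
g(\xi,\tau)=\langle\xi\rangle^{\kappa}\langle\tau-\xi^{3}\rangle^{b}\,\widehat{w}(\xi,\tau),
\]
so that $\|f\|_{L^{2}}=\|v\|_{Y^{\kappa,b}}$ and $\|g\|_{L^{2}}=\|w\|_{Y^{\kappa,b}}$, a duality argument against a test function $h\in L^{2}(\R^{2})$ with $\|h\|_{L^{2}}=1$ shows that \eqref{eq2.6} is equivalent to the trilinear bound
\[
\Big|\int\!\!\int K\,f(\xi_{1},\tau_{1})\,g(\xi_{2},\tau_{2})\,\overline{h(\xi,\tau)}\; d\xi_{1}\,d\tau_{1}\,d\xi\,d\tau\Big|
\lesssim \|f\|_{L^{2}}\|g\|_{L^{2}}\|h\|_{L^{2}},
\]
integrated under the constraints $\xi_{2}=\xi-\xi_{1}$, $\tau_{2}=\tau-\tau_{1}$, where
\[
K=\frac{|\xi|\,\langle\xi\rangle^{\kappa}}{\langle\sigma\rangle^{1-b'}\,\langle\xi_{1}\rangle^{\kappa}\langle\xi_{2}\rangle^{\kappa}\,\langle\sigma_{1}\rangle^{b}\langle\sigma_{2}\rangle^{b}},
\qquad \sigma_{j}=\tau_{j}-\xi_{j}^{3}.
\]

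The engine of the argument is the resonance identity
\[
\sigma-\sigma_{1}-\sigma_{2}=\xi_{1}^{3}+\xi_{2}^{3}-(\xi_{1}+\xi_{2})^{3}=-3\,\xi\,\xi_{1}\,\xi_{2},
\]
valid on the integration set, which forces
\[
\max\big(\langle\sigma\rangle,\langle\sigma_{1}\rangle,\langle\sigma_{2}\rangle\big)\gtrsim |\xi\,\xi_{1}\,\xi_{2}|.
\]
Thus at least one of the three modulation weights is large precisely when the frequencies (and hence the derivative loss $|\xi|$ in the numerator of $K$) are large, and this gain is what must absorb the $\partial_{x}$. I would therefore decompose the domain of integration into three regions according to which of $\langle\sigma\rangle,\langle\sigma_{1}\rangle,\langle\sigma_{2}\rangle$ realizes the maximum, and in each region trade a fraction of the dominant modulation weight for a power of $|\xi\,\xi_{1}\,\xi_{2}|$.

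In each region I would then apply Cauchy--Schwarz in the two ``free'' variables, leaving a multiplier that has to be bounded uniformly. This reduces matters to a family of one-dimensional calculus estimates. First one integrates out the temporal variable $\tau_{1}$ via a standard convolution bound of the type $\int_{\R}\langle\tau_{1}-a\rangle^{-2b}\langle\tau_{1}-c\rangle^{-2b}\,d\tau_{1}\lesssim\langle a-c\rangle^{-(2b-1)}$, which is legitimate since $2b>1$; what then survives is a purely spatial integral $\int_{\R}W(\xi,\xi_{1})\,d\xi_{1}$ that must be shown to be finite and bounded in $\xi$. The main obstacle is exactly this spatial integral in the region where the \emph{output} modulation $\langle\sigma\rangle$ dominates: carrying out the natural change of variables through the resonance function $\xi_{1}\mapsto \xi\,\xi_{1}\,(\xi-\xi_{1})$, whose derivative $\xi(\xi-2\xi_{1})$ degenerates at the critical point $\xi_{1}=\xi/2$, one finds that the combination of the frequency weights $\langle\xi\rangle^{\kappa}\langle\xi_{1}\rangle^{-\kappa}\langle\xi_{2}\rangle^{-\kappa}$ with the modulation gain makes the integral convergent if and only if the net exponent $2\kappa+\tfrac32$ is positive, i.e.\ exactly when $\kappa>-\tfrac34$. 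Controlling the contribution near this critical point, and checking that the borderline exponent stays subcritical, is the delicate heart of the proof and the reason $\kappa=-\tfrac34$ is sharp. The remaining two regions, in which an input modulation $\langle\sigma_{j}\rangle$ dominates, are handled by the same mechanism but are strictly easier, since the surviving factor $\langle\xi_{j}\rangle^{-\kappa}$ is then favorably placed. As all of these borderline computations are carried out in full in \cite{KPV}, I would invoke that reference for the detailed verification of the critical calculus estimate rather than reproduce it here.
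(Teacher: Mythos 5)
Your proposal is correct and matches the paper, which gives no proof of its own for this lemma and simply cites Kenig--Ponce--Vega \cite{KPV}; your sketch is a faithful outline of that reference's argument (duality, the resonance identity $\sigma-\sigma_1-\sigma_2=-3\xi\xi_1\xi_2$, case decomposition by the dominant modulation, and the degenerate change of variables at $\xi_1=\xi/2$ that produces the $\kappa>-\tfrac34$ threshold), and you defer the critical calculus estimate to the same source. Nothing further is needed.
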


\noindent Also, we record the following trilinear estimate from \cite{BOP}.

\begin{lem}\label{lem-1.1}
Let $s\geq 0$, $\frac12<b<1$ and $u\in H^{s,b}$. Then for any $a\geq 0$, one has
\begin{equation}\label{trilinear}
\||u\|^2u\|_{H^{s, -a}}\leq C\|u\|_{X^{s,b}}^3.
\end{equation}
\end{lem}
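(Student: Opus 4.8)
The plan is to exploit that the target norm $H^{s,-a}$ (which I read as the Schr\"odinger Bourgain space $X^{s,-a}$, consistent with the right-hand side $\|u\|_{X^{s,b}}$) carries a \emph{negative} power of the modulation weight. Since $a\ge 0$, one has $\langle\tau+\xi^2\rangle^{-a}\le 1$, so by the definition of the norm and Plancherel
\begin{equation*}
\||u|^2u\|_{H^{s,-a}} = \big\|\langle\xi\rangle^s\langle\tau+\xi^2\rangle^{-a}\,\widehat{|u|^2u}\big\|_{L^2_{\tau,\xi}} \le \big\|\langle\xi\rangle^s\,\widehat{|u|^2u}\big\|_{L^2_{\tau,\xi}} = \|J^s_x(|u|^2u)\|_{L^2_tL^2_x}.
\end{equation*}
Thus it suffices to prove $\|J^s_x(|u|^2u)\|_{L^2_{t,x}}\lesssim \|u\|_{X^{s,b}}^3$. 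In particular no use of the resonance (time-frequency) structure is needed, and this is exactly why every $a\ge0$ is admissible: the estimate is of the crude ``no-gain'' type.

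Next I would distribute the derivative $J^s_x$ across the product $|u|^2u = u\,\overline{u}\,u$ by the fractional Leibniz (Kato--Ponce) rule, applied twice to the triple product. Up to harmless symmetric terms, the worst contribution places all $s$ derivatives on a single factor with the other two factors undifferentiated; note that $\overline{J^s_x u}=J^s_x\overline{u}$, so the conjugation is invisible at the level of $L^p$-norms. Choosing the balanced H\"older exponents $\tfrac12=\tfrac16+\tfrac16+\tfrac16$ first in $x$ and then in $t$ gives
\begin{equation*}
\|J^s_x(u\,\overline{u}\,u)\|_{L^2_tL^2_x} \lesssim \big\|\,\|J^s_x u\|_{L^6_x}\,\|u\|_{L^6_x}^2\,\big\|_{L^2_t} \le \|J^s_x u\|_{L^6_tL^6_x}\,\|u\|_{L^6_tL^6_x}^2.
\end{equation*}

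To close the estimate I would invoke the sharp one-dimensional $L^6_{t,x}$ Strichartz estimate for the Schr\"odinger group, transferred to the Bourgain space through the standard embedding $X^{0,b}\hookrightarrow L^6_{t,x}$, valid for $b>\tfrac12$. This yields $\|u\|_{L^6_{t,x}}\lesssim\|u\|_{X^{0,b}}\le\|u\|_{X^{s,b}}$ (using $s\ge0$), and, applied to $J^s_x u$ together with the identity $\|J^s_x u\|_{X^{0,b}}=\|u\|_{X^{s,b}}$, also $\|J^s_x u\|_{L^6_{t,x}}\lesssim\|u\|_{X^{s,b}}$. Combining the three displays gives the claim.

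The one genuinely technical point is the fractional Leibniz step when $s>0$ is non-integer: for $s=0$ the second display is just H\"older in the exponents $(6,6,6)$, but for general $s\ge0$ one must justify that $J^s_x$ acting on the triple product can be redistributed onto single factors with $L^6$-bounded remainders. Everything else is soft: the exact arithmetic match between the cubic nonlinearity and the $L^6_{t,x}$ Strichartz exponent in dimension one means there is no derivative loss and no need to separate frequencies, so the argument never engages with the dispersive/resonant interactions. This robustness is precisely what lets the modulation index on the left be pushed all the way down to any $-a\le 0$.
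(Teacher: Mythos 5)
Your proof is correct. Note that the paper does not actually prove this lemma: it simply records the estimate as a citation to Bekiranov--Ogawa--Ponce \cite{BOP}, and the argument you give --- drop the harmless weight $\langle\tau+\xi^2\rangle^{-a}\le 1$, reduce to an $L^2_{t,x}$ bound for $J^s_x(|u|^2u)$, distribute $J^s_x$ by the Kato--Ponce fractional Leibniz rule, apply H\"older with exponents $(6,6,6)$ in $x$ and $t$, and close with the transferred one-dimensional $L^6_{t,x}$ Strichartz embedding $X^{0,b}\hookrightarrow L^6_{t,x}$ for $b>\tfrac12$ --- is precisely the standard proof of that cited estimate, so there is nothing to object to.
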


\noindent The following results are obtained in \cite{WU}.

\begin{lem}\label{lem-2}
Let $\kappa \geq -1$ and suppose $s-\kappa\leq 2$ when $s\geq 0$,  $s+\kappa \geq -2$ when $s<0$. Also consider $c, c', b =\frac12+$.
Then for any $u\in X^{s,c}$ and $v\in Y^{\kappa, b}$, one has
\begin{equation}\label{eq2.7}
\|uv\|_{X^{s, c'-1}}\lesssim \|u\|_{X^{s,c}}\|v\|_{Y^{\kappa, b}}.
\end{equation}
\end{lem}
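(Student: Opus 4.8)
The plan is to establish \eqref{eq2.7} by duality, converting it into a weighted trilinear convolution estimate on the space-time Fourier side and then exploiting the algebraic resonance relation between the Schr\"odinger and KdV modulations. Since the space dual to $X^{s,c'-1}$ with respect to the $L^2$-pairing is $X^{-s,1-c'}$ (and $1-c'=\tfrac12-$), the estimate is equivalent to
\[
\left|\iint_{\R^2} u\, v\, \overline{w}\,dx\,dt\right|\lesssim \|u\|_{X^{s,c}}\,\|v\|_{Y^{\kappa,b}}\,\|w\|_{X^{-s,1-c'}}
\]
for every $w\in \mathcal{S}(\R^2)$. First I would pass to Fourier variables and set $f=\langle\xi_1\rangle^{s}\langle\tau_1+\xi_1^2\rangle^{c}|\widehat u|$, $g=\langle\xi_2\rangle^{\kappa}\langle\tau_2-\xi_2^3\rangle^{b}|\widehat v|$ and $h=\langle\xi\rangle^{-s}\langle\tau+\xi^2\rangle^{1-c'}|\widehat w|$, so that $\|f\|_{L^2}=\|u\|_{X^{s,c}}$, and likewise for $g,h$. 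Under the convolution constraints $\xi=\xi_1+\xi_2$, $\tau=\tau_1+\tau_2$, the claim reduces to the boundedness on $L^2\times L^2\times L^2$ of the form with multiplier
\[
M=\frac{\langle\xi\rangle^{s}}{\langle\xi_1\rangle^{s}\,\langle\xi_2\rangle^{\kappa}}\cdot\frac{1}{\langle\tau_1+\xi_1^2\rangle^{c}\,\langle\tau_2-\xi_2^3\rangle^{b}\,\langle\tau+\xi^2\rangle^{1-c'}}.
\]

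The decisive ingredient is the resonance identity
\[
(\tau+\xi^2)-(\tau_1+\xi_1^2)-(\tau_2-\xi_2^3)=2\xi_1\xi_2+\xi_2^2+\xi_2^3,
\]
which forces $\max\{\langle\tau+\xi^2\rangle,\langle\tau_1+\xi_1^2\rangle,\langle\tau_2-\xi_2^3\rangle\}\gtrsim\langle 2\xi_1\xi_2+\xi_2^2+\xi_2^3\rangle$. The cubic term $\xi_2^3$ in the KdV frequency is precisely what furnishes the derivative gain. I would partition the domain according to (i) which of the three modulation weights is dominant and (ii) the relative sizes of the frequencies, i.e.\ the high-high regime $|\xi_1|\sim|\xi_2|$, together with $|\xi_1|\gg|\xi_2|$ and $|\xi_1|\ll|\xi_2|$, and I would further separate the signs of $s$. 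In each region the symbol $\langle\xi\rangle^{s}\langle\xi_1\rangle^{-s}\langle\xi_2\rangle^{-\kappa}$ is dominated by a power of the largest modulation via the identity above, and the resulting convolution is closed by Cauchy--Schwarz after integrating out one temporal variable with the elementary bound $\int_{\R}\langle\tau-a\rangle^{-2\theta}\langle\tau-b\rangle^{-2\theta'}\,d\tau\lesssim\langle a-b\rangle^{-\theta''}$ for appropriate exponents.

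The hypotheses $s-\kappa\le 2$ (for $s\ge 0$) and $s+\kappa\ge -2$ (for $s<0$), together with $\kappa\ge -1$, are exactly the conditions that balance the derivative budget. The total usable gain is a power $\tfrac32$ coming from extracting $\langle\xi_2^3\rangle^{1/2}$ from the dominant modulation, augmented by a further $\tfrac12$ from a bilinear Strichartz–type refinement reflecting the transversal intersection of the Schr\"odinger parabola and the KdV cubic; this gives a net gain of $\langle\xi_2\rangle^{2-}$. The two binding cases are the low-high interaction ($|\xi_2|\gg|\xi_1|$, output frequency $\xi\sim\xi_2$), where the loss $\langle\xi_2\rangle^{s-\kappa}$ must be absorbed, forcing $s-\kappa\le 2$ when $s\ge0$; and the high-high interaction producing a low output frequency, where the loss $\langle\xi_1\rangle^{-s-\kappa}$ forces $s+\kappa\ge-2$ when $s<0$. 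The restriction $\kappa\ge-1$ controls the low-KdV-frequency loss $\langle\xi_2\rangle^{-\kappa}$ in the high-low regime, where the resonance degenerates to $\sim\xi_1\xi_2$.

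I expect the main obstacle to be the high-high region in which a \emph{Schr\"odinger} modulation ($\langle\tau_1+\xi_1^2\rangle$ or $\langle\tau+\xi^2\rangle$) is dominant: there the temporal integration is performed against a Schr\"odinger weight while the gain lives on the KdV frequency $\xi_2$, so the interplay of $\xi_1\xi_2$ and $\xi_2^3$ inside the resonance must be tracked carefully, and the crude calculus lemma must be replaced by the transversality/bilinear refinement to recover the full gain up to $2$. The remaining frequency regimes, and the region where no single modulation weight dominates, are handled by the same scheme with strictly more room. Since this estimate is proved in \cite{WU}, we omit the routine but lengthy verification of the individual cases.
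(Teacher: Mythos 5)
The paper offers no proof of this lemma at all: it is quoted verbatim from Wu \cite{WU}, with the single sentence ``The following results are obtained in \cite{WU}'' serving as justification. Your proposal therefore cannot be compared against an in-paper argument; what can be said is that your outline is a correct description of how such estimates are actually proved. The duality reduction to the trilinear form with the stated multiplier is right (the dual of $X^{s,c'-1}$ under the $L^2_{x,t}$ pairing is indeed $X^{-s,1-c'}$), the resonance identity $(\tau+\xi^2)-(\tau_1+\xi_1^2)-(\tau_2-\xi_2^3)=2\xi_1\xi_2+\xi_2^2+\xi_2^3$ is computed correctly, and your attribution of the three hypotheses to the three binding frequency regimes (low--high for $s-\kappa\le 2$, high--high with low output for $s+\kappa\ge -2$, high--low for $\kappa\ge -1$) matches the structure of the proofs in \cite{WU} and in Corcho--Linares \cite{CL}. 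The derivative-budget accounting (roughly $\frac32$ from the dominant modulation plus $\frac12$ from the measure/transversality refinement, for a total gain of $\langle\xi_2\rangle^{2-}$) is the right heuristic.

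That said, you should be clear that what you have written is a plan, not a proof: every quantitative step (which modulation dominates where, the precise calculus lemma exponents, the verification that the losses are absorbed in \emph{all} regions rather than just the two you flag as binding) is deferred to the citation. Since the paper itself does exactly this --- cites \cite{WU} and moves on --- your treatment is no less complete than the paper's, but it should be presented as a proof by citation accompanied by a sketch of the cited argument, not as an independent derivation.
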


\begin{lem}\label{lem-3}
Let $s>-\frac14$, $\kappa <4s$ and consider $\kappa-s<1$ when $s\geq 0$, $\kappa-2s<1$ when $s<0$. Also, take $b', c=\frac12+$. Then for any $u_1, u_2\in X^{s,c}$
\begin{equation}\label{eq2.8}
\|\partial_x(u_1\bar{u}_2)\|_{Y^{\kappa, b'-1}}\lesssim\|u_1\|_{X^{s,c}}\|u_2\|_{X^{s,c}}.
\end{equation}
\end{lem}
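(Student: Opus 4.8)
The plan is to reduce \eqref{eq2.8}, by Plancherel and duality, to a trilinear bound and then run a case analysis governed by the resonance relation. Set $f_j(\xi_j,\tau_j)=\langle\xi_j\rangle^{s}\langle\tau_j+\xi_j^2\rangle^{c}\,|\widehat{u_j}(\xi_j,\tau_j)|\ge0$, so that $\|f_j\|_{L^2}=\|u_j\|_{X^{s,c}}$, and abbreviate the modulation variables $\mu=\tau-\xi^3$ and $\sigma_j=\tau_j+\xi_j^2$. Then \eqref{eq2.8} is equivalent to
\[
I:=\int_{\Gamma}\frac{|\xi|\,\langle\xi\rangle^{\kappa}\,f_1 f_2\, g}{\langle\xi_1\rangle^{s}\langle\xi_2\rangle^{s}\,\langle\sigma_1\rangle^{c}\langle\sigma_2\rangle^{c}\,\langle\mu\rangle^{1-b'}}\ \lesssim\ \|f_1\|_{L^2}\|f_2\|_{L^2}\|g\|_{L^2}
\]
for every nonnegative $g\in L^2(\R^2)$, the integration running over $\Gamma=\{\xi=\xi_1-\xi_2,\ \tau=\tau_1-\tau_2\}$. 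The engine of the proof is the resonance identity
\[
\mu-\sigma_1+\sigma_2=-\,\xi\big(\xi^2+\xi_1+\xi_2\big)=:-R,
\]
valid on $\Gamma$, which forces $\max\{\langle\mu\rangle,\langle\sigma_1\rangle,\langle\sigma_2\rangle\}\gtrsim\langle R\rangle$.

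I would split $\Gamma$ into the three regions $A_0$, $A_1$, $A_2$ where the dominant modulation is $\langle\mu\rangle$, $\langle\sigma_1\rangle$, $\langle\sigma_2\rangle$ respectively, and in each region pair off, by Cauchy--Schwarz, the factor carrying that dominant weight: on $A_0$ with $g$, on $A_1$ with $f_1$, on $A_2$ with $f_2$. On $A_0$, for instance, one bounds $I_{A_0}$ by $\|g\|_{L^2}$ times the $L^2_{\xi,\tau}$ norm of the inner $(\xi_1,\tau_1)$-integral, and Cauchy--Schwarz reduces matters to the single pointwise bound
\[
\sup_{\xi,\tau}\ \frac{|\xi|^2\langle\xi\rangle^{2\kappa}}{\langle\mu\rangle^{2(1-b')}}\int_{A_0}\frac{d\xi_1\,d\tau_1}{\langle\xi_1\rangle^{2s}\langle\xi_2\rangle^{2s}\langle\sigma_1\rangle^{2c}\langle\sigma_2\rangle^{2c}}\ <\ \infty ,
\]
with symmetric reductions on $A_1,A_2$. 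In each case the two \emph{non}-dominant modulation weights are integrated out in their shared time variable by the elementary calculus estimate
\[
\int_{\R}\frac{dx}{\langle x-a\rangle^{\beta}\langle x-b\rangle^{\beta'}}\ \lesssim\ \langle a-b\rangle^{-\min(\beta,\beta',\,\beta+\beta'-1)}\qquad(\beta+\beta'>1),
\]
with a harmless logarithm in the borderline case. The resonance identity shows the resulting difference $|a-b|$ equals the dominant modulation shifted by $R$, so that, together with $\langle\text{dominant}\rangle\gtrsim\langle R\rangle$, one is left with a purely spatial-frequency integral in which $\langle R\rangle^{-1}$ is the only smoothing available; the region constraint $\langle\text{dominant}\rangle\gtrsim\langle R\rangle$ also localizes the frequency integration to a set of finite measure, which is what makes these integrals convergent.

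The remaining one-dimensional frequency integrals are then estimated according to the interaction type. In the non-resonant high--low regime, say $|\xi_1|\sim|\xi|$ with $|\xi_2|=O(1)$, the resonance is of size $|\xi|^3$, modulation decay is plentiful, and the balance of derivatives produces the constraint $\kappa-s<1$ for $s\ge0$, modified to $\kappa-2s<1$ for $s<0$, reflecting that the low-frequency weight $\langle\xi_2\rangle^{-2s}$ ceases to help once $s<0$. The genuinely delicate case, and the main obstacle, is the \emph{resonant} high--high interaction $\xi^2+\xi_1+\xi_2\approx0$, which forces $|\xi_1|\sim|\xi_2|\sim|\xi|^2\gg|\xi|$: here $R\approx0$, so no modulation gain is available and the whole factor $\langle\xi\rangle^{2\kappa}|\xi|^2$ must be absorbed by $\langle\xi_1\rangle^{-2s}\langle\xi_2\rangle^{-2s}\sim\langle\xi\rangle^{-4s}$ together with the small measure of the resonant set; tracking these exponents is exactly what yields the sharp condition $\kappa<4s$. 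Finally, convergence of the $\langle\xi_j\rangle^{-2s}$-weighted integrals when $s<0$—equivalently, the classical $-1/4$ threshold for bilinear interactions of two Schr\"odinger waves—is what forces $s>-1/4$. Away from the resonant set everything reduces to the calculus estimate above and Cauchy--Schwarz, so the resonant high--high region is the only place where all of the hypotheses are simultaneously sharp.
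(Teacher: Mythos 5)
First, a point of comparison: the paper does not prove this lemma at all --- it is quoted from Wu's paper \cite{WU} (``The following results are obtained in \cite{WU}''), so the relevant benchmark is the proof there, which does follow the duality/resonance-decomposition scheme you outline. Your setup is correct: the reduction to a trilinear form, the resonance identity $\mu-\sigma_1+\sigma_2=-\xi(\xi^2+\xi_1+\xi_2)$, and the identification of the resonant high--high regime $\xi^2+\xi_1+\xi_2\approx0$, i.e.\ $|\xi_1|\sim|\xi_2|\sim|\xi|^2$, as the interaction responsible for $\kappa<4s$.

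The gap is in how you propose to close that regime. On the resonant set all three modulations can simultaneously be $O(1)$, so part of it lies in your region $A_0$, where your prescription is to pair the $\langle\mu\rangle$-weight with $g$ and reduce to the displayed pointwise bound
\begin{equation*}
\sup_{\xi,\tau}\ \frac{|\xi|^2\langle\xi\rangle^{2\kappa}}{\langle\mu\rangle^{2(1-b')}}\int\frac{d\xi_1\,d\tau_1}{\langle\xi_1\rangle^{2s}\langle\xi_2\rangle^{2s}\langle\sigma_1\rangle^{2c}\langle\sigma_2\rangle^{2c}}\ <\ \infty .
\end{equation*}
This supremum is \emph{not} finite under the stated hypotheses. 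After the $\tau_1$-integration one is left with an $\xi_1$-integral in which the fiber $\{|R|\lesssim1\}$ has measure $\sim|\xi|^{-1}$ (since $\partial R/\partial\xi_1=2\xi$), while on it $\langle\xi_1\rangle^{-2s}\langle\xi_2\rangle^{-2s}\sim\langle\xi\rangle^{-8s}$ --- not $\langle\xi\rangle^{-4s}$ as you wrote; you dropped a factor of two in the exponent. The supremum is therefore of size $|\xi|^{2\kappa+1-8s}$, which is bounded only for $\kappa\le 4s-\tfrac12$ (for instance, at $s=0$, $\kappa=-\tfrac14$, $\mu=0$ it is $\sim|\xi|^{1/2}$). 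The full range $\kappa<4s$ is recovered only if, on the resonant set, one dualizes instead in the output variables $(\xi,\tau)$ --- pairing with $f_1$ or $f_2$ --- where the fiber measure drops to $\sim|\xi|^{-2}$ because $\partial R/\partial\xi=3\xi^2-2\xi+2\xi_1\sim2\xi^2$ there; equivalently, one must add an almost-orthogonal decomposition of $\xi_1$ into intervals of length comparable to $\langle\mu\rangle/|\xi|$ to recoup the loss in the crude Cauchy--Schwarz. Since this half derivative is exactly what separates the range $\kappa<4s$ of \cite{WU} from the earlier $\kappa\le 2s-\tfrac12$-type conditions of \cite{CL}, the missing step is the substance of the lemma rather than a technicality; the rest of your outline (the calculus convolution estimate, the non-resonant high--low count giving $\kappa-s<1$, resp.\ $\kappa-2s<1$, and the $s>-\tfrac14$ threshold) is the standard and correct skeleton.
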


\subsection{Local and global theory for (2+1)-component NLS-gKdV}
Throughout this section we assume the following conditions on the indices of the nonlinearities
\begin{equation}\label{3LSI-parameter-conditions}
q_j>0\; (j=1,2) \quad \text{and} \quad p=\tfrac{n_1}{n_2} \ge 1\;\,  \text{with}\;\,  n_1, n_2\in \mathbb{N}\;\, \text{and}\;\, n_2\;\, \text{odd}.
\end{equation}

Using Duhamel's formula, we consider the IVP associated to the system \eqref{3LSI} in the equivalent system of integral equations
\begin{equation}\label{equiv-syst-a}
\begin{cases}
u_j(t)=S(t){u_j}_0+i\displaystyle \int_0^tS(t-t')\left[\alpha_j u_jv+\gamma_j|u_j|^{q_j}u\right](t')dt'\;\; (j=1,2),\medskip\\
v(t) = V(t)v_0 -\displaystyle\int_0^t V(t-t')\left[\beta v^p\partial_xv +\partial_x\big(\alpha_1|u_1|^2+\alpha_2|u_2|^2\big) \right](t')dt'.
\end{cases}
\end{equation}

Our goal is to solve \eqref{equiv-syst-a} by applying the contraction mapping principle in a suitable subspace of the continuous functions
$\mathcal{C}\big( [0, T];\; H^1 \times H^1 \times H^1\big)$. The  main tool in the proof is the use of a localized maximal function, described in Lemma \ref{localized-maximal-function-KdV}, in order to estimate the non-homogeneous term
$$\displaystyle\int_0^t V(t-t')g(x,t')dt',$$ with a good control for  the terms $\partial_x(|u_j|^2)$.

\begin{remark}
	We note that our proof include and extend  the results given by Guo and Miao in \cite{Guo}, for energy regularity, in the context of the classical (1+1)-component NLS-KdV system ($p=1$ and $\gamma_j=0$). In fact, here we contemplate fractional powers for KdV component and also we emphasize that our fixed-point procedure is developed in a  different
	environment than the one used in  \cite{Guo}.
\end{remark}

\begin{proof}[\bf{Proof of Theorem \ref{Thm1} (local theory)}]
Let $\mu >  3/4$.  For positive numbers  $T$,  $M_1, M_2$ and $M$, to be chosen later,  we define a function space
$$\mathcal{Z}_T=\Big\{(u_1, u_2, v)\in \mathcal{C}\big([0,T];\, H^1\times H^1\times H^1 \big);\;  \|u_j\|_{\mathcal{S}_T}\le M_j \; \text{and}\; \|v\|_{\mathcal{K}_T} \le M\Big\},$$
where
\begin{equation}
\|u_j\|_{\mathcal{S}_T}:= \|J^1_xu_j\|_{L^{\infty}_TL^2_x} + (1+T)^{-\mu}\|u_j\|_{{L^2_xL^{\infty}_T}},\quad j=1,2,
\end{equation}
and
\begin{equation}
\|v\|_{\mathcal{K}_T}:= \|J^1_xv\|_{L^{\infty}_TL^2_x} + (1+T)^{-\mu}\|v\|_{{L^2_xL^{\infty}_T}}.
\end{equation}
We equip  the space $\mathcal{Z}_T$ with  the norm defined by
\begin{equation}\label{Z-norm}
\|(u_1, u_2, v)\|_{\mathcal{Z}_T}:= \|u_1\|_{\mathcal{S}_T} + \|u_2\|_{\mathcal{S}_T}+ \|v\|_{\mathcal{K}_T}.
\end{equation}

\noindent It is easy to check that $\mathcal{Z}_T$ is a complete metric space with respect to the norm defined in \eqref{Z-norm}.

\smallskip

\noindent For $\Delta:=(u_1, u_2, v)\in \mathcal{Z}_T$ and with $j=1,2$, we define the operators
\begin{equation}\label{Appl-1}
\begin{cases}
\Phi_j(\Delta) = S(t){u_j}_{0}+i\displaystyle \int_0^tS(t-t')\left[\alpha_j (u_jv)+\gamma_j|u_j|^{q_j}u_j\right]\,dt',\medskip\\
\Psi(\Delta) = V(t)v_0 -\!\!\displaystyle\int_0^t V(t-t')\left[\beta v^p\partial_xv +\partial_x\big(\alpha_1|u_1|^2+\alpha_2|u_2|^2\big)\right]\,dt'.
\end{cases}
\end{equation}

{\noindent \underline{Claim:}}  $\big(\Phi_1, \Phi_2, \Psi\big)(\mathcal{Z}_T) \subset \mathcal{Z}_T$ for a suitable positive time $T$.

\smallskip

In what follows we consider $\Delta \in \mathcal{Z}_T$.

\smallskip

\noindent{\bf{Estimates for $\boldsymbol \Phi_j$}.} Using \eqref{Estimate-Retarded-Conv},  Lemma \ref{SEfeects-NLS} with
$s=1$, the algebra structure of $H^1(\mathbb{R})$ and  H\"older's inequality it follows that
\begin{equation}\label{lwp-Estimate-NLS-a}
\begin{split}
\|\Phi_j(\Delta)\|_{\mathcal{S}_T}& =\|J^1_x\Phi_j(\Delta)\|_{L^{\infty}_TL^2_x} + (1+T)^{-\mu}\|\Phi_j(\Delta)\|_{L^2_xL^{\infty}_T}\\
& \lesssim  \|{u_j}_0\|_{H^1}+ \|J^1_x(u_jv)\|_{L^1_TL^2_x} + \|J^1_x(|u_j|^{q_j}u_j)\big \|_{L^1_TL^2_x}\\
& \lesssim \|{u_j}_0\|_{H^1} + T\,\|u_j\|_{L^{\infty}_TH^1_x}\|v\|_{L^{\infty}_TH^1_x}
+ \|J^1_x(|u_j|^{q_j}u_j)\big \|_{L^1_TL^2_x}.
\end{split}
\end{equation}

On the other hand, using the Gagliardo-Nirenberg inequality from Lemma \ref{GN-inequality}, one can obtain
\begin{equation}\label{lwp-Estimate-NLS-b}
\begin{split}
 \|J^1_x(|u_j|^{q_j}u_j)\big \|_{L^1_TL^2_x}&= \int_0^T\big \|\,|u_j|^{q_j}u_j \big \|_{L^2_x}\,dt + \int_0^T\big \|\partial_x (|u_j|^{q_j}u_j)\big \|_{L^2_x}\,dt\\
& \le \int_0^T\|u_j\|^{q_j+1}_{L^{2(q_j+1)}_x}\,dt+ (q_j+1)\int_0^T\|\,|u_j|^{q_j}\partial_xu_j\|_{L^2_x}\,dt\\
&\lesssim  T\,\|u_j\|^{q_j+1}_{L^{\infty}_TH^1_x} + (q_j+1)\int_0^T \|u_j\|^{q_j}_{L^{\infty}_x}\|\partial_xu_j\|_{L^2_x}\,dt\\
&\lesssim T\,\|u_j\|^{q_j+1}_{L^{\infty}_TH^1_x}.
\end{split}
\end{equation}
Thus, combining \eqref{lwp-Estimate-NLS-a} and \eqref{lwp-Estimate-NLS-b} there are positive constants $C_0$ and $C_{j}$ such that
\begin{equation}\label{lwp-Estimate-NLS-c}
\begin{split}
\|\Phi_j(\Delta)\|_{\mathcal{S}_T}&\le C_0\|{u_j}_0\|_{H^1} + C_j T\,\Big(\|u_j\|_{L^{\infty}_TH^1_x}\|v\|_{L^{\infty}_TH^1_x} + \|u_j\|^{q_j+1}_{L^{\infty}_TH^1_x}\Big)\\
&\le  C_0\|{u_j}_0\|_{H^1} + C_j\,T\,\Big(\|u_j\|_{\mathcal{S}_T}\|v\|_{\mathcal{K}_T} + \|u_j\|_{\mathcal{S}_T}^{q_j+1}\Big)\\
&\le  C_0\|{u_j}_0\|_{H^1} + C_j\,T\,\big(M_jM + M_j^{q_j+1}\big),
\end{split}
\end{equation}
with $C_j$ depending on the parameters $\alpha_j, \gamma_j$ and $q_j$.

\medskip

\noindent{\bf{Estimates for $\boldsymbol \Psi$}.} The estimate \eqref{Estimate-Retarded-Conv} combined with the H\"older's inequality and the Sobolev embedding yields
\begin{equation}\label{lwp-Estimate-KdV-a}
\begin{split}
\|\Psi(\Delta)\|_{L^{\infty}_TL^2_x}& \lesssim  \|v_0\|_{L^2} + \big\|\partial_x(|u_1|^2) + \partial_x(|u_2|^2) + v^p\partial_x v \big\|_{L^1_TL^2_x}\\
&\lesssim \|v_0\|_{L^2} + T\, \Big( \sum\limits_{j=1}^{2}\|u_j\|_{L^{\infty}_TL^{\infty}_x} \|\partial_x u_j\|_{L^{\infty}_TL^2_x}+
\|v^p\|_{L^{\infty}_TL^{\infty}_x} \|\partial_x v\|_{L^{\infty}_TL^2_x}\Big)\\
& \lesssim \|v_0\|_{L^2} + T\, \Big( \|u_1\|^2_{L^{\infty}_TH^1_x}+ \|u_2\|^2_{L^{\infty}_TH^1_x} +\|v\|^{p+1}_{L^{\infty}_TH^1_x} \Big)\\
&\lesssim\|v_0\|_{L^2} + T\, \Big(\|u_1\|^2_{\mathcal{S}_T} + \|u_2\|^2_{\mathcal{S}_T} + \|v\|^{p+1}_{\mathcal{K}_T}\Big)
\end{split}
\end{equation}

In order to estimate the derivative of $\Psi(\vec{w})$ we use Lemma \ref{SEfeects-KdV}-(c), to obtain
\begin{equation}\label{lwp-Estimate-KdV-b}
\big\|\partial_x\Psi(\Delta)\big\|_{L^{\infty}_TL^2_x}\lesssim \|v_0\|_{H^1} +  \big\|v^p\partial_x v+ \partial_x(|u_1|^2) +\partial_x(|u_2|^2)\big\|_{L^1_xL^2_T}.
\end{equation}
\medskip
Furthermore, using H\"older's inequality, Fubini's Theorem and Sobolev embedding, we get
\begin{equation}\label{lwp-Estimate-KdV-c}
\begin{split}
\big\|v^p\partial_x v\big\|_{L^1_xL^2_T}&= \|v v^{p-1}\partial_xv\|_{L^1_xL^2_T}\\
&\le \|v\|_{L^2_xL^{\infty}_T} \|v^{p-1}\partial_xv\|_{L^2_xL^2_T}\\
&\le \|v\|_{L^2_xL^{\infty}_T} \|v^{p-1}\|_{L^{\infty}_TL^{\infty}_x}\|\partial_xv\|_{L^2_TL^2_x}\\
&\le T^{1/2}\|v\|_{L^2_xL^{\infty}_T} \|v^{p-1}\|_{L^{\infty}_TL^{\infty}_x}\|\partial_xv\|_{L^{\infty}_TL^2_x}\\
&\le T^{1/2}\|v\|_{L^2_xL^{\infty}_T}\|v\|^p_{L^{\infty}_TH^1_x}
\end{split}
\end{equation}
and
\begin{equation}\label{lwp-Estimate-KdV-d}
\begin{split}
\big\|\partial_x |u_j|^2\big\|_{L^1_xL^2_T}& \le  \|\bar{u_j}\partial_xu_j\|_{L^1_xL^2_T} + \|u\partial_x\bar{u_j}\|_{L^1_xL^2_T}\\
&\le 2 \|u_j\|_{L^2_xL^{\infty}_T} \|\partial_xu_j\|_{L^2_xL^2_T}\\
&\le 2 T^{1/2}\|u_j\|_{L^2_xL^{\infty}_T}\|u_j\|_{L^{\infty}_TH^1_x}.
\end{split}
\end{equation}

Combining \eqref{lwp-Estimate-KdV-b}, \eqref{lwp-Estimate-KdV-c} and \eqref{lwp-Estimate-KdV-d}, we have
\begin{equation}\label{lwp-Estimate-KdV-e}
\big\|\partial_x\Psi(\Delta)\big\|_{L^{\infty}_TL^2_x}
\lesssim \|v_0\|_{H^1} +  T^{1/2}\Big(\|u_1\|^2_{\mathcal{S}_T} + \|u_2\|^2_{\mathcal{S}_T} + \|v\|^{p+1}_{\mathcal{K}_T} \Big).\\
\end{equation}	
		
Now, we proceed to estimate the more delicate term, the maximal function $\big\|\Psi(\Delta)\big\|_{L^2_xL^{\infty}_T}$. For this purpose we put
$$g(\Delta)=\beta v^p\partial_x v+ \alpha_1\partial_x(|u_1|^2) + \alpha_2\partial_x(|u_2|^2.)$$
From Lemma \ref{SEfeects-KdV}-(a) we have that
\begin{equation}\label{lwp-Estimate-KdV-f}
\begin{split}
(1+T)^{-\mu}\big\|\Psi(\Delta)\big\|_{L^2_xL^{\infty}_T}&\lesssim \|v_0\|_{H^1} +
(1+T)^{-\mu}\bigg\|\displaystyle \int_0^tV(t-t')g(\Delta)\,dt'\bigg\|_{L^2_xL^{\infty}_T}\\
&=\|v_0\|_{H^1} + (1+T)^{-\mu}G(\Delta).
\end{split}
\end{equation}
Splitting $g$ by the projection operators and using the Proposition \ref{low-high-operators} and the  Lemma \ref{localized-maximal-function-KdV}, we get
\begin{equation}\label{lwp-Estimate-KdV-g}
\begin{split}
G(\Delta)&\le \bigg\|\displaystyle \int_0^tV(t-t')P_{\boldsymbol{l}}g(\Delta)\,dt'\bigg\|_{L^2_xL^{\infty}_T}
 + \quad  \bigg\|\displaystyle \int_0^tV(t-t')P_{\boldsymbol{h}}g(\Delta)\,dt'\bigg\|_{L^2_xL^{\infty}_T}\\
 &\lesssim \|J^1_xP_{\boldsymbol{l}}g(\Delta)\|_{_{L^1_TL^2_x}} +(1+T)^{\mu}\|P_{\boldsymbol{h}}g(\Delta)\|_{_{L^1_xL^2_T}}\\
 &\lesssim  \|g(\Delta)\|_{_{L^1_TL^2_x}} + (1+T)^{\mu}\|g(\Delta)\|_{_{L^1_xL^2_T}}.
\end{split}
\end{equation}

On the other hand
\begin{equation*}\label{lwp-Estimate-KdV-h}
\begin{split}
\|g(\Delta)\|_{_{L^1_TL^2_x}}&\le \big\|\beta v^p\partial_x v+ \alpha_1\partial_x(|u_1|^2) + \alpha_2\partial_x(|u_2|^2)\big\|_{_{L^1_TL^2_x}}\\
&\lesssim T\,\bigg(\, \|v^p\|_{L^{\infty}_TL^{\infty}_x} \|\partial_xv\|_{L^{\infty}_TL^2_x} +
\sum\limits_{j=1}^{2}\|u_j\|_{L^{\infty}_TL^{\infty}_x} \|\partial_xu_j\|_{L^{\infty}_TL^2_x} \,\bigg)\\
&\lesssim T\,\bigg( \|v\|^{p+1}_{L^{\infty}_TH^1_x} + \|u_1\|^2_{L^{\infty}_TH^1_x}+ \|u_2\|^2_{L^{\infty}_TH^1_x}\bigg)\\
&\lesssim  T\, \Big(\|u_1\|^2_{\mathcal{S}_T} + \|u_2\|^2_{\mathcal{S}_T} + \|v\|^{p+1}_{\mathcal{K}_T}\Big).
\end{split}
\end{equation*}
The same arguments as in \eqref{lwp-Estimate-KdV-c}-\eqref{lwp-Estimate-KdV-d} yield
\begin{equation*}\label{lwp-Estimate-KdV-i}
\|g(\Delta)\|_{_{L^1_xL^2_T}}\le T^{1/2}\, \Big(\|u_1\|^2_{\mathcal{S}_T} + \|u_2\|^2_{\mathcal{S}_T} + \|v\|^{p+1}_{\mathcal{K}_T} \Big).
\end{equation*}
Hence,
\begin{equation}\label{lwp-Estimate-KdV-j}
\|G(\Delta)\|_{_{L^1_xL^2_T}}\le \Big(T+(1+T)^{\mu}T^{1/2}\Big)\, \Big(\|u_1\|^2_{\mathcal{S}_T} + \|u_2\|^2_{\mathcal{S}_T} + \|v\|^{p+1}_{\mathcal{K}_T} \Big).
\end{equation}

Combining \eqref{lwp-Estimate-KdV-f}, \eqref{lwp-Estimate-KdV-g} and \eqref{lwp-Estimate-KdV-j}, one obtains
\begin{equation}\label{lwp-Estimate-KdV-k}
\begin{split}
(1+T)^{-\mu}\big\|\Psi(\Delta)\big\|_{L^2_xL^{\infty}_T}&\lesssim \|v_0\|_{H^1} + \big(T +T^{1/2}\big)\, \Big(\|u_1\|^2_{\mathcal{S}_T} + \|u_2\|^2_{\mathcal{S}_T} + \|v\|^{p+1}_{\mathcal{K}_T} \Big).
\end{split}
\end{equation}

\medskip
From \eqref{lwp-Estimate-KdV-a}, \eqref{lwp-Estimate-KdV-e} and \eqref{lwp-Estimate-KdV-k} there are  positive constants $\tilde{C}_0$ and $\tilde{C}$, where $\tilde{C}$ depends on the parameters $\beta$ and $\alpha_j$ such that
\begin{equation}\label{lwp-Estimate-KdV-l}
\begin{split}
\|\Psi(\Delta)\|_{\mathcal{K}_T} &\le \tilde{C}_0\|v_0\|_{H^1} + \tilde{C}\big(T +T^{1/2}\big)\, \Big(\|u_1\|^2_{\mathcal{S}_T} + \|u_2\|^2_{\mathcal{S}_T} + \|v\|^{p+1}_{\mathcal{K}_T} \Big)\\
&\le \tilde{C}_0\|v_0\|_{H^1} + \tilde{C}\big(T +T^{1/2}\big)\, \Big(M_1^2 + M_2^2 + M^{p+1}\Big)
\end{split}
\end{equation}

\medskip
Now we put
\begin{equation}
M_j=2C_0\|{u_j}_0\|_{H^1}\quad  \text{and} \quad M=2\tilde{C}_0\|v_0\|_{H^1}
\end{equation}
and choose $T>0$ satisfying the conditions
\begin{equation}\label{Time-Existence}
\begin{cases}
T \le  \frac{M_j}{2C_j}\big(M_jM + M_j^{q_j+1}\big)^{-1},\; j=1,2,\medskip\\
T +T^{1/2} \le \frac{M}{2\tilde{C}}\Big(M_1^2 + M_2^2 + M^{p+1}\Big)^{-1},
\end{cases}
\end{equation}
which guarantee the inclusion $\big(\Phi_1, \Phi_2, \Psi\big)(\mathcal{Z}_T) \subset \mathcal{Z}_T$.

\medskip
 In an analogous manner, it is not difficult to prove that the application $(\Phi_1, \Phi_2, \Psi)$ is a contraction on $\mathcal{Z}_T$. Therefore, from contraction mapping principle we conclude that $(\Phi_1, \Phi_2, \Psi)$ has a unique fixed point in $\mathcal{Z}_T$ that is the solution to the integral equation \eqref{Appl-1} in the time interval $[0, T]$, with $T$
 maybe less than the one chosen in \eqref{Time-Existence}. The rest of the proof follows standard arguments, so we omit the details.
\end{proof}

\begin{proof}[\bf{Proof of Theorem \ref{Thm1} (global theory)}] Let  $1\leq p<\frac43$ and $q_j$ satisfying

\begin{equation}
0< q_j <
\begin{cases}
\;4       & \text{if}\quad \tau_j > 0,\\
\infty  & \text{if}\quad \tau_j \le 0,
\end{cases}	
\end{equation}
for $j=1,2.$ For the system \eqref{3LSI}, as recorded in \eqref{con-1}, \eqref{con-2} and \eqref{Edef}, we have the following conserved quantities
\begin{equation}\label{con-1j}
Q(u_j)(t) =\|u_j(t)\|_{L^2}^2 = \|{u_j}_0\|_{L^2}^2=:{Q_j}_0,\;\;  j=1,2,
\end{equation}
\begin{equation}\label{con-21}
H(\Delta)(t) =H(\Delta_0)=:H_0
\end{equation}
and
\begin{equation}\label{con-31}
E(\Delta)(t) =E(\Delta_0)=:E_0,
\end{equation}
where  $\Delta=(u_1, u_2, v)$ and $\Delta_0=({u_1}_0, {u_2}_0, v_0)$.

We use these conserved quantities to obtain an {\em a priori} estimate in the energy space $H^1\times H^1\times H^1$.
From \eqref{con-31}, using the definition of $E$ in \eqref{Edef}, we obtain
\begin{equation}\label{con-31a}
E_0 = \sum_{j=1}^2\|\partial_xu_j\|_{L^2_x}^2+\|\partial_xv\|_{L^2_x}^2 -\tau \int v^{p+2}dx -\sum_{j=1}^2\int \Big[\alpha_jv |u_j|^2dx +\tau_j|u_j|^{q_j+2}\Big]dx.
\end{equation}

Using the triangle and Cauchy-Schwartz inequalities, we get
\begin{equation}\label{con-31b}
\begin{split}
y(t)&:=\sum_{j=1}^2\|\partial_xu_j\|_{L^2_x}^2  +\|\partial_xv\|_{L^2_x}^2\\
&=E_0+ \tau \int v^{p+2}dx + \sum_{j=1}^2\int \alpha_j v |u_j|^2dx +\sum_{j=1}^2\tau_j\|u_j\|_{L^{q_j+2}_x}^{q_j+2}.
\end{split}
\end{equation}

Notice that if  $\tau_j\le 0$ for some $j=1,2$, then the corresponding term $\displaystyle \tau_j\|u_j\|_{L^{q_j+2}_x}^{q_j+2}$  is negative. In this case  we need not estimate this term, because its contribution does not increase the right hand  of \eqref{con-31b}. So, we only need  to consider the condition $0< q_j< \infty$, coming from the local theory.  Hence, we just consider the case $\tau_j>0$ for $j=1,2$.  From the definition of $y(t)$, we have
\begin{equation}\label{con-31c}
\begin{split}
y(t)&\leq |E_0| +|\tau|\,\|v\|_{L^{p+2}_x}^{p+2}+\sum_{j=1}^2|\alpha_j|\,\big\|v\, |u_j|^2\big\|_{L^1_x}+ \sum_{j=1}^2\tau_j\|u_j\|_{L^{q_j+2}_x}^{q_j+2}\\
&\lesssim|E_0| +\|v\|_{L^{p+2}_x}^{p+2} + \sum_{j=1}^2\|v\|_{L^2_x} \|u_j\|_{L^4_x}^2+\sum_{j=1}^2\|u_j\|_{L^{q_j+2}_x}^{q_j+2}.
\end{split}
\end{equation}

An use of Young's inequality in \eqref{con-31c}, yields
\begin{equation}\label{con-31d}
y(t)\leq C\bigg(|E_0| + \|v\|_{L^{p+2}_x}^{p+2} + \|v\|_{L^2_x}^2 + \sum_{j=1}^2\|u_j\|_{L^4_x}^4 + \sum_{j=1}^2\|u_j\|_{L^{q_j+2}_x}^{q_j+2}\bigg).
\end{equation}

Now, we use Gagliardo-Nirenberg and Young's inequalities (taking into account that the final exponent  for  $\|\partial_x v\|_{L^2_x}$ cannot exceed 2), to obtain
\begin{equation}\label{con-4a}
\|v\|_{L^{p+2}}^{p+2}\lesssim \|\partial_xv\|_{L^2}^{\frac{p}2}\|v\|_{L^2}^{\frac{p+4}2} \le
\frac1{2C} \|\partial_xv\|_{L^2}^2 + C_1\|v\|_{L^2}^{2\frac{4+p}{4-p}},
\end{equation}
where appears the restriction $0 < p <4$.

Similarly, we can obtain
\begin{equation}\label{con-4b}
\|u_j\|_{L^{q_j+2}}^{q_j+2}\leq \frac{1}{16C}\|\partial_xu_j\|_{L^2}^2 + C_2\|u_j\|_{L^2}^{2\frac{4+q_j}{4-q_j}}
= \frac{1}{16C}\|\partial_xu_j\|_{L^2}^2 + C_2{Q_j}_0^{\frac{4+q_j}{4-q_j}},
\end{equation}
with $0 < q_j <4$, and also
\begin{equation}\label{con-4c}
\begin{split}
\|u_j\|_{L^4}^4&\lesssim \|\partial_xu_j\|_{L^2}\|u_j\|_{L^2}^3\\
&\leq \frac{1}{16C}\|\partial_xu_j\|_{L^2}^2 + C_3\|u_j\|_{L^2_x}^6\\
&=\frac{1}{16C}\|\partial_xu_j\|_{L^2}^2 + C_3{Q_j}^3_0,
\end{split}
\end{equation}
for $j=1,2$.

On the other hand, from \eqref{con-21}, using definition of $H(\Delta)$ in \eqref{con-2}, one gets after applying Triangle and Cauchy-Schwartz inequalities that
\begin{equation}\label{con-21a}
\begin{split}
\|v\|_{L^2_x}^2&\leq |H_0|+2 \sum_{j=1}^2\|u_j\|_{L^2_x}\|\partial_xu_j\|_{L^2_x}\\
&\leq |H_0|+ \sum_{j=1}^2\Big[\frac{1}{16C}\|\partial_xu_j\|_{L^2_x}^2 + C_4\|u_j\|_{L^2_x}^2 \Big]\\
&=|H_0|+ \sum_{j=1}^2\Big[\frac{1}{16C}\|\partial_xu_j\|_{L^2}^2 + C_4{Q_j}_0\Big].
\end{split}
\end{equation}

Inserting the first inequality in \eqref{con-21a} into \eqref{con-4a}, one gets
\begin{equation}\label{con-4aa}
\begin{split}
\|v\|_{L^{p+2}}^{p+2}
&\leq \frac{1}{2C}\|\partial_xv\|_{L^2}^2 + C_1 \bigg(|H_0|+2 \sum_{j=1}^2\|u_j\|_{L^2_x}\|\partial_xu_j\|_{L^2_x}\bigg)^{\frac{4+p}{4-p}}\\
&\leq \frac{1}{2C}\|\partial_xv\|_{L^2}^2  + C_5|H_0|^{\frac{4+p}{4-p}}+ C_5\sum_{j=1}^2\|u_j\|_{L^2}^{\frac{4+p}{4-p}}\|\partial_xu_j\|_{L^2}^{\frac{4+p}{4-p}}\\
&\leq \frac{1}{2C}\|\partial_xv\|_{L^2}^2 + C_5|H_0|^{\frac{4+p}{4-p}}+ \sum_{j=1}^2\bigg[C_6\|u_j\|_{L^2}^{2\frac{4+p}{4-3p}}+\frac{1}{16C}\|\partial_xu_j\|_{L^2}^{2}\bigg]\\
&=\frac{1}{2C}\|\partial_xv\|_{L^2}^2 +C_5|H_0|^{\frac{4+p}{4-p}}+\sum_{j=1}^2\bigg[C_6{Q_j}_0^{\frac{4+p}{4-3p}}+\frac{1}{16C}\|\partial_xu_j\|_{L^2}^{2}\bigg],
\end{split}
\end{equation}
where we have used Young's inequality in the  third estimate  with the condition  $2\frac{4-p}{4+p}>1$, and this implies   $0< p < 4/3$. This condition combined with the restriction of the local theory gives the necessary restriction  $1\le p < 4/3$.

Now, we use \eqref{con-4b}, \eqref{con-4c} and \eqref{con-4aa} in \eqref{con-31d} and \eqref{con-21a}, to obtain
\begin{equation*}\label{con-5}
 y(t)+\|v\|^2_{L^2}\leq \frac12 y(t) + C_7\bigg( |E_0| +|H_0|+|H_0|^{\frac{4+p}{4-p}} +\sum_{j=1}^2\Big[ {Q_j}_0 +{Q_j}_0^3+{Q_j}_0^{\frac{4+q_j}{4-q_j}}+{Q_j}_0^{\frac{4+p}{4-3p}} \Big] \bigg)
\end{equation*}
and consequently,
\begin{equation}\label{con-5a}
y(t)+\|v\|^2_{L^2}\leq \tilde{C}\bigg( |E_0| +|H_0|+|H_0|^{\frac{4+p}{4-p}} +\sum_{j=1}^2\Big[ {Q_j}_0 +{Q_j}_0^3+{Q_j}_0^{\frac{4+q_j}{4-q_j}}+{Q_j}_0^{\frac{4+p}{4-3p}} \Big] \bigg)
\end{equation}

From \eqref{con-5a}, using similar estimates to those used in the previous process and \eqref{con-1j},  we obtain the following {\em a priori} estimate:
\begin{equation}\label{ap-1}
\|u_1(t)\|_{H^1}^2+\|u_2(t)\|_{H^1}^2+\|v(t)\|_{H^1}^2\leq f(\|{u_1}_0\|_{H^1}, \|{u_2}_0\|_{H^1},\|v_0\|_{H^1}).
\end{equation}

The estimate \eqref{ap-1} can be used to iterate the local existence argument to a prove the existence of the solution in any time interval $[0,T]$, for arbitrary $T>0$.
\end{proof}
\subsection{Local and global theory for the (1+2)-component NLS-gKdV}

In this subsection, we provide a proof of the well-posedness result for the (1+2)-component NLS-gKdV system \eqref{3LSINKK}. Here, we assume
\begin{equation}\label{3LSINKK-parameter-conditions}
q>0 \quad \text{and} \quad p_j=\tfrac{n_1}{n_2} \ge 1\;\,  \text{with}\;\,  n_1, n_2\in \mathbb{N}\;\, \text{and}\;\, n_2\;\, \text{odd}, \; (j=1,2).
\end{equation}

\begin{proof}[\bf{Proof of Theorem \ref{Thm2}}] The proof of  Theorem \ref{Thm2} is very similar to that of Theorem~\ref{Thm1}. In  this case, to obtain the local well-posedness result  in the framework of the proof of Theorem~\ref{Thm1},  we define
\begin{equation}
M=2C_0\|{u}_0\|_{H^1}\quad  \text{and} \quad M_j=2\tilde{C}_0\|{v_j}_0\|_{H^1}, \; j=1,2
\end{equation}
and choose $T>0$ satisfying the conditions
\begin{equation}\label{Time-Existence-2}
\begin{cases}
T \le  \frac{M}{2C_1}\big(M(M_1+M_2) + M^{q+1}\big)^{-1},\medskip\\
T +T^{1/2} \le \frac{M_j}{2\tilde{C_j}}\Big(M^2 + M_j^{p_j+1}\Big)^{-1},
\end{cases}
\end{equation}
to  perform the contraction mapping argument in  a function space
$$\mathcal{Z}_T=\Big\{(u, v_1, v_2)\in C\big([0,T];\, H^1\times H^1\times H^1 \big);\;  \|u\|_{\mathcal{S}_T}\le M \; \text{and}\; \|v_j\|_{\mathcal{K}_T} \le M_j\Big\},$$
where
\begin{equation}
\|u\|_{\mathcal{S}_T}:= \|J^1_xu\|_{L^{\infty}_TL^2_x} + (1+T)^{-\mu}\|u\|_{{L^2_xL^{\infty}_T}}\quad j=1,2,
\end{equation}
\begin{equation}
\|v_j\|_{\mathcal{K}_T}:= \|J^1_xv_j\|_{L^{\infty}_TL^2_x} + (1+T)^{-\mu}\|v_j\|_{{L^2_xL^{\infty}_T}},
\end{equation}
and the norm on $\mathcal{Z}_T$ is defined by
\begin{equation}\label{Z-norm-2}
\|(u, v_1, v_2)\|_{\mathcal{Z}_T}:= \|u\|_{\mathcal{S}_T} + \|v_1\|_{\mathcal{K}_T}+ \|v_2\|_{\mathcal{K}_T}.
\end{equation}

The proof of the global well-posedness result is also similar to that of Theorem \ref{Thm1}. Considering,
$0< q<4$ and $1\leq p_j<\frac43$, $j=1,2$, one can use the conserved quantities stated in \eqref{EdefNKK}, \eqref{con-1NKK} and \eqref{con-2NKK}, to get an {\em a priori} estimate
\begin{equation}\label{ap-1NKK}
\|u(t)\|_{H^1}^2+\|v_1(t)\|_{H^1}^2+\|v_2(t)\|_{H^1}^2\leq f(\|{u}_0\|_{H^1}, \|{v_1}_0\|_{H^1},\|{v_2}_0\|_{H^1}),
\end{equation}
which can be used to extend the local solution to any arbitrary time interval $[0,T]$, $T>0$.
So, we omit the details.
\end{proof}

\subsection{Local theory below the energy space}

The proof of the local well-posedness results stated in Theorems \ref{Thm1.1} and \ref{Thm1.2} follow with an standard contraction mapping argument in the Bourgain's space framework.  The main ingredients are the linear, bilinear and trilinear estimates recorded in Lemmas \ref{lem-0}--\ref{lem-3}.

In fact, to prove Theorem \ref{Thm1.1}, we define
\begin{equation}\label{appl-1.1}
\begin{cases}
\Phi_j(\Delta)=\psi(t)S(t){u_j}_{0}+i\psi_T(t)\displaystyle \int_0^tS(t-t')\left[\alpha_1 (u_jv)+\gamma_j|u_j|^2u_j\right](t')dt' \medskip\\
\Psi(\Delta) = \psi(t)V(t)v_0 -\psi_T(t)\displaystyle \int_0^t V(t-t')\partial_x\left[\beta v^2+\alpha_1|u_1|^2+\alpha_2|u_2|^2\right](t')dt'.
\end{cases}
\end{equation}
Taking $M_j=2C_0\|{u_j}_0\|_{H^s}$, $j=1,2$, $M =2C_0\|v_0\|_{H^k}$, and for $0<\epsilon\ll 1$
\begin{equation}\label{T-epsilon}
T^{\epsilon}\leq \frac12 \min \Big\{\frac1{C_1[M+M_1^2]}, \; \frac1{C_2[M+M_2^2]}, \; \frac{M}{C[M^2+M_1^2+M_2^2]} \Big\},
\end{equation}
one can show that  $(\Phi_1, \Phi_2, \Psi)$ is a contraction mapping on
\begin{equation*}\label{f-s}
\mathcal{Z}^{\epsilon}:= \Big\{ (u_1, u_2, v)\in X^{s, \frac12+\epsilon}\times X^{s, \frac12+\epsilon}\times Y^{k, \frac12+\epsilon} : \|u_j\|_{X^{s, \frac12+\epsilon}}\leq M_j, \; \|v\|_{Y^{k, \frac12+\epsilon}} \leq M \Big\},
\end{equation*}
with a norm defined by
\begin{equation*}
\|(u_1, u_2, v)\|_{\mathcal{Z}^{\epsilon}}:= \|u_1\|_{X^{s, \frac12+\epsilon}}+\|u_2\|_{X^{s, \frac12+\epsilon}}+\|v\|_{Y^{k, \frac12+\epsilon}}.
\end{equation*}
This process is now a classical one, so we omit the details.

The proof of Theorem \ref{Thm1.2} follows analogously.\\

 At this point, a natural question arises about global well-posedness below energy spaces. For this, the recently introduced
{\em{I-method}} \cite{CKSTT-1, CKSTT-2} combined with almost conserved quantities could be useful. This will be addressed elsewhere.
\section{Existence of Prescribed $L^2$-Norm Solutions}\label{VarProbP1}
\noindent  This section is devoted to the proof of the existence of
nontrivial normalized solutions of the system \eqref{ODE}. Throughout this section, we assume that
all conditions of \eqref{assumptions} hold and that $0< p <4.$

\subsection{The variational problem}
We study the following problem: for given
$(r,l,m)\in \mathbb{R}_{+}^2\times \mathbb{R},$
find a function $(\phi_1,\phi_2,w)\in S_r\times S_l \times K_m$ such that $E(\phi_1,\phi_2,w)=\Theta(r,l,m),$
where $\Theta(r,l,m)$ is defined by
\begin{equation}\label{deftheta}
\Theta(r,l,m)=\inf\left\{ E(f_1,f_2,g):(f_1,f_2,g) \in S_r\times S_l \times K_m \right\}.
\end{equation}
Before we proceed, let us fix some notations that will be used in the sequel:
\begin{equation*}
\begin{aligned}
& \Sigma_{r,l,m}=S_{r\times l}\times K_m,\ \textrm{where}\ S_{r\times l}=S_r\times S_l, \\
& E_1(f)=E(f,0,0),\ E_2(g)=E(0,g,0),\ E_3(h)=E(0,0,h),\\
& E_{12}(f,g)=E(f,g,0),\ E_{23}(g,h)=E(0,g,h),\ E_{13}(f,h)=E(f,0,h),
\end{aligned}
\end{equation*}
and
\begin{equation*}
F_j(f_j,h)=\alpha_j\int_{-\infty}^{\infty}|f_j|^2h\ dx,\ j=1,2.
\end{equation*}

\begin{lem}\label{Neginf}
The function $\Theta(r,l,m)$
is finite and negative.
\end{lem}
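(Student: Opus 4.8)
The plan is to prove the two assertions separately: boundedness from below, which gives $\Theta(r,l,m)>-\infty$, and the existence of an admissible triple of strictly negative energy, which gives $\Theta(r,l,m)<0$ (and, since this triple has finite energy over a nonempty constraint set, also $\Theta(r,l,m)<\infty$). Together these yield that $\Theta(r,l,m)$ is finite and negative.

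For boundedness from below, I would control the three negative contributions in $E$ on the set $S_r\times S_l\times K_m$ by means of the Gagliardo--Nirenberg inequality of Proposition~\ref{GN-inequality}. Using the constraints $\|u_j\|_{L^2}^2=r$ (resp.\ $l$) and $\|v\|_{L^2}^2=m$ one obtains
\[
\int |u_j|^{q_j+2}\,dx \le C\,\|u_j\|_{L^2}^{\frac{q_j+4}{2}}\|\partial_x u_j\|_{L^2}^{\frac{q_j}{2}},
\qquad
\int |v|^{p+2}\,dx \le C\,\|v\|_{L^2}^{\frac{p+4}{2}}\|\partial_x v\|_{L^2}^{\frac{p}{2}},
\]
while for the coupling terms
\[
\Big|\int |u_j|^2 v\,dx\Big| \le \|u_j\|_{L^4}^2\,\|v\|_{L^2} \le C\,\|u_j\|_{L^2}^{3/2}\|\partial_x u_j\|_{L^2}^{1/2}\|v\|_{L^2}.
\]
Because $0<q_j<4$ and $0<p<4$ by \eqref{assumptions}, each exponent $q_j/2$, $p/2$, $1/2$ attached to a gradient norm is strictly less than $2$, so Young's inequality absorbs every negative term into a small multiple of $\sum_{j}\|\partial_x u_j\|_{L^2}^2+\|\partial_x v\|_{L^2}^2$ plus a constant depending only on $(r,l,m)$. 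Choosing the absorption parameters small enough leaves
\[
E(u_1,u_2,v) \ge \tfrac12\Big(\sum_{j=1}^2\|\partial_x u_j\|_{L^2}^2 + \|\partial_x v\|_{L^2}^2\Big) - C(r,l,m) \ge -C(r,l,m),
\]
which proves $\Theta(r,l,m)>-\infty$.

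For negativity I would use the mass-preserving rescaling $f_\theta(x):=\theta^{1/2}f(\theta x)$, $\theta>0$, which satisfies $\|f_\theta\|_{L^2}=\|f\|_{L^2}$. Fix any admissible triple $(f_1,f_2,g)\in S_r\times S_l\times K_m$ with $g\ge 0$ (possible since $r,l,m>0$), and set $\Delta_\theta=(f_{1,\theta},f_{2,\theta},g_\theta)\in S_r\times S_l\times K_m$. Under this scaling the gradient terms scale like $\theta^2$, the self-interaction terms $\int|f_{j,\theta}|^{q_j+2}$ like $\theta^{q_j/2}$, and $\int g_\theta^{p+2}\ge 0$ like $\theta^{p/2}$; since $\gamma_j,\beta>0$ force $\tau_j,\tau>0$, all of these enter $E$ with a negative sign, and their exponents $q_j/2,p/2$ are strictly below the gradient exponent $2$. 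Hence as $\theta\to 0^+$ the negative terms dominate, so $E(\Delta_\theta)<0$ for all sufficiently small $\theta$, giving $\Theta(r,l,m)\le E(\Delta_\theta)<0$. (Taking $f_1,f_2,g$ with mutually disjoint supports kills the coupling terms and makes this step entirely transparent.)

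The main obstacle is the boundedness-from-below step: one must check that \emph{every} negative contribution is genuinely subquadratic in the gradient norms, which is precisely where the restrictions $0<q_j<4$ and $0<p<4$ are used, and then arrange the Young's-inequality absorptions so that the residual constant depends only on the prescribed masses $(r,l,m)$. The negativity statement, by contrast, is a soft scaling argument requiring only that the ``defocusing'' exponents stay below $2$.
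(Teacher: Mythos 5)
Your proposal is correct and follows essentially the same route as the paper: Gagliardo--Nirenberg combined with Young's inequality with a small parameter to absorb every negative term of $E$ into the gradient norms on the constraint set (using $0<q_j<4$, $0<p<4$), and the mass-preserving scaling $f_\theta(x)=\theta^{1/2}f(\theta x)$ to produce an admissible competitor with $E<0$. The only (harmless) difference is in the negativity step: the paper keeps all components positive and lets the coupling term $-\theta^{1/2}\sum_{j}\alpha_j\int|f_j|^2g\,dx$ dominate the $\theta^2$ gradient contribution, whereas you discard the coupling (disjoint supports) and let the self-interaction terms of order $\theta^{q_j/2}$ and $\theta^{p/2}$ do the work; both dominate $\theta^2$ as $\theta\to0^+$ under the standing assumptions \eqref{assumptions}.
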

\begin{proof}
Take an element $\Delta=(f_1,f_2,g)$ of $\Sigma_{r,l,m}.$
Using the Gagliardo-Nirenberg inequality and the Cauchy-Schwarz inequality,
it follows that there exists a constant $C=C(q_1, \epsilon)$ such that
\begin{equation}\label{estimate1}
\begin{aligned}
& \|f_1\|_{L^{q_1+2}}^{q_1+2} \leq C\|\partial_xf_{1}\|_{L^2}^{q_1/2}\|f_1\|_{L^2}^{(q_1/2)+2}\\
&\ \ \ \ \leq \epsilon \|\partial_xf_{1}\|_{L^2}^{2}+C\|f_1\|_{L^2}^{2+4q_1/(4-q_1)}=\epsilon \|\partial_xf_{1}\|_{L^2}^{2}+Cr^{1+2q_1/(4-q_1)},
\end{aligned}
\end{equation}
where $\epsilon>0$ can be chosen arbitrarily small.
Using H\"{o}lder's inequality, we obtain that
\begin{equation}\label{estimate2}
\int_{-\infty}^{\infty}|f_1|^2|g|\ dx \leq C \|f_{1}\|_{L^4}^{2}\|g\|_{L^2}\leq \epsilon \|\partial_xf_{1}\|_{L^2}^{2} +Cr(1+t).
\end{equation}
Similar estimates hold for $\|f_2\|_{L^{q_2+2}}^{q_2+2},$ $\|g\|_{L^{p+2}}^{p+2},$ and $\int_{-\infty}^{\infty}|f_2|^2|g|\ dx.$
These estimates will be used repeatedly throughout the rest of the paper.
With the aid of the estimates \eqref{estimate1} and \eqref{estimate2}, one can infer that
\begin{equation*}
E(\Delta)\geq (1-\epsilon)\sum_{j=1}^{2}\|f_j\|_{H^1}^{2}+(1-\epsilon)\|g\|_{H^1}^2-C_{\epsilon,r,l,m}-(r+l+m).
\end{equation*}
Taking $\epsilon<1,$ we now obtain
\begin{equation*}
E(\Delta)\geq -C_{\epsilon,r,l,m}-(r+l+m) > -\infty.
\end{equation*}
To prove $\Theta(r,l,m)<0,$ take any $\Delta=(f_1,f_2,g)\in \Sigma_{r,l,m}$ such
that $f_1(x)>0, f_2(x)>0,$ and $g(x)>0$ for all $x\in \mathbb{R}.$ For arbitrary $\theta>0,$ define the scaling functions
\begin{equation*}
f_{j\theta}(\cdot)=\theta^{1/2}f_j(\cdot \theta), \ j=1,2,\ \textrm{and}\ g_{\theta}(\cdot)=\theta^{1/2}g(\cdot \theta).
\end{equation*}
It is easy to see that $\Delta_\theta=(f_{1\theta},f_{2\theta},g_\theta)$ belongs to $\Sigma_{r,l,m}$ as well and hence, one has
\begin{equation*}
E(\Delta_\theta)\leq \theta^2 \int_{-\infty}^{\infty}\left(|\partial_xf_{1}|^2+|\partial_xf_{2}|^2+|\partial_xg|^2 \right)\ dx - \theta^{1/2}\sum_{j=1}^{2}\alpha_j|f_{j}|^2g\ dx.
\end{equation*}
Upon taking $\theta$ small enough, it is obvious that $E(\Delta_\theta)<0$ and hence,
the infimum $\Theta(r,l,m)$ defined in \eqref{deftheta} is negative.
\end{proof}
\noindent In what follows we call a sequence $\{(f_{1n},f_{2n},g_n)\}$ of functions in $\h$ an $(r,l,m)$-admissible
if the following conditions hold:
\begin{equation*}
\lim_{n\to \infty}Q(f_{1n})=r,\ \ \lim_{n\to \infty}Q(f_{2n})=l,\ \textrm{and}\ \lim_{n\to \infty}Q(g_n)=m.
\end{equation*}
We will say that an $(r,l,m)$-admissible sequence $\{(f_{1n},f_{2n},g_n)\}$ of functions in $\h$ is
a minimizing sequence for $\Theta(r,l,m)$ if it satisfies the condition
\begin{equation}\label{MScondition}
\lim_{n\to \infty}E(f_{1n},f_{2n},g_n)=\Theta(r,l,m).
\end{equation}
Using the estimates obtained in Lemma~\ref{Neginf}, it is easy to prove that
such a sequence is bounded.
The common element in the proof of the relative compactness of minimizing sequence
via concentration compactness argument is to show the
strict subadditivity condition of the problem.
In the present situation, however, this is considerably difficult by the fact that
the function $\Theta(r,l,m)$ consists of three independent parameters.
To overcome this difficulty we utilize the properties of symmetric rearrangement of functions
to carry a careful analysis of minimizing sequences.
In the next few lemmas we will be devoted to proving the strict sub-additivity of $\Theta(r,l,m).$

\smallskip

The first lemma establishes some special properties of minimizing sequences.
\begin{lem}\label{Seqprop}
Suppose $\{(f_{1n},f_{2n},g_n)\}$ be an $(r,l,m)$-admissible sequence satisfying the condition \eqref{MScondition}.
Then the following properties hold:

\smallskip

(i) if $s>0$ and $r,l\geq 0,$ then
there exists a pair of numbers $(\delta_1, N_1)\in (0,\infty)\times \mathbb{N}$ such
that $\|\partial_xg_{n}\|_{L^{2}}\geq \delta_1$ holds for all $n\geq N_1.$

\smallskip

(ii) if $r>0$ and $l, m\geq 0,$ then there
exists a pair $(\delta_2, N_2)\in (0,\infty)\times \mathbb{N}$ such that $\|\partial_xf_{1n}\|_{L^{2}}\geq \delta_2$ for all $n\geq N_2.$
A similar assertion holds for $\|\partial_xf_{2n}\|_{L^2}$ when $r$ and $l$ are switched.
\end{lem}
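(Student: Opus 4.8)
The plan is to argue by contradiction, using that $\Theta(r,l,m)<0$ (Lemma~\ref{Neginf}) together with the observation that forcing the derivative of a single component to zero makes the energy \emph{decouple} from that component, so that the limiting value of $E$ must dominate the infimum of a reduced variational problem having one fewer active field. I would prove part~(i) in full and obtain part~(ii) by the same scheme with the roles of $g$ and $f_1$ (resp.\ $f_2$) interchanged. Throughout I use that a sequence satisfying \eqref{MScondition} is bounded in $\h$.

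For part~(i) (the relevant hypothesis being $m>0$), suppose toward a contradiction that $\|\partial_xg_n\|_{L^2}\to 0$ along a subsequence. The one–dimensional inequalities then give the decoupling: by Proposition~\ref{GN-inequality}, $\|g_n\|_{L^{p+2}}^{p+2}\lesssim\|\partial_xg_n\|_{L^2}^{p/2}\|g_n\|_{L^2}^{(p+4)/2}\to 0$, and by the elementary bound $\|g_n\|_{L^\infty}^2\le 2\|g_n\|_{L^2}\|\partial_xg_n\|_{L^2}\to 0$ the coupling terms obey $|F_j(f_{jn},g_n)|\le\alpha_j\|g_n\|_{L^\infty}\|f_{jn}\|_{L^2}^2\to 0$. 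Recalling \eqref{Edef}, this yields $E(f_{1n},f_{2n},g_n)=E_{12}(f_{1n},f_{2n})+o(1)$, where $E_{12}(f_1,f_2)=E(f_1,f_2,0)$ has no $g$–dependence. After multiplying $f_{1n},f_{2n}$ by scalars tending to $1$ to restore the exact constraints $\|f_{1n}\|_{L^2}^2=r,\ \|f_{2n}\|_{L^2}^2=l$ (which perturbs $E_{12}$ by $o(1)$ since the mass defects vanish), I get
\[
\Theta(r,l,m)=\lim_n E(f_{1n},f_{2n},g_n)\ \ge\ \Theta(r,l,0).
\]

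The decisive step, which I expect to be the main obstacle, is the strict inequality $\Theta(r,l,m)<\Theta(r,l,0)$, which contradicts the display above. First I would reduce to nonnegative long–wave profiles: replacing $g$ by $|g|$ does not raise $E$, since $\|\partial_x|g|\|_{L^2}\le\|\partial_xg\|_{L^2}$, $\||g|\|_{L^{p+2}}=\|g\|_{L^{p+2}}$ and $-\alpha_j\int|f_j|^2|g|\le-\alpha_j\int|f_j|^2g$. Then, given any near–minimizer $(f_1,f_2)$ for $\Theta(r,l,0)$, I insert a spread–out long wave $g^\theta(x)=\sqrt{m}\,\theta^{1/2}G(\theta x)$ with $G\ge 0$, $\|G\|_{L^2}=1$, so $(f_1,f_2,g^\theta)\in\Sigma_{r,l,m}$, and compute the increment
\[
\Delta E=m\,\theta^2\|\partial_xG\|_{L^2}^2-\tau\,m^{\frac{p+2}{2}}\theta^{p/2}\|G\|_{L^{p+2}}^{p+2}-\sum_{j=1}^2\alpha_j\sqrt{m}\,\theta^{1/2}\!\int|f_j|^2G(\theta\cdot)\,dx .
\]
Here the coupling term is $\le 0$ because $G\ge 0$, and $\tau=2\beta/((p+1)(p+2))>0$ by \eqref{assumptions}; since $p<4$ gives $p/2<2$, the focusing term of order $\theta^{p/2}$ dominates the gradient cost of order $\theta^2$ as $\theta\to 0$. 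Thus there are a fixed $\theta_0>0$ and $\delta>0$, independent of the near–minimizer, with $\Delta E\le-\delta$, and taking the infimum gives $\Theta(r,l,m)\le\Theta(r,l,0)-\delta$, proving the strict inequality and hence part~(i).

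Part~(ii) follows the identical pattern after discarding $f_1$. If $\|\partial_xf_{1n}\|_{L^2}\to 0$ then, using $\|f_{1n}\|_{L^4}^4\lesssim\|\partial_xf_{1n}\|_{L^2}\|f_{1n}\|_{L^2}^3\to 0$ (Proposition~\ref{GN-inequality}), the self–interaction and coupling terms carrying $f_1$ vanish, so $E(f_{1n},f_{2n},g_n)=E_{23}(f_{2n},g_n)+o(1)\ge\Theta(0,l,m)$, where $E_{23}(f_2,g)=E(0,f_2,g)$. Inserting a spread–out short wave $f_1^\theta(x)=\sqrt r\,\theta^{1/2}F(\theta x)$, the favorable leading contribution is now $-\tau_1 r^{(q_1+2)/2}\theta^{q_1/2}\|F\|_{L^{q_1+2}}^{q_1+2}$ with $\tau_1=2\gamma_1/(q_1+2)>0$, which beats the gradient cost $r\theta^2\|\partial_xF\|_{L^2}^2$ precisely because $q_1<4$; this yields $\Theta(r,l,m)<\Theta(0,l,m)$ and the contradiction, the assertion for $\|\partial_xf_{2n}\|_{L^2}$ being symmetric. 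The only delicate points are fixing the sign of the coupling term (handled by the reduction to $g\ge 0$) and securing a gap $\delta$ \emph{uniform} over near–minimizers, which is exactly what legitimizes passing to the infimum in the scaling estimate.
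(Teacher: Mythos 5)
Your proof is correct, and for part (i) it is essentially the paper's own argument: both run the contradiction through the decoupling estimates and then insert a rescaled third component $\theta^{1/2}w(\theta\cdot)$, using that the term $-\tau\,\theta^{p/2}\int w^{p+2}$ (with $\tau>0$, $p<4$) beats the $\theta^2$ gradient cost while the coupling contribution has a favorable sign. For part (ii), however, you take a genuinely different and in some ways leaner route. The paper splits into four cases according to whether $l$ and $m$ vanish, invokes the existence of a minimizer $(\phi_l,w_m)$ of the reduced two-component problem (Lemma~\ref{nlskdvAB}) together with the explicit ground states of Lemmas~\ref{KdVSOe} and \ref{NLSSOe}, and obtains the strict decrease from the \emph{coupling} term $F_1(R_r,w_m)\sim\alpha_1\theta\int w_m>0$, which requires knowing $w_m$ is a fixed positive integrable profile. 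You instead prove a single uniform-gap inequality $\Theta(r,l,m)\le\Theta(0,l,m)-\delta$ by adding a spread-out first component to an arbitrary \emph{near}-minimizer of the reduced problem, extracting the strict decrease from the self-focusing term $-\tau_1 r^{(q_1+2)/2}\theta^{q_1/2}\|F\|_{L^{q_1+2}}^{q_1+2}$ (legitimate under \eqref{assumptions}, which give $\gamma_1>0$ and $q_1<4$) and neutralizing the sign of the coupling by first replacing $g$ with $|g|$, which does not increase $E$. This buys you two things: you never need the reduced-problem existence/uniqueness lemmas, and the case analysis collapses because the gap $\delta$ is uniform over near-minimizers. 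The trade-off is that your part (ii) hinges on $\gamma_1>0$: in the resonant case $\gamma_1=0$ (which the paper flirts with elsewhere, e.g.\ in Theorem~\ref{P2thm}(ii) and Lemma~\ref{lem4existence}) your favorable term disappears, whereas the paper's coupling-based argument survives since it only needs $\alpha_1>0$ and $\int w_m>0$. Within the standing hypotheses \eqref{assumptions} of Section~\ref{VarProbP1}, though, your argument is complete and valid.
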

We prove each part separately.

\smallskip

\noindent \textbf{Proof of Lemma~\ref{Seqprop}~(i).} Suppose, for the sake of contradiction, that
the conclusion of part (i) is
not true. Then, passing to an appropriate subsequence if necessary,
we may assume that $\underset{n \to\infty}{\lim}\|\partial_xg_{n}\|_{L^2}=0.$
Using the estimates obtained in Lemma~\ref{Neginf}, one sees that
\begin{equation*}
\int_{-\infty}^{\infty}|g_n|^{p+2}\ dx \to 0\ \ \textrm{and}\
 \int_{-\infty}^{\infty}|f_{jn}|^2|g_n|\ dx\to 0,\ j=1,2,
\end{equation*}
as $n\to \infty.$ In consequence, we come to the identity
\begin{equation}\label{gpos3}
\Theta(r,l,m)=\lim_{n \to \infty}E(f_{1n},f_{2n},g_n)=\lim_{n\to \infty}\sum_{j=1}^2E_j(f_{jn}).
\end{equation}
Now, take any $ w\in S_m$ such that $w\geq 0.$ For an arbitrary $\theta>0,$
we set $w_\theta(\cdot)=\theta^{1/2}w(\cdot \theta).$ Then the scaling function $w_\theta$ also
belongs to $S_m$ and hence, for all $n,$ there obtains
\begin{equation}\label{partiInq}
\Theta(r,l,m)\leq E(f_{1n},f_{2n},w_\theta).
\end{equation}
On the other hand, for small enough $\theta,$ it is obvious that
\begin{equation}\label{gpos4}
A_w:=\theta^2\int_{-\infty}^{\infty}|\partial_xw|^2\ dx-\tau \theta^{p/2}\int_{-\infty}^{\infty} w^{p+2}\ dx <0,
\end{equation}
Using this notation, one obtains from the inequality \eqref{partiInq} that
\begin{equation*}
\begin{aligned}
\Theta(r,l,m) \leq \sum_{j=1}^{2}\left( E_j(f_{jn})-\theta^{1/2}F_j(f_{jn},w)\right)+ A_w
 \leq \sum_{j=1}^{2}E_j(f_{jn}) + A_w.
\end{aligned}
\end{equation*}
Upon passing limit as $n\to \infty$ to the last inequality, we obtain that
\begin{equation*}
\Theta(r,l,m)\leq \lim_{n\to \infty}\sum_{j=1}^{2}E_j(f_{jn}) + A_w,
\end{equation*}
but this last inequality contradicts \eqref{gpos3} and \eqref{gpos4}, and hence, the result (i) follows. \hfill $\Box$

\smallskip

\noindent To prove part (ii), we shall make use of the following result of \cite{[AB11]}
concerning the existence of solutions of a certain problem closely related to \eqref{deftheta}.

\begin{lem}\label{nlskdvAB}
For $(\phi, w)\in H_{\mathbb{C}}^1\times H_{\mathbb{R}}^1,$
define $E_{23}(\phi,w)=E(0,\phi,w).$ Then, for every $l>0$ and $m>0,$ there exists a solution $(\phi_0,w_m)$ to the problem
\begin{equation*}
\nu(l,m)=\inf\{E_{23}(\phi,w):\phi\in S_l\ \textrm{and}\ w\in K_m\}.
\end{equation*}
Furthermore, $w_m(x)>0$ for all $x\in \mathbb{R}$ and there exists a positive $\mathbb{R}$-valued
function $\phi_l$ such that $\phi_0=\phi_l$ up to a phase factor. In particular, $E_{23}(\phi_l,w_m)=\nu(l,m).$
\end{lem}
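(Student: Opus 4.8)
The plan is to obtain the minimizer by the concentration--compactness method of Lions \cite{[L1]}, observing that $E_{23}(\phi,w)=\int(|\partial_x\phi|^2-\tau_2|\phi|^{q_2+2}-\alpha_2|\phi|^2w+|\partial_xw|^2-\tau w^{p+2})\,dx$ is exactly the energy of the two-component NLS--KdV system and that the constraint set $S_l\times K_m$ carries two independent $L^2$-masses. First I would verify that $\nu(l,m)$ is finite and strictly negative. The finiteness (a lower bound) follows from the Gagliardo--Nirenberg and Cauchy--Schwarz estimates established in Lemma~\ref{Neginf}, applied to the single pair $(\phi,w)$; under \eqref{assumptions} (so $\tau_2>0$, $\tau>0$, $\alpha_2>0$, $0<q_2<4$, $0<p<4$) these also show that every minimizing sequence $\{(\phi_n,w_n)\}$ is bounded in $H^1_{\mathbb{C}}\times H^1_{\mathbb{R}}$. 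The strict negativity comes from testing with the scaled family $\phi_\theta(\cdot)=\theta^{1/2}\phi(\cdot\,\theta)$, $w_\theta(\cdot)=\theta^{1/2}w(\cdot\,\theta)$ for small $\theta>0$, precisely as in the proof of Lemma~\ref{Neginf}.

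Next I would apply the concentration--compactness lemma to the mass densities $\rho_n=|\phi_n|^2+w_n^2$ (or jointly to the two densities $|\phi_n|^2$ and $w_n^2$, splitting the masses as $(l,m)=(l_1,m_1)+(l_2,m_2)$). Vanishing is ruled out because it would force $\int|\phi_n|^{q_2+2}$, $\int w_n^{p+2}$ and $\int|\phi_n|^2w_n$ all to tend to zero, contradicting $\nu(l,m)<0$. The decisive step is to exclude dichotomy, which requires the strict subadditivity inequality $\nu(l,m)<\nu(l_1,m_1)+\nu(l_2,m_2)$ for every nontrivial admissible splitting; I expect this to be the main obstacle. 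The natural mechanism is the homogeneity of the decoupled parts of $\nu$ combined with the coupling term $-\alpha_2\int|\phi|^2w$, which penalizes splitting the mass into separated lumps and thereby produces the strict inequality. Once dichotomy is excluded, compactness holds: after translating by a suitable sequence $(y_n)\subset\mathbb{R}$, a subsequence of $\{(\phi_n(\cdot+y_n),w_n(\cdot+y_n))\}$ converges strongly in $H^1\times H^1$ to a limit $(\phi_0,w_m)\in S_l\times K_m$ with $E_{23}(\phi_0,w_m)=\nu(l,m)$.

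Finally I would extract the qualitative properties. Since $E_{23}$ depends on $\phi$ only through $|\phi|$, $|\partial_x\phi|$ and $|\phi|^2w$, the diamagnetic inequality $|\partial_x|\phi||\le|\partial_x\phi|$ shows that replacing $\phi_0$ by $|\phi_0|$ does not increase the energy, so the minimizer may be written $\phi_0=\zeta\phi_l$ with $\zeta\in S^1$ and $\phi_l\ge 0$ real. The minimizer satisfies the Euler--Lagrange system $-\phi_l''+\sigma\phi_l=\gamma_2\phi_l^{q_2+1}+\alpha_2\phi_l w_m$ and $-w_m''+c\,w_m=\tfrac{\beta}{p+1}w_m^{p+1}+\tfrac{\alpha_2}{2}\phi_l^2$ with Lagrange multipliers $\sigma,c$. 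If $\phi_l$ vanished at a point it would have a zero minimum there, hence vanishing derivative, and ODE uniqueness would give $\phi_l\equiv0$, contradicting $\|\phi_l\|_{L^2}^2=l>0$; thus $\phi_l>0$ on $\mathbb{R}$. For the long-wave component, the source term $\tfrac{\alpha_2}{2}\phi_l^2$ is nonnegative and nontrivial, so the maximum principle applied to the equation for $w_m$ (together with the sign of the multiplier $c$, which is obtained as part of the variational analysis) forces $w_m(x)>0$ for all $x\in\mathbb{R}$. This yields the asserted solution, with $E_{23}(\phi_l,w_m)=\nu(l,m)$, in agreement with \cite{[AB11]}.
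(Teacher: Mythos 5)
The first thing to say is that the paper does not prove this lemma at all: it is imported verbatim from \cite{[AB11]} (it is introduced with ``the following result of \cite{[AB11]}''), so there is no in-paper argument to compare against. Your overall strategy --- concentration--compactness for the two-constraint minimization, with vanishing excluded via $\nu(l,m)<0$ and dichotomy excluded via strict subadditivity --- is exactly the strategy of \cite{[AB11]}, and it is also the strategy the paper itself carries out in full for the harder three-parameter problem $\Theta(r,l,m)$ in Section~\ref{VarProbP1}. Your identification of $E_{23}$, the Gagliardo--Nirenberg lower bound, and the scaling argument giving $\nu(l,m)<0$ are all correct.

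The genuine gap is the strict subadditivity $\nu(l,m)<\nu(l_1,m_1)+\nu(l_2,m_2)$. You correctly single it out as the decisive step, but then dispose of it with a one-sentence heuristic (``the coupling term penalizes splitting''), which is not a proof and does not even address the splittings that actually require work. For a two-parameter infimum the cases are genuinely different in kind: when $l_1,l_2>0$ one needs special (symmetrized, compactly supported) minimizing sequences, the quantitative rearrangement inequality \eqref{garineq} measuring the kinetic-energy loss when two separated bumps are rearranged, and uniform lower bounds $\|\partial_x\phi_n\|_{L^2}\ge\delta>0$ along minimizing sequences (the two-component analogue of Lemma~\ref{Seqprop}); for degenerate splittings such as $(l_1,m_1)=(l,0)$, $(l_2,m_2)=(0,m)$, the decoupled infima are attained by explicit NLS and gKdV ground states and strictness comes from the positivity of $\int|\phi|^2w\,dx$ for the superposed profiles, as in cases (ii)--(v) of Lemma~\ref{subadd}. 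None of this machinery appears in your proposal, and a pure homogeneity argument does not close it for a function of two independent masses. A secondary, smaller gap: the positivity $w_m>0$ does not follow from ``the maximum principle'' as stated, since one must first establish $w_m\ge 0$ (via the identity $\int|\phi|^2|w|\,dx=\int|\phi|^2w\,dx$ and a convolution-kernel representation, as in the proof of Theorem~\ref{SOexistence}) and must actually prove the sign of the Lagrange multiplier $c$ rather than assume it is ``obtained as part of the variational analysis.''
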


The next two lemmas are well-known uniqueness results  (for details and further discussion, we refer readers to \cite{[Ca]}).
\begin{lem}\label{KdVSOe}
Suppose $W_{p}\in H_{\mathbb{R}}^1$ is a non-zero solution of
\begin{equation*}
-Q^{\prime \prime}+\lambda_3 Q = \frac{\beta}{p+1}Q^{p+1},
\end{equation*}
where $\lambda_3\in \mathbb{R}.$ Then $\lambda_3>0$ and $W_p(x)=w(x+x_0),$ where $x_0\in \mathbb{R}$ and $w_p(x)$
has the following explicit expression
\begin{equation}
w_p(x)=\left(\frac{(p+1)(p+2)\lambda_3}{2\beta} \right)^{1/p}\textrm{sech}^{2/p}\left(\frac{\sqrt{\lambda_3}~p x}{2}\right).
\end{equation}
\end{lem}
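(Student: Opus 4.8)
The plan is to reduce the second-order ODE to a first-order autonomous equation, integrate it explicitly, and use the planar phase-plane structure to rule out all but the single-bump profile. First I would note that any $W_p\in H^1_{\mathbb R}(\mathbb R)$ solving $Q''=\lambda_3 Q-\tfrac{\beta}{p+1}Q^{p+1}$ is automatically smooth (elliptic bootstrapping) and satisfies $Q(x),Q'(x)\to 0$ as $x\to\pm\infty$. Multiplying the equation by $Q'$, integrating, and using this decay to fix the constant of integration at $0$ yields the first integral
\begin{equation*}
(Q')^2=\lambda_3 Q^2-\frac{2\beta}{(p+1)(p+2)}\,Q^{p+2},
\end{equation*}
which is the backbone of the whole argument.

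Next I would use this identity to pin down the sign of $Q$ and of $\lambda_3$. If $Q(x_0)=0$ at some finite $x_0$, the first integral forces $Q'(x_0)=0$ as well, and then uniqueness for the second-order ODE with data $(Q,Q')(x_0)=(0,0)$ gives $Q\equiv 0$, contradicting $W_p\neq 0$; hence $Q$ never vanishes and has constant sign, which, recalling $\beta>0$ from \eqref{assumptions}, we may take to be positive. Being positive and decaying at both ends, $Q$ attains a global maximum at some point, which after a translation $x\mapsto x+x_0$ we place at the origin. Evaluating the first integral there (where $Q'=0$) gives
\begin{equation*}
\lambda_3=\frac{2\beta}{(p+1)(p+2)}\,Q(0)^p>0,
\end{equation*}
establishing $\lambda_3>0$ and identifying $Q(0)=\big(\tfrac{(p+1)(p+2)\lambda_3}{2\beta}\big)^{1/p}$.

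With $\lambda_3>0$ the origin is a hyperbolic saddle of the associated planar system, and the level curve of the first integral through $(0,0)$ is a single homoclinic loop. This is the step where uniqueness up to translation really lives: I would argue that $Q$ is strictly monotone on each side of its maximum (hence even about $x=0$) and that the loop is traversed exactly once. On $(0,\infty)$ one then separates variables in $Q'=-Q\sqrt{\lambda_3-\tfrac{2\beta}{(p+1)(p+2)}Q^{p}}$ and evaluates the resulting integral by the standard substitution $s^2=\tfrac{2\beta}{(p+1)(p+2)\lambda_3}Q^{p}$, which reduces it to $\int \tfrac{ds}{s\sqrt{1-s^2}}$. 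This produces
\begin{equation*}
Q(x)=\Big(\tfrac{(p+1)(p+2)\lambda_3}{2\beta}\Big)^{1/p}\textrm{sech}^{2/p}\!\Big(\tfrac{\sqrt{\lambda_3}\,p\,x}{2}\Big),
\end{equation*}
and undoing the translation gives $W_p(x)=w_p(x+x_0)$; a direct differentiation confirms that this $\textrm{sech}^{2/p}$ profile satisfies the first integral, closing the computation.

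I expect the genuine obstacle to be the uniqueness statement rather than the explicit integration: one must verify that the homoclinic orbit to the saddle is unique and that $Q$ has no secondary critical points, so that the solution is exactly the monotone single bump. Once strict monotonicity on each half-line is in hand the separation-of-variables integration is routine, and the positivity reduction handles the sign subtlety coming from the fractional power $p=\tfrac{n_1}{n_2}$.
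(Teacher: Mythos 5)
Your argument is correct and is the standard quadrature proof; note that the paper does not actually prove this lemma at all --- it is quoted as a well-known uniqueness result with a pointer to \cite{[Ca]} --- so you are supplying the argument the authors delegate to the literature. Two remarks on the two places you leave open. First, the step you flag as ``the genuine obstacle'' (no secondary critical points) is in fact immediate from your first integral: at any critical point $x_1$ one has $\lambda_3 Q(x_1)^2=\tfrac{2\beta}{(p+1)(p+2)}Q(x_1)^{p+2}$, hence $Q(x_1)^p=\tfrac{(p+1)(p+2)\lambda_3}{2\beta}$ takes the \emph{same} value at every critical point, and substituting back into the equation gives $Q''(x_1)=-\tfrac{p\lambda_3}{2}\,Q(x_1)<0$; so every critical point is a strict local maximum at the same height, and two of them would force an interior minimum of strictly smaller height, a contradiction. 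Thus $Q$ has exactly one critical point and is strictly monotone on each half-line, which legitimizes the separation of variables. Second, the only genuinely too-quick step is ``which \dots we may take to be positive'': the sign of $Q$ is determined by the equation, not chosen. When $n_1$ is odd your first integral does rule out $Q<0$ (at the global minimum one would need $\lambda_3=\tfrac{2\beta}{(p+1)(p+2)}Q_{\min}^{p}<0$, while $(Q')^2/Q^2=\lambda_3-\tfrac{2\beta}{(p+1)(p+2)}Q^p\ge 0$ together with $Q\to 0$ at infinity forces $\lambda_3\ge 0$), but when $n_1$ is even the map $Q\mapsto Q^{p+1}$ is odd and $-w_p(\cdot+x_0)$ is also a nonzero $H^1$ solution, so the conclusion holds only up to sign (e.g.\ $p=2$). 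That defect is inherited from the lemma as stated, but your proof should either carry out the odd-$n_1$ sign argument or record the ``up to sign'' caveat rather than assert positivity.
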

\noindent The second lemma concerns about the uniqueness of solutions of the
equations,
\begin{equation}\label{NLSexplilem}
\left\{
\begin{aligned}
-&Q_j^{\prime \prime}+\lambda_j Q_j=(q_j+2)\tau_j~|Q_j|^{q_j+1},\\
& Q_j \in H_{\mathbb{C}}^1\setminus \{0\},\ \lambda_j\in \mathbb{R},\ j=1,2.
\end{aligned}
\right.
\end{equation}

\begin{lem}\label{NLSSOe}
There is, up to translation and phase shift, a unique solution of the equation \eqref{NLSexplilem}, i.e.,
the set of all solutions of \eqref{NLSexplilem} is of the form
\begin{equation*}
\mathcal{G}_j=\left\{e^{i\theta_j}\psi_{q_j}(\cdot +x_0): \theta_j, x_0\in \mathbb{R}\right\},
\end{equation*}
where $\psi_{q_j}(x)$ is explicitly given by
\begin{equation}
\psi_{q_j}(x)=\left(\frac{\lambda_j}{2\tau_j} \right)^{1/q_j}\textrm{sech}^{2/q_j}\left(\frac{\sqrt{\lambda_j}~q_j x}{2}\right).
\end{equation}
\end{lem}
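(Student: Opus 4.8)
The plan is to read \eqref{NLSexplilem} as an autonomous second-order ODE and reduce it, in three stages, to a single first-order separable equation whose only bounded solution vanishing at infinity is the explicit $\textrm{sech}$ profile. I drop the index $j$ and use the gauge-covariant reading of the nonlinearity, $-Q''+\lambda Q=(q+2)\tau\,|Q|^{q}Q$, consistent with the term $\gamma_j|\phi_j|^{q_j}\phi_j$ in \eqref{ODE}; elliptic bootstrapping first upgrades any $Q\in H_{\mathbb{C}}^1\setminus\{0\}$ to a classical $C^2$ solution with $Q,Q'\to0$ at $\pm\infty$. \emph{Stage 1 (reduction to a real profile):} writing $Q=u+iv$ with $u,v$ real, both components carry the common radial factor $(u^2+v^2)^{q/2}$; multiplying the $u$-equation by $v$, the $v$-equation by $u$ and subtracting yields $(uv'-vu')'=0$, so the Wronskian $uv'-vu'$ is constant, and the decay of $Q,Q'$ forces that constant to be $0$. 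Hence $v$ is a constant multiple of $u$ and $Q=e^{i\theta_0}R$ for a real-valued $R$ and some $\theta_0\in\mathbb{R}$; conversely gauge invariance makes $e^{i\theta}R$ a solution for every $\theta$, which is the origin of the phase parameter in $\mathcal{G}_j$.

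\emph{Stage 2 ($\lambda>0$):} for the real equation $-R''+\lambda R=(q+2)\tau|R|^{q}R$, I pair against $R$ and against $xR'$ and integrate, obtaining $\|R'\|_{L^2}^2+\lambda\|R\|_{L^2}^2=(q+2)\tau\int|R|^{q+2}$ together with $\tfrac12\|R'\|_{L^2}^2-\tfrac\lambda2\|R\|_{L^2}^2+\tau\int|R|^{q+2}=0$. Eliminating the nonlinear integral between the two relations gives $(q+4)\|R'\|_{L^2}^2=\lambda q\|R\|_{L^2}^2$, and since $q>0$ and $R\not\equiv0$ this forces $\lambda>0$ (and, from the first identity, $\tau>0$, in agreement with \eqref{assumptions}) — exactly what is needed for the $\textrm{sech}$ profile and its exponential decay rate $\sqrt{\lambda}$ to make sense.

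\emph{Stage 3 (integration of the homoclinic orbit):} the autonomous equation has the first integral $\tfrac12(R')^2-\tfrac\lambda2R^2+\tau|R|^{q+2}=\mathcal{E}$, and decay forces $\mathcal{E}=0$, i.e. $(R')^2=\lambda R^2-2\tau|R|^{q+2}$. In the phase plane $(R,R')$ the origin is a saddle when $\lambda>0$, and on the zero-energy level there is exactly one orbit homoclinic to it in each half-plane $\{R>0\}$ and $\{R<0\}$; translation invariance accounts for $x_0$. Separating variables in $R'=\pm\sqrt{\lambda}\,R\sqrt{1-\tfrac{2\tau}{\lambda}R^{q}}$ on $\{R>0\}$ and integrating produces precisely $\psi_{q}$, whose maximum $\bigl(\tfrac{\lambda}{2\tau}\bigr)^{1/q}$ is the turning point $R'=0$. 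The $\{R<0\}$ orbit is $-\psi_{q}=e^{i\pi}\psi_{q}$, already contained in $\mathcal{G}_j$, so combining the three stages gives $\mathcal{G}_j$ exactly.

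I expect the main obstacle to be the non-integration parts rather than the quadrature: ruling out genuinely complex, non-constant-phase solutions in Stage 1 (the vanishing-Wronskian argument, which is special to the one-dimensional, first-order-free setting) and the claim in Stage 3 that the zero-energy level carries a unique homoclinic orbit per half-plane. The regularity and the explicit integration are routine; the real content lies in these two uniqueness statements.
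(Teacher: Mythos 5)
Your proposal is correct, but the comparison here is lopsided: the paper does not prove Lemma~\ref{NLSSOe} at all --- it labels it a ``well-known uniqueness result'' and defers entirely to the reference \cite{[Ca]}. What you have written is the standard self-contained ODE argument that such references contain, and it is sound: the bootstrap to a classical decaying solution, the vanishing-Wronskian reduction $Q=e^{i\theta_0}R$ (valid because $u$ and $v$ satisfy the \emph{same} linear second-order equation with potential $\lambda-(q+2)\tau(u^2+v^2)^{q/2}$, so zero Wronskian forces linear dependence globally), the two integral identities giving $(q+4)\|R'\|_{L^2}^2=\lambda q\|R\|_{L^2}^2$ and hence $\lambda>0$ (and $\tau>0$, consistent with \eqref{assumptions}), and the zero-energy quadrature $(R')^2=\lambda R^2-2\tau|R|^{q+2}$ whose unique homoclinic loop in $\{R>0\}$ is $\psi_q$; the check that a nontrivial solution cannot touch $R=0$ (since then $R'=0$ there too, forcing $R\equiv 0$ by ODE uniqueness, the nonlinearity being locally Lipschitz for $q>0$) closes the one-signedness claim. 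You also correctly repair a defect in the statement itself: as printed, the right-hand side of \eqref{NLSexplilem} is $|Q_j|^{q_j+1}$, which is not phase-covariant and is incompatible with the asserted family $e^{i\theta_j}\psi_{q_j}$; your reading $|Q_j|^{q_j}Q_j$, matching the nonlinearity in \eqref{ODE}, is the only one under which the lemma is true. In short, the paper buys brevity by citation; your argument supplies the proof the citation points to, and it is complete modulo routine justification of the integrations by parts in Stage~2.
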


\smallskip

We are now able to prove part (ii) of Lemma~\ref{Seqprop}.

\smallskip

\noindent \textbf{Proof of Lemma~\ref{Seqprop}~(ii).} The proof is again carried out by contradiction.
As before, by extracting a subsequence if necessary, one assumes that $\underset{n\to \infty}{\lim}\|\partial_xf_{1n}\|_{L^2}=0.$
Then, using the estimates \eqref{estimate1} and \eqref{estimate2} yet again, one easily verifies that
\begin{equation*}
\lim_{n\to \infty}\int_{-\infty}^{\infty}|f_{1n}|^2|g_n|\ dx =0= \lim_{n\to \infty}\int_{-\infty}^{\infty} |f_{1n}|^{q_1+2}\ dx,
\end{equation*}
and as a result, we come to the identity
\begin{equation}\label{Inqcase2}
\Theta(r,l,m)=\lim_{n\to \infty}\left[ E_2(f_{2n})+E_3(g_n)-F_{2}(f_{2n},g_n)\right].
\end{equation}
The following four cases are possible: (i) $l>0$ and $m>0\ ;$ (ii) $l>0$
and $m=0\ ;$ (iii) $l=0$ and $m>0\ ;$ and (iv) $l=0$ and $m=0 .$
If $(l,m)\in \mathbb{R}_{+}^2,$ let the functions $\phi_l$ and $w_m$ be as defined in Lemma~\ref{nlskdvAB}. Then
$\Theta(r,l,m) \geq E_{23}(\phi_l,w_m)$. To arrive at a contradiction,
it is claimed that there exists $R_r\in S_r$ such that
\begin{equation}\label{Neg1str}
\|\partial_xR_{r}\|_{L^2}^2 -F_1(R_r, w_m) < 0.
\end{equation}
To prove \eqref{Neg1str}, take any $\rho\in C_c^{\infty}$ such that $\rho\geq 0,$ $\rho(0)=1,$ and $\|\rho\|_{L^2}^2=r.$
For an arbitrary $\theta > 0,$ we set $\rho_{\theta}(x)=\theta^{1/2}\rho(\theta x).$ Then $\rho_\theta \in S_r.$
Since the function $w_m$ is integrable over $\mathbb{R},$ an application of Lebesgue dominated convergence theorem yields
\begin{equation*}
\|\partial_x\rho_{\theta } \|_{L^2}^2-F_1(\rho_\theta, w_m) \leq \theta^2\int_{-\infty}^{\infty}|\partial_x\rho|^2\ dx-\theta \int_{-\infty}^{\infty}w_m(x)\ dx.
\end{equation*}
Since $\int_{-\infty}^{\infty}w_m(x)\ dx>0,$ it follows that $\|\partial_x\rho_{\theta} \|_{L^2}^2-F_1(\rho_\theta, w_m)<0$
for sufficiently small $\theta.$ Thus
$R_r=\rho_{\theta}$ satisfies \eqref{Neg1str} whenever $\theta$ is sufficiently small.
With \eqref{Neg1str} in hand, we now obtain
\begin{equation*}
\begin{aligned}
\Theta(r,l,m)& \leq E(R_r,\phi_l,w_m)\\
& =E_{23}(\phi_l,w_m)+\left( \|\partial_xR_{r}\|_{L^2}^2 -F_1(R_r, w_t)\right)-\tau_1 \int_{-\infty}^{\infty} |R_r|^{q_1+2}\ dx\\
& \leq E_{23}(\phi_l,w_m)+ \left( \|\partial_xR_{r}\|_{L^2}^2 -F_1(R_r, w_m)\right) < E_{23}(\phi_l,w_m),
\end{aligned}
\end{equation*}
which is a contradiction. Next, consider the case that $l>0$ and $m=0.$
The uniqueness result (Lemma~\ref{NLSSOe}) implies that for any $l>0,$ any solution
of the problem
$\inf\{E_2(g):g\in S_l\}$
is of the form $\psi_l=e^{i\theta_0}T_{x_0}\psi_{q_2},$
where $\theta_0, x_0\in \mathbb{R}$ and $\psi_{q_2}$ is as defined in Lemma~\ref{NLSSOe}.
Then, from \eqref{Inqcase2}, we have that
$\Theta(r,l,m) \geq E_2(\psi_l).$
To arrive at a contradiction,
take any $(f,w)\in S_r\times K_m$ such that $f(x)>0$ and $w(x)>0$ for $x\in \mathbb{R}.$ For an arbitrary $\theta>0,$ define
$f_\theta(x)=\theta^{1/2}f(\theta x)$ and $w_\theta(x)=\theta^{1/2}w(\theta x).$ Then it is obvious
that $(f_\theta,w_\theta)\in S_r\times K_m$ and hence,
\begin{equation*}
\begin{aligned}
E_{13}(f_\theta, w_\theta)& =E_1(f_\theta)+E_3(w_\theta)-F_1(f_\theta, w_\theta) \\
& \leq \theta^2 \int_{-\infty}^{\infty}\left(|\partial_xf|^2+|\partial_xw|^2 \right)\ dx - \theta^{1/2}\int_{-\infty}^{\infty}\alpha_1|f|^2w\ dx,
\end{aligned}
\end{equation*}
from which it is concluded that $E_{13}(f_\theta, w_\theta)<0$ whenever $\theta$ is sufficiently small.
In consequence of the preceding inequality, one has that
\begin{equation*}
\begin{aligned}
\Theta(r,l,m)& \leq E(f_\theta,\psi_l,w_\theta)=E_2(\psi_l)+E_{13}(f_\theta, w_\theta)-F_2(\psi_l,w_\theta) \\
& \leq E_2(\psi_l)+E_{13}(f_\theta, w_\theta)<E_2(\psi_l),
\end{aligned}
\end{equation*}
a contradiction. The case that $l=0$ and $m>0$ is similar. Finally, suppose that $l=0$ and $m=0.$
Then, from \eqref{Inqcase2}, one has $\Theta(r,l,m)\geq 0.$ On the other hand,
\begin{equation*}
\Theta(r,l,m)=\inf\left\{E_1(f): f\in S_r\right\},
\end{equation*}
and we can make $E_1(f)<0$ by taking the scaling function $f_\theta(x)=\theta^{1/2}f(\theta x)$ defined as before.
This in turn implies that $\Theta(r,l,m)<0,$ a contradiction.
This completes the proof in all cases. The conclusion for $\partial_xf_{2n}$ can be proved by using an analogous argument. \hfill $\Box$

\medskip

\noindent Another important ingredient for the proof of strict sub-additivity is the following lemma, which concerns the
existence of special minimizing sequences for $\Theta(r,l,m).$

\begin{lem}\label{specialmin}
There exist a minimizing sequence $\{(f_{1n},f_{2n},g_n)\}$ for $\Theta(r,l,m)$ such that
for each $n,$ the functions $f_{1n}, f_{2n},$ and $g_n$ are $\mathbb{R}$-valued, non-negative, $C_c^{\infty},$ even,
non-increasing on the set $[0,\infty),$ and satisfy the condition
\begin{equation*}
\|f_{1n}\|_{L^2}^{2}=r,\ \|f_{2n}\|_{L^2}^{2}=l,\ \textrm{and}\ \|g_{n}\|_{L^2}^{2}=m.
\end{equation*}
\end{lem}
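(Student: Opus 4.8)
The plan is to begin with an arbitrary minimizing sequence and to improve it in successive stages---symmetrization of moduli, followed by a structure-preserving smooth truncation---where each stage keeps the three constraints in \eqref{norcon2} intact and does not increase $E$, so that the minimizing property survives by a squeezing argument. First I would fix any sequence $\{(\tilde f_{1n},\tilde f_{2n},\tilde g_n)\}\subset\Sigma_{r,l,m}$ with $E(\tilde f_{1n},\tilde f_{2n},\tilde g_n)\to\Theta(r,l,m)$, which exists by the definition \eqref{deftheta}.

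For the symmetrization, I would replace each component by the symmetric-decreasing rearrangement of its modulus, $f_{jn}^{*}:=(|\tilde f_{jn}|)^{*}$ and $g_n^{*}:=(|\tilde g_n|)^{*}$, each of which is nonnegative, even, and non-increasing on $[0,\infty)$. Three facts control the energy. Equimeasurability preserves every $L^\rho$-norm, so the constraints $\|f_{1n}^{*}\|_{L^2}^2=r$, $\|f_{2n}^{*}\|_{L^2}^2=l$, $\|g_n^{*}\|_{L^2}^2=m$ persist, the terms $\int|f_{jn}|^{q_j+2}$ are unchanged, and $\int (g_n^{*})^{p+2}=\int|\tilde g_n|^{p+2}\ge\int\tilde g_n^{p+2}$ (the last inequality because $n_2$ odd gives $v^{p+2}\le|v|^{p+2}$ pointwise), so $-\tau\int(g_n^{*})^{p+2}$ is no larger. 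The P\'olya--Szeg\H{o} inequality gives $\|\partial_x f_{jn}^{*}\|_{L^2}\le\|\partial_x|\tilde f_{jn}|\|_{L^2}\le\|\partial_x\tilde f_{jn}\|_{L^2}$, the last step by the diamagnetic inequality, and similarly for $g$. Finally, the Hardy--Littlewood rearrangement inequality, together with $((|\tilde f_{jn}|)^{*})^2=(|\tilde f_{jn}|^2)^{*}$, yields $\int(f_{jn}^{*})^2 g_n^{*}\ge\int|\tilde f_{jn}|^2|\tilde g_n|\ge\int|\tilde f_{jn}|^2\tilde g_n$, so the coupling terms $-\alpha_j\int(f_{jn}^{*})^2 g_n^{*}$ (recall $\alpha_j>0$) only decrease. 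Hence $E(f_{1n}^{*},f_{2n}^{*},g_n^{*})\le E(\tilde f_{1n},\tilde f_{2n},\tilde g_n)$, and since the left-hand side is $\ge\Theta(r,l,m)$, the rearranged sequence is again minimizing.

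It remains to make each function $C_c^{\infty}$ without losing the structure. I would convolve with an even, symmetric-decreasing mollifier, which yields a smooth function that is again even and non-increasing (the convolution of two symmetric-decreasing functions is symmetric-decreasing), and then multiply by an even cutoff $\chi(\cdot/R)$ that is non-increasing in $|x|$; the product stays nonnegative, even, and non-increasing (a product of nonnegative non-increasing functions on $[0,\infty)$ is non-increasing), is now $C_c^{\infty}$, and converges to the unmultiplied function in $H^1$ as $R\to\infty$. Choosing the mollification scale small and $R$ large, and then rescaling by a scalar tending to $1$ to restore the exact $L^2$ masses, I obtain for each $n$ a triple of nonnegative, even, non-increasing, $C_c^{\infty}$ functions with $L^2$-norms exactly $r,l,m$ and energy within $1/n$ of $E(f_{1n}^{*},f_{2n}^{*},g_n^{*})$; here I use the continuity of $E$ on $\h$, which follows from the Gagliardo--Nirenberg and H\"older estimates already recorded in Lemma~\ref{Neginf}. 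A diagonal extraction then delivers the claimed minimizing sequence.

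I expect this last smoothing stage to be the main obstacle, since one must approximate in $H^1$ while simultaneously preserving nonnegativity, evenness, monotonicity, and the exact mass constraints; by contrast the symmetrization stage is routine once the direction of the Hardy--Littlewood inequality for the coupling term and the sign of the $v^{p+2}$ contribution have been checked.
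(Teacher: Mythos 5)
Your proposal is correct and follows essentially the same route as the paper's proof: pass to symmetric-decreasing rearrangements of the moduli (using P\'olya--Szeg\H{o}, the diamagnetic inequality, and Hardy--Littlewood to check $E$ does not increase, with the signs $\tau,\tau_j,\alpha_j>0$ from \eqref{assumptions} making every term go the right way), then mollify, truncate, and renormalize the $L^2$ masses. The only difference is cosmetic --- the paper truncates to compact support before rearranging, while you cut off after mollifying --- and your term-by-term verification of the rearrangement step is in fact more explicit than the paper's citation of the rearrangement inequalities in Lieb--Loss.
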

\begin{proof}
Start with a given minimizing sequence $(p_{1n},p_{2n},q_{n})$ for $\Theta(r,l,m).$
Without loss of generality we may assume that $(r,l,m)\in \mathbb{R}_{+}^3.$
First approximate $(p_{1n},p_{2n},q_{n})$
by compactly supported functions $(c_{1n},c_{2n},d_{n}).$
For a non-negative measurable function $f,$ let $f^\ast$ denotes its symmetric decreasing
rearrangement (for details, see Chapter~3 of \cite{[LL]}).
Using rearrangement inequalities (cf. Chapter~7 of \cite{[LL]}),
we have that
\begin{equation*}
E(|f_1|^\ast,|f_2|^\ast,|g|^\ast)\leq E(f_1,f_2,g)
\end{equation*}
for any $(f_1,f_2,g)\in \h.$
Hence, one can assume without loss of generality that $c_{1n}=|c_{1n}|^\ast,$ $c_{2n}=|c_{2n}|^\ast,$
and $d_{n}=|d_{n}|^\ast$ hold.
Now let $\psi \in C^\infty_{c}$ be any non-negative, even, and decreasing function on the set $[0,\infty),$ which also satisfies the condition
$\int_{-\infty}^{\infty}\psi(x)\ dx=1.$ For any arbitrary $\epsilon>0,$
consider $\psi_\epsilon(\cdot)=(1/\epsilon)\psi(\cdot/\epsilon),$ and set
\begin{equation*}
f_{1n}=\frac {r^{1/2} \left(c_{1n}\star \psi_{\epsilon_n}\right)} {\|c_{1n}\star \psi_{\epsilon_n}\|_{L^2}}, \
\ \ f_{2n}=\frac {s^{1/2} \left(c_{2n}\star \psi_{\epsilon_n}\right)} {\|c_{2n}\star \psi_{\epsilon_n}\|_{L^2}},\
g_{n} = \frac {t^{1/2} \left(d_{n}\star \psi_{\epsilon_n}\right)} {\|d_{n}\star \psi_{\epsilon_n}\|_{L^2}},
\end{equation*}
with $\epsilon_n$ chosen approximately small whenever $n$ is large.
Since $\psi_{\epsilon_n}$ is a mollifier,
it follows that the sequence
$\{(f_{1n}, f_{2n}, g_{n})\}$ is the desired minimizing
sequence.
\end{proof}

\noindent With the properties of
minimizing sequences given in Lemmas~\ref{Seqprop} and \ref{specialmin} in hand, we are now able to
prove the strict subadditivity inequality for the function $\Theta(r,l,m).$ This will be an essential ingredient later
in ruling out the case of dichotomy.

\begin{lem} \label{subadd}
The function
$\Theta(r,l,m)$ enjoys
the following strict subadditivity property
\begin{equation}
\label{SUBA}
\Theta(r_{1}+r_{2},l_{1}+l_{2},m_{1}+m_{2})<\sum_{i=1}^{2}\Theta(r_{i},l_{i},m_{i}),\ r_i, l_i, m_i \geq 0.
\end{equation}
\end{lem}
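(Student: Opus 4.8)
The plan is to isolate the only genuinely nonlinear mechanism behind \eqref{SUBA}, namely the scaling behaviour of $\Theta$, and to combine it with the attractive short--long wave coupling. The easy half is automatic: given near-minimizers for $\Theta(r_1,l_1,m_1)$ and $\Theta(r_2,l_2,m_2)$, translating one of them off to spatial infinity makes every interaction integral appearing in $E$ tend to the sum of the two separate energies, so that $\Theta(r_1+r_2,l_1+l_2,m_1+m_2)\le \Theta(r_1,l_1,m_1)+\Theta(r_2,l_2,m_2)$ with no effort. All the real work is in upgrading this to a strict inequality.

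The engine will be a strict sub-homogeneity estimate: for every $\theta>1$ and every admissible $(r,l,m)$,
\begin{equation*}
\Theta(\theta r,\theta l,\theta m) < \theta\,\Theta(r,l,m)\qquad(\star).
\end{equation*}
To prove $(\star)$ I would take a minimizing sequence $\{(f_{1n},f_{2n},g_n)\}$ for $\Theta(r,l,m)$, which by Lemma~\ref{specialmin} may be assumed real, non-negative and even, and test $\Theta(\theta r,\theta l,\theta m)$ with the mass-rescaled competitor $(\sqrt\theta f_{1n},\sqrt\theta f_{2n},\sqrt\theta g_n)$. Under this amplitude scaling the quadratic gradient part of $E$ is multiplied by $\theta$, while the potential terms $\tau_j\|f_{jn}\|_{L^{q_j+2}}^{q_j+2}$, $\tau\|g_n\|_{L^{p+2}}^{p+2}$ and the couplings $\alpha_j\int|f_{jn}|^2g_n$ pick up the factors $\theta^{(q_j+2)/2}$, $\theta^{(p+2)/2}$ and $\theta^{3/2}$, every exponent being strictly larger than $1$. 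Since \eqref{assumptions} forces $\tau_j,\tau,\alpha_j>0$ and the test functions are non-negative, each potential term is non-negative, so
\begin{equation*}
\theta\,E(f_{1n},f_{2n},g_n)-E(\sqrt\theta f_{1n},\sqrt\theta f_{2n},\sqrt\theta g_n)\ge \kappa\,N_n,
\end{equation*}
where $N_n$ is the total potential energy and $\kappa=\kappa(\theta)>0$. Because $\Theta(r,l,m)<0$ by Lemma~\ref{Neginf} and the gradient part is non-negative, $N_n=(\text{gradient})-E\ge -E\to|\Theta(r,l,m)|>0$, so $N_n$ is bounded below by a fixed positive constant; passing to the limit yields $(\star)$.

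With $(\star)$ in hand the proportional and purely scalar splittings are immediate: when one summand is a scalar multiple of the other, the standard one-parameter manipulation (writing the larger vector as $\theta$ times the smaller and applying $(\star)$ once forward and once backward) gives \eqref{SUBA}; in particular the one-component subadditivities of $\Theta(\cdot,0,0)$, $\Theta(0,\cdot,0)$ and $\Theta(0,0,\cdot)$ follow at once. The genuinely three-parameter splittings are where the coupling must be used: translation alone only returns the non-strict bound, and to gain strictness I would refrain from separating the configurations completely and instead exploit the sign of the interaction $-\alpha_j\int|f_j|^2g$. Whenever the total long-wave mass $m_1+m_2>0$ and some short-wave mass is present, superimposing a short-wave profile on the long-wave field $w_m$ supplied by Lemma~\ref{nlskdvAB} (or on the explicit solitons of Lemmas~\ref{KdVSOe}--\ref{NLSSOe}) strictly lowers $E$, exactly by the scaling computations \eqref{Neg1str}--\eqref{gpos4} already exploited in Lemma~\ref{Seqprop}. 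Indeed Lemma~\ref{Seqprop} is what makes this coupling available: it prevents the relevant gradients, and hence the fields themselves, from vanishing along minimizing sequences, so every field is genuinely present and free to interact.

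The hard part will be the bookkeeping of this last step across all the sign patterns of $(r_i,l_i,m_i)$: because the three constraints are independent, there is no single scaling that reduces a general split to $(\star)$, precisely the difficulty flagged after \eqref{deftheta}. I would therefore organize the argument as a case analysis according to which of the six masses vanish, in each case choosing a competitor that activates at least one strictly negative coupling integral while calibrating the three amplitudes so that the constraints $\|\cdot\|_{L^2}^2=r,l,m$ are met \emph{exactly}. The delicate point, and the one most likely to require care, is that this competitor must simultaneously realize the prescribed masses and strictly beat $\sum_i\Theta(r_i,l_i,m_i)$; here the strict positivity $w_m>0$ from Lemma~\ref{nlskdvAB} together with the evenness and monotonicity from Lemma~\ref{specialmin} are what allow the interaction integrals to be controlled and shown to contribute a definite strict gain.
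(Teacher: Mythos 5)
Your sub-homogeneity estimate $(\star)$ is correctly set up (the amplitude scaling $f\mapsto\sqrt\theta f$ does raise every potential term to a power strictly greater than one, and $N_n\ge -E(f_{1n},f_{2n},g_n)\to-\Theta(r,l,m)>0$ keeps the gain uniform), and it does yield strict subadditivity along rays, i.e.\ when $(r_2,l_2,m_2)$ is a scalar multiple of $(r_1,l_1,m_1)$. But that is precisely the degenerate situation; the content of \eqref{SUBA} lies in the non-proportional splittings, and there $(\star)$ gives nothing. You recognize this and defer it to a case analysis in which a competitor ``activates at least one strictly negative coupling integral,'' but you never supply the quantitative mechanism that makes the gain strict and uniform along minimizing sequences, and this is exactly where the proof lives. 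The paper's engine is entirely different and is absent from your proposal: starting from the special minimizing sequences of Lemma~\ref{specialmin} (real, non-negative, even, decreasing, compactly supported), it translates the two configurations to disjoint supports, takes the symmetric decreasing rearrangement of each summed component, and invokes the rearrangement inequality \eqref{garineq} of \cite{[AB11]}, which \emph{strictly} decreases the kinetic energy by $\tfrac34\min\{\|f'\|_{L^2}^2,\|g'\|_{L^2}^2\}$ while preserving all $L^p$ norms and not decreasing the coupling integrals. Combined with the uniform gradient lower bounds of Lemma~\ref{Seqprop}, this produces the fixed deficit $3\delta/4$ in \eqref{subaddKn} for the main cases ($r_1,r_2>0$, and the analogous ones), with no proportionality assumption whatsoever.

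A second, concrete reason your coupling-based fallback cannot be waved through: in a splitting such as $(r_1,0,0)+(0,l_2,0)$ with all long-wave mass in one piece absent, the two short-wave components $u_1$ and $u_2$ do not interact directly in $E$ at all --- the only cross terms in the energy run through $v$ --- so ``activating a coupling'' requires carefully pairing each short-wave block with a long-wave block that is actually present, and the admissible pairings change with the sign pattern of the six masses. This is why the paper, after extracting the generic cases by the rearrangement argument, still needs the bespoke constructions of cases (iii)--(v), built from the explicit solitons of Lemmas~\ref{KdVSOe} and \ref{NLSSOe} and the strict negativity of $\int|\phi|^2 w\,dx$. Your sketch points in that direction for those degenerate cases, but without the rearrangement inequality you have no proof for the generic ones, so as it stands the argument has a genuine gap rather than a repairable omission of detail.
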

\begin{proof}
We may assume that $r_1+r_2>0,$\ $l_1+l_2>0,$\ $m_1+m_2>0,$
$r_1+l_1+m_1>0,$ and $r_2+l_2+m_2>0;$ otherwise \eqref{SUBA} reduces to the strict
subadditivity inequality of the function with fewer parameters.
For $i=1,2,$ consider the special minimizing sequences
$\{(f_{1n}^{(i)}, f_{2n}^{(i)}, g_{n}^{(i)})\}$ for $\Theta(r_i,l_i,m_i),$ as constructed in Lemma~\ref{specialmin}.
For each $n$, select the numbers $x_n$ such that
such that
\begin{equation*}
\textrm{supp}\ f_{jn}^{(1)}\cap \textrm{supp}\ T_{x_n}f_{jn}^{(2)}=\emptyset\ \ \textrm{and}\ \textrm{supp}\ g_{n}^{(1)}\cap \textrm{supp}\ T_{x_n}g_{n}^{(2)}=\emptyset,
\end{equation*}
and define the sequence $\{(f_{1n},f_{2n},g_n)\}$ of functions by setting
\begin{equation}\label{deffgSUBA}
(f_{1n},f_{2n},g_n)=\left( \left(f_{1n}^{(1)}+T_{x_n}f_{1n}^{(2)}\right)^\ast, \left(f_{2n}^{(1)}+T_{x_n}f_{2n}^{(2)}\right)^\ast, \left(g_{n}^{(1)}+T_{x_n}g_{n}^{(2)} \right)^\ast\right).
\end{equation}
By the definition of the infimum $\Theta(r_1+r_2,l_1+l_2,m_1 + m_2),$ it is clear that
\begin{equation}
\Theta(r_1+r_2,l_1+l_2,m_1 + m_2) \le E(f_{1n},f_{2n},g_n).
\label{IleE}
\end{equation}

\noindent A lemma about symmetric rearrangement (Lemma~2.10 of \cite{[AB11]}) now comes to our aid. The lemma
states that if $f,g:\mathbb{R}\to [0,\infty)$ are non-increasing, even, $C_c^\infty$ functions;
the real numbers
$x_1, x_2$ be such that the translated functions $T_{x_1}f$ and
$T_{x_2}g$ have disjoint supports; and
$S=T_{x_1}f+T_{x_2}g,$
then the first derivative $(S^\ast)'$ of
$S^\ast$ (in the distributional sense) is in $L^2$ and one has the estimate
\begin{equation}
\|(S^\ast)'\|_{L^2}^2 \le \|S'\|_{L^2}^2 - \frac34 \min\{\|f'\|_{L^2}^2,\|g'\|_{L^2}^2\}.
\label{garineq}
\end{equation}

\smallskip

\noindent Applying the estimate \eqref{garineq} to each component of the sequence \eqref{deffgSUBA} and using the well-known
fact that $\|u\|_{L^p}=\|u^\ast\|_{L^p}, 1\leq p \leq \infty,$ it immediately follows that
\begin{equation}
\begin{aligned}
\|\partial_xg_{n}\|_{L^2}^2+ & \sum_{j=1}^2\|\partial_xf_{jn}\|_{L^2}^2
 \le \|\partial_xg_{n}^{(1)}\|_{L^2}^2+ \|\partial_x(T_{x_n}g_{n}^{(2)})\|_{L^2}^2 \\
& + \sum_{j=1}^{2}\left( \|\partial_xf_{jn}^{(1)}\|_{L^2}^2+\|\partial_x(T_{x_n}f_{jn}^{(2)})\|_{L^2}^2 \right)\\
& -\frac{3}{4}\left(\min\left\{\|\partial_xg_{n}^{(1)}\|_{L^2}^2,\|\partial_xg_{n}^{(2)}\|_{L^2}^2\right\}
+ \sum_{j=1}^{2} \min\left\{\|\partial_xf_{jn}^{(1)}\|_{L^2}^2,\|\partial_xf_{jn}^{(2)}\|_{L^2}^2\right\}\right).
\end{aligned}
\label{kinenstrictdec}
\end{equation}
Now estimating the right side of \eqref{IleE} using \eqref{kinenstrictdec} and the
rearrangement inequality (Chapter 3 of \cite{[LL]}), and passing to the limit as $n\to \infty$ in the resultant inequality,
there obtains
\begin{equation}
\Theta(r_1+r_2, l_1+l_2, m_1+m_2) \le \sum_{i=1}^{2}\Theta(r_i,l_i,m_i) - \liminf_{n \to \infty} J_n,
\label{subaddKn}
\end{equation}
where the quantity $J_n$ denotes the last term in \eqref{kinenstrictdec} involving minimums.
We now prove the strict inequality \eqref{SUBA}. As noted in \cite{[SanDCDS]}, it is sufficient to consider the following cases:
\begin{enumerate}
\item[(i)] $r_1,r_2>0$ and $l_1,l_2,m_1,m_2\geq 0 ;$
\item[(ii)] $r_1=0, r_2>0, l_2>0,\ \text{and} \ m_1=0 ;$
\item[(iii)] $r_1=0, r_2>0, l_2>0, \ \text{and} \ m_1>0;$
\item[(iv)] $r_1=0, r_2>0, l_2=0, \ \text{and} \ m_1=0 ;$ and
\item[(v)] $r_1=0, r_2>0, l_2=0, \ \text{and} \ m_1>0.$
\end{enumerate}
All other cases can be reduced to one of these cases by switching the roles of the parameters.
Consider first the case that $r_1, r_2 > 0.$ Using
Lemma \ref{Seqprop}(ii), there exist a pair of positive numbers $\{\delta_1,\delta_2\}$
such that for all large enough $n$, we have
$\|\partial_xf_{1n}^{(1)}\|_{L^2} \ge \delta_1$ \ \text{and}\ $\|\partial_xf_{1n}^{(2)}\|_{L^2} \ge
\delta_2.$
Let $\delta =
\min(\delta_1, \delta_2)> 0$. Then it follows that $J_n \ge
3\delta/4$ for all large enough $n,$ and in view of \eqref{subaddKn},
we conclude that
\begin{equation*}
\begin{aligned}
\Theta(r_1+r_2, l_1+l_2, m_1+m_2) &\le \Theta(r_1,l_1,m_1)+\Theta(r_2,l_2,m_2) - 3\delta/4 \\
&< \Theta(r_1,l_1,m_1)+\Theta(r_2,l_2,m_2). \label{subaddwdelta}
\end{aligned}
\end{equation*}
Next suppose the case that $r_1=0, r_2>0, l_2>0,$ and $m_1=0.$
Since $r_1+l_1+m_1>0,$ so $l_1>0$ as well. Then another application of Lemma~\ref{Seqprop}(ii) guarantees the existence of
numbers $\delta_3, \delta_4 >0$
such that for all large enough $n$,
$\|\partial_xf_{2n}^{(1)}\|_{L^2} \ge \delta_3$ \ \text{and}\ $\|\partial_xf_{2n}^{(2)}\|_{L^2} \ge
\delta_4.$
As before, set $\delta =
\min(\delta_3, \delta_4)> 0$. Then it is obvious that $J_n \ge
3\delta/4$ for all large enough $n,$ and \eqref{SUBA} follows from \eqref{subaddKn}.
This completes the proof in case (ii).

\smallskip

\noindent We now turn to the case (iii), i.e., when $r_1=0, r_2>0, l_2>0, \ \text{and} \ m_1>0.$
If $l_1>0$ or $m_2>0,$ then the proofs follow same lines as that in the case (ii) above. Thus, we assume
$l_1=0 \ \text{and} \ m_2=0,$ and prove that
\begin{equation}\label{Suba3}
\Theta(r_2,l_2,m_1) < \Theta(0,0,m_1)+\Theta(r_2,l_2,0).
\end{equation}
Take the function $w_p$ as defined in Lemma~\ref{KdVSOe} with $\lambda_3>0$ so chosen
such that $w_p \in K_{m_1}.$ Then the function
$w_{m_1}=w_p$ satisfies the identity
\begin{equation*}
E_3(w_{p})=\inf\{E_3(h): h\in K_{m_1} \}
\end{equation*}
(see, for example \cite{[Ca]}).
Similarly, let $\psi_{q_1}, \psi_{q_2}$ be the functions as defined in
Lemma~\ref{NLSSOe} with $\lambda_1$ and $\lambda_2$ so chosen such that $(\psi_{q_1},\psi_{q_2})\in S_{{r_2}\times {l_2}}.$
Then the functions $\phi_{r_2}=\psi_{q_1}$ and $\phi_{l_2}=\psi_{q_2}$ satisfy the identities
\begin{equation*}
\begin{aligned}
E_1(\phi_{r_2})=\inf\{E_1(f): f\in S_{r_2} \}\ \textrm{and}\
E_2(\phi_{s_2})=\inf\{E_2(g): g\in S_{l_2} \}.
\end{aligned}
\end{equation*}
Now the function $(\phi_{r_2},\phi_{l_2},w_{m_1})$ belongs to $S_{{r_2}\times {l_2}}\times K_{m_1}$ and we come to the inequality
\begin{equation}\label{subcase3}
\begin{aligned}
& \Theta(r_2,l_2,m_1)\leq E(\phi_{r_2},\phi_{l_2},w_{m_1})=\int_{-\infty}^{\infty} \left(|\partial_x(w_{m_1})|^{2}-\tau w_{m_1}^{p+2}\right)dx \\
& + \int_{-\infty}^{\infty} \left( |\partial_x(\phi_{r_2})|^{2}+|\partial_x(\phi_{l_2})|^{2}-\tau_1|\phi_{r_2}|^{q_1+2}-\tau_2|\phi_{l_2}|^{q_2+2}
 \right)dx \\
& - \alpha_1 \int_{-\infty}^{\infty}|\phi_{r_2}|^{2} w_{m_1} \ dx
- \alpha_2 \int_{-\infty}^{\infty}|\phi_{l_2}|^{2} w_{m_1} \ dx,
\end{aligned}
\end{equation}
It is obvious that
$\int_{-\infty}^{\infty}|\phi_{r_2}|^{2} w_{m_1}\ dx >0$ \ \textrm{and} \ $\int_{-\infty}^{\infty}|\phi_{l_2}|^{2} w_{m_1}\ dx >0.$
Then, the strict inequality \eqref{Suba3} follows from \eqref{subcase3}.
In the case (iv), one has to prove that
\begin{equation}\label{Suba2}
\Theta(r_2,l_1,m_2)<\Theta(0,l_1,0) + \Theta(r_2,0,m_2),
\end{equation}
which can be done using an analogous argument as in the proof of \eqref{Suba3}.
It only remains to prove \eqref{SUBA} in case (v).
Consider the case (v), i.e., $r_1=0, r_2>0, l_2=0, \ \text{and} \ m_1>0.$
If $t_2>0,$ then \eqref{SUBA} follows by the use of part (i) of
Lemma~\ref{Seqprop} in the inequality \eqref{subaddKn}. Thus,
one can assume $t_2=0$ and prove that
\begin{equation}\label{Suba4}
\Theta(r_2,l_1,m_1)<\Theta(0,l_1,m_1) + \Theta(r_2,0,0).
\end{equation}
The proof of the inequality \eqref{Suba4} follows along the same lines as that of \eqref{Suba3} as well.
This completes the proof of \eqref{SUBA} in all cases.
\end{proof}

We now proceed to prove the existence result for normalized solutions.

\subsection{Existence result for (2+1)-component NLS-gKdV}

To any minimizing sequence $\{(f_{1n},f_{2n},g_n)\}$ is associated, up to taking a subsequence,
 a number $\mu$ given by
\begin{equation}\label{defmu}
\mu=\lim_{\zeta \to \infty} \lim_{n\to \infty}\sup_{y\in \mathbb{R}}\int_{y-\zeta}^{y+\zeta}\rho _{n}(x)\ dx,
\end{equation}
where the function $\rho _{n}(x)$ is defined by
\begin{equation*}
 \rho _{n}(x):=|f_{1n}(x)|^2 + |f_{2n}(x)|^2+g_{n}^2(x).
 \end{equation*}
Then the number $\mu$ satisfies $0\leq \mu \leq r+l+m.$
We will examine separately the three (mutually exclusive)
cases, $\mu = r+l+m$ (tightness),\ $0<\mu<r+l+m$ (dichotomy), and $\mu=0$ (vanishing).
Once we prove the tightness $\mu = r+l+m,$ then one can follow the same lines as in
the proof of the fundamental Lemma~I.1(i) of \cite{[L1]} to prove that
the translated sequence $\{(T_{y_n}f_{1n}, T_{y_n}f_{2n},T_{y_n}g_{n})\}$
has a subsequence which converges in $\h$ norm to a function in $\mathcal{O}_{r,l,m}.$ The proof differs
only in minor details and will not be repeated here.
Thus, in order to
prove Theorem~\ref{SOexistence}, it
suffices to rule out dichotomy and vanishing cases.

\begin{lem} \label{RSUBA}
Suppose $(r,l,m)\in \mathbb{R}_{+}^3$ and let $\{(f_{1n},f_{2n},g_n)\}$ be an $(r,l,m)$-admissible sequence
satisfying \eqref{MScondition}.
Let $\mu$ be as defined in $\eqref{defmu}.$ Then
there exists $(r_1,l_1,m_1)\in [0,r]\times [0,l] \times [0,m]$ satisfying
$\mu = r_1+l_1+m_1$
and
\begin{equation} \label{REV}
\Theta(r_1, l_{1},m_{1})+\Theta(r-r_{1}, l-l_{1},m-m_{1})\leq \Theta(r,l,m).
\end{equation}
\end{lem}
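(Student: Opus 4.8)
The plan is to invoke the concentration--compactness principle of Lions \cite{[L1]}, applied to the nonnegative $L^1$ densities $\rho_n = |f_{1n}|^2 + |f_{2n}|^2 + g_n^2$, whose total masses converge to $r+l+m$. The trichotomy provides exactly the three regimes named after the statement, and the endpoints are immediate. If $\mu = 0$ we take $(r_1,l_1,m_1)=(0,0,0)$, so that \eqref{REV} reads $\Theta(0,0,0)+\Theta(r,l,m)\le\Theta(r,l,m)$; if $\mu=r+l+m$ we take $(r_1,l_1,m_1)=(r,l,m)$; in both cases the claim follows from $\Theta(0,0,0)=E(0,0,0)=0$. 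Hence the substance is the dichotomy regime $0<\mu<r+l+m$, where I will \emph{construct} the parameters $(r_1,l_1,m_1)$.

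For each $\epsilon>0$, the dichotomy alternative yields a radius $R>0$, a sequence $R_n\to\infty$, and centers $y_n\in\mathbb{R}$ such that the mass of $\rho_n$ in the annulus $\{R\le|x-y_n|\le R_n\}$ is at most $\epsilon$ for $n$ large, while the masses in $\{|x-y_n|\le R\}$ and $\{|x-y_n|\ge R_n\}$ converge to $\mu$ and $r+l+m-\mu$. I would introduce smooth cutoffs $\chi_n$, equal to $1$ on $\{|x-y_n|\le R\}$ and $0$ on $\{|x-y_n|\ge R_n\}$ with $\|\chi_n'\|_{L^\infty}$ small, and split each component into a near part ($\chi_n$ times it) and a far part ($(1-\chi_n)$ times it). Passing to a subsequence, the near masses of the three components converge to numbers $r_1\in[0,r]$, $l_1\in[0,l]$, $m_1\in[0,m]$ whose sum equals $\mu$ up to an error $O(\epsilon)$; a diagonal argument along a sequence $\epsilon\to0$ then fixes the final triple and the exact identity $\mu=r_1+l_1+m_1$.

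Next I would prove near-additivity of the energy, namely
\[
E(f_{1n},f_{2n},g_n)\ \ge\ E(\mathrm{near}_n)+E(\mathrm{far}_n)-C\epsilon-o(1),
\]
with $o(1)\to0$ as $n\to\infty$. The quadratic gradient terms split with an error coming only from $\chi_n'$ and the annular region, hence $O(\epsilon)+o(1)$ by boundedness of the sequence in $H^1$ (Lemma~\ref{Neginf}). The power nonlinearities $\|f_{jn}\|_{L^{q_j+2}}^{q_j+2}$, $\|g_n\|_{L^{p+2}}^{p+2}$ and the coupling terms $F_j(f_{jn},g_n)=\alpha_j\int|f_{jn}|^2 g_n$ are essentially supported where $\chi_n\in\{0,1\}$, so their cross contributions live in the annulus and are controlled by the small $L^2$ mass there together with the Gagliardo--Nirenberg and H\"older estimates \eqref{estimate1}--\eqref{estimate2}. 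Controlling precisely these coupling terms $F_j$, which are not sign-definite and mix the short- and long-wave fields, is the main obstacle of the argument.

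Finally, the near and far parts have masses only approximately $(r_1,l_1,m_1)$ and $(r-r_1,l-l_1,m-m_1)$; rescaling each component by a factor tending to $1$ to pin the masses exactly perturbs the energy by $o(1)$ since the $H^1$ norms stay bounded, giving $E(\mathrm{near}_n)\ge\Theta(r_1,l_1,m_1)-o(1)$ and $E(\mathrm{far}_n)\ge\Theta(r-r_1,l-l_1,m-m_1)-o(1)$ (a component whose near mass tends to $0$ is simply dropped, discarding only a nonnegative kinetic term, which keeps the inequality in the correct direction). Letting $n\to\infty$ and then $\epsilon\to0$ in the near-additivity bound yields $\Theta(r,l,m)\ge\Theta(r_1,l_1,m_1)+\Theta(r-r_1,l-l_1,m-m_1)$, which is \eqref{REV}. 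Combined with the strict subadditivity of Lemma~\ref{subadd}, this will later force $\mu\in\{0,\,r+l+m\}$, thereby excluding dichotomy.
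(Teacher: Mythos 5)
Your argument is correct in outline and is exactly the standard Lions dichotomy argument; the paper itself gives no details for this lemma, deferring instead to Theorem~3.4 of \cite{[SanDCDS]} (a generalization of Theorem~3.10 of \cite{[AA]}), which carry out precisely the cutoff splitting, near-additivity of the energy, and mass-rescaling steps you describe. The two points that genuinely require care --- controlling the sign-indefinite coupling terms $F_j$ on the annulus via the uniform $H^1$ bound together with the small annular $L^2$ mass, and pinning the limit masses by rescaling factors tending to $1$ (so that no continuity of $\Theta$ in its arguments is needed) --- are both correctly identified and handled in your sketch.
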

\begin{proof}
The proof follows along the same lines as that
of Theorem~3.4 of \cite{[SanDCDS]}, which is a generalization of Theorem~3.10 of \cite{[AA]}. We omit the details.
\end{proof}

We are now able to prove the existence theorem.

\smallskip

\noindent \textbf{Proof of Theorem~\ref{SOexistence} (existence of prescribed $L^2$-norm solutions).} Suppose $(r,l,m)\in \mathbb{R}_{+}^3$ and let $\{(f_{1n},f_{2n},g_n)\}$ be any minimizing sequence
for $\Theta(r,l,m).$ As noted before, the existence of minimizers follows if we show that
$\mu =r+l+m,$
where $\mu$ is as defined in $\eqref{defmu}.$ Hence, to complete the proof of Theorem~\ref{SOexistence}, we only have to show that
\begin{itemize}
\item[(i)] $\mu\neq 0,$\ and
\item[(ii)] $\mu \not\in (0,r+l+m).$
\end{itemize}
In view of Lemmas~\ref{subadd} and \ref{RSUBA}, (ii) is clear.
So we only have to prove that $\mu\neq 0.$
A standard fact in the application of concentration compactness method (Lemma~I.1 of \cite{[L1]}) states
that if $\{u_n\}$ is bounded in $L^\alpha,$ $\{u_n^{\prime}\}$ is bounded in $L^p,$ and for some $\omega>0,$
\begin{equation}\label{lem4van2}
\lim_{n\to \infty}~\sup_{y\in \mathbb{R}}\int_{y-\omega}^{y+\omega}|u_n|^\alpha \ dx =0,
\end{equation}
then for every $q>\alpha,$
\begin{equation}\label{lem4van}
\lim_{n\to \infty}\int_{-\infty}^{\infty}|u_n|^q\ dx =0.
\end{equation}
If $\mu=0,$ then \eqref{lem4van2} holds for $u_n=|f_{1n}|,$ $u_n=|f_{2n}|,$ and $u_n=g_{n},$ and for every $\alpha >2,$
$f_{1n}, f_{2n},$ and $g_n$ all converge to $0$ in $L^\alpha.$
But then, since
\begin{equation*}
|F_j(f_{jn},g_n)|\leq \|f_{jn}\|_{L^4}^{1/2}\|g_n\|_{L^2},\ j=1,2,
\end{equation*}
and $\|g_n\|_{L^2}$ stays bounded, we have that $F_j(f_{jn},g_n)\to 0$ as $n\to \infty.$ As a result,
\begin{equation*}
\begin{aligned}
\Theta(r,l,m)& =\lim_{n\to \infty}E(f_{1n},f_{2n},g_n)\\
& \geq \liminf_{n\to \infty}\int_{-\infty}^{\infty}\left(|\partial_xf_{1n}|^2+|\partial_xf_{2n}|^2+|\partial_xg_{n}|^2 \right)\ dx \geq 0
\end{aligned}
\end{equation*}
which contradicts Lemma~\ref{Neginf} and hence, $\mu\neq 0.$

\smallskip

\noindent The fact that the complex-valued function $\phi_1$ is of
the form $\phi_1(x)=e^{i\theta_1}\tilde{\phi}_{1}(x)$ with $\theta_1\in \mathbb{R}$ and
$\tilde{\phi}$ real-valued nonnegative function can be easily proved by using the first equation of \eqref{SO}.
(A proof of this fact is given in Theorem~2.1 of \cite{[AA]} for $\gamma_1=0$ and
in Theorem~3.7 of \cite{[SanDCDS]} for $\gamma_1>0$; same proof works in the present situation.)
Similarly $\phi_2(x)=e^{i\theta_2}\tilde{\phi}_{2}(x)$ with $\theta_2\in \mathbb{R}$ and $\tilde{\phi_2}\geq 0$ on $\mathbb{R}.$
To continue the proof, we need the following result.

\smallskip

\underline{Claim.} Suppose $\{(f_{1n},f_{2n},g_n)\}\subset \mathcal{H}$ be an $(r,l,m)$-admissible
sequence satisfying the condition \eqref{MScondition}.
If $(r,l,m)\in \mathbb{R}_{+}^3,$ then there exists $\delta_1,\delta_2>0$ such that
for all $n$ large enough, one has that
\begin{equation*}
E_j(f_{jn})-F_j(f_{jn},g_n) \leq -\delta_j,\ j=1,2.
\end{equation*}
To see this, we take $j=1;$ the proof for $j=2$ follows same argument.
We argue by contradiction.
After choosing an appropriate subsequence if necessary,
assume that there exists a minimizing sequence
$\{(f_{1n},f_{2n},g_n)\}$ that satisfies
\begin{equation}
\liminf_{n \to \infty} \left[ E_1(f_{1n})-F_{1}(f_{1n},g_n)\right] \geq 0.
\end{equation}
This in turn implies that
\begin{equation}
\begin{aligned}
 \Theta(r,l,m)& = \lim_{n\to\infty} E(f_{1n},f_{2n},g_n) \\
& \ge \liminf_{n \to \infty}
 \left[E_2(f_{2n})+E_3(g_n)-F_{2}(f_{2n},g_n)\right].
\end{aligned}
 \label{Lemiv1}
\end{equation}
Take the functions $\phi_l$ and $w_m$ as defined in Lemma~\ref{nlskdvAB}. Then, in view of \eqref{Lemiv1}, we have that
$\Theta(r,l,m) \geq E_{23}(\phi_l,w_m).$
On the other hand, as in the proof of part (ii) of Lemma~\ref{Seqprop}, take any $R_r\in S_r$ satisfying
\begin{equation*}
\|\partial_xR_{r}\|_{L^2}^2-F_{1}(R_r,w_m) < 0.
\end{equation*}
Using this inequality, it is deduced that
\begin{equation*}
\begin{aligned}
\Theta(r,l,m)\leq E(R_r,\phi_l,w_m) & \leq E_{23}(\phi_l,w_m)+ \left(\|\partial_xR_{r}\|_{L^2}^2-F_{1}(R_r,w_m) \right)\\
& < E_{23}(\phi_l,w_m),
\end{aligned}
\end{equation*}
a contradiction. This completes the proof the claim.

\smallskip

\noindent Next, multiply the first and second equations of \eqref{ODE} by $\overline{\phi_1}$ and $\overline{\phi_2},$ respectively,
and integrate over the real line. After suitable integrations by parts, it follows immediately from
the above claim that $\sigma_1>0$ and $\sigma_2>0.$
To prove the remaining assertions of Theorem~\ref{SOexistence}, we borrow an argument from \cite{[AB11]}.
Since $\sigma_1>0$ and $\sigma_2>0,$ the first two equations in \eqref{ODE} can be rewritten in the following convolution form
\begin{equation}\label{convrepre}
\phi_j=P_{\sigma_j}\star \left(\gamma_j |\phi_j|^{q_j}\phi_j+\alpha_j\phi_jw \right),\ j=1,2,
\end{equation}
where for any $a>0,$ the kernel $P_a$ is defined via $\widehat{P}_a(k)=(s+k^2)^{-1}.$
Next using the fact that
\begin{equation*}
E(|\phi_1|,|\phi_2|,|w|)=E(|\phi_1|,|\phi_2|,w)=\Theta(r,l,m),
\end{equation*}
one can show that
\begin{equation}\label{posweq}
\int_{-\infty}^{\infty}|\phi_j|^2|w|\ dx = \int_{-\infty}^{\infty}|\phi_j|^2 w\ dx,\ j=1,2,
\end{equation}
(for details, readers may consult \cite{[AB11]}). Then, the identity \eqref{posweq} implies
that $w(x)\geq 0$ at all $x\in \mathbb{R}$ for which $\tilde{\phi_1}(x)\neq 0$ and $\tilde{\phi_2}(x)\neq 0.$
It then follows from the convolution identity \eqref{convrepre} that $\tilde{\phi_1}(x)>0$ and $\tilde{\phi_2}(x)>0$ for all $x\in \mathbb{R}.$
The proof that $w(x)>0$ goes through unchanged
as in the proof of Theorem~1.1~(iv) of \cite{[AB11]} and so will not be repeated here. \hfill $\Box$

\section{Stability Analysis for Solitary Waves}\label{fullvar}
\noindent In this section, consideration is given to the full variational problem \eqref{Vardef}.
To prove the existence of solutions to the problem \eqref{Vardef}, we establish
a relation between the solutions to \eqref{deftheta} and \eqref{Vardef}, following the
arguments of \cite{[AA],[AB11]}. Throughout this section, we assume that all conditions of \eqref{assumptions} hold and that
$1\leq p < 4/3.$
\subsection{The full variational problem}
We begin by showing that every minimizing sequence for $\Lambda(r,l,m)$ is bounded.
By a minimizing sequence for the problem \eqref{Vardef} we mean
a sequence $\{(h_{1n},h_{2n},g_n)\}\subset \h$
satisfying the conditions
\begin{equation*}
\lim_{n\to \infty}Q(h_{1n})=r,\ \ \lim_{n\to \infty}Q(h_{2n})=l,\ \ \lim_{n\to \infty}H(h_{1n},h_{2n},g_n)=m,
\end{equation*}
and
\begin{equation*}
\lim_{n\to \infty}E(h_{1n},h_{2n},g_n)=\Lambda(r,l,m).
\end{equation*}

\begin{lem}\label{boundfull}
If $\{(h_{1n},h_{2n},g_n)\}$ is a
minimizing sequence for \eqref{Vardef}, then there exists a constant $B>0$ such that
\begin{equation*}
\|h_{1n}\|_{H^1}+\|h_{2n}\|_{H^1}+\|g_{n}\|_{H^1}\leq B,\ \ \textrm{for all}\ n.
\end{equation*}
\end{lem}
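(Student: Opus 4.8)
The plan is to reproduce, for a single element of the minimizing sequence, the same \emph{a priori} estimate already carried out in the global theory for Theorem~\ref{Thm1} (the chain \eqref{con-31a}--\eqref{ap-1}), with the conserved values $E_0,H_0,Q_{j0}$ replaced by $E(\Delta_n)$, $H(\Delta_n)$, $Q(h_{jn})$; these are bounded sequences because they converge to $\Lambda(r,l,m)$, $m$, $r$, $l$. Writing $\Delta_n=(h_{1n},h_{2n},g_n)$ and setting $y_n=\sum_{j=1}^2\|\partial_x h_{jn}\|_{L^2}^2+\|\partial_x g_n\|_{L^2}^2$, the convergence $Q(h_{jn})\to r,l$ makes the $L^2$-norms $\|h_{jn}\|_{L^2}$ bounded, so it suffices to bound $y_n$ and $\|g_n\|_{L^2}$.

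First I would control $\|g_n\|_{L^2}$, which is \emph{not} directly constrained. From the definition \eqref{con-2} of $H$, together with the Cauchy--Schwarz inequality,
\begin{equation*}
\|g_n\|_{L^2}^2 = H(\Delta_n) - 2\,\textrm{Im}\sum_{j=1}^2\int_{-\infty}^{\infty} h_{jn}\,\overline{\partial_x h_{jn}}\,dx \le |H(\Delta_n)| + 2\sum_{j=1}^2\|h_{jn}\|_{L^2}\|\partial_x h_{jn}\|_{L^2}.
\end{equation*}
Since $|H(\Delta_n)|$ and $\|h_{jn}\|_{L^2}$ are uniformly bounded, this yields both $\|g_n\|_{L^2}^2\le C+\tfrac14\sum_j\|\partial_x h_{jn}\|_{L^2}^2$ (after Young) and, crucially, the product form displayed above, which is the one I would keep when estimating the long-wave term (exactly as \eqref{con-21a} is inserted into \eqref{con-4a}).

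Next, using $E(\Delta_n)\to\Lambda(r,l,m)$ and the expression \eqref{Edef} for $E$, I would solve for the gradient energy,
\begin{equation*}
y_n = E(\Delta_n) + \tau\int_{-\infty}^{\infty} g_n^{p+2}\,dx + \sum_{j=1}^2\Big(\tau_j\|h_{jn}\|_{L^{q_j+2}}^{q_j+2} + \alpha_j\int_{-\infty}^{\infty} |h_{jn}|^2 g_n\,dx\Big),
\end{equation*}
and estimate each nonlinear term by the Gagliardo--Nirenberg inequality of Proposition~\ref{GN-inequality} followed by Young's inequality, as in \eqref{con-4b}--\eqref{con-4c}. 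Under the assumptions \eqref{assumptions} one has $\tau_j,\tau,\alpha_j>0$ and $0<q_j<4$; hence each term $\tau_j\|h_{jn}\|_{L^{q_j+2}}^{q_j+2}$ is bounded by $\epsilon\|\partial_x h_{jn}\|_{L^2}^2+C_\epsilon$, and after splitting $\alpha_j\int|h_{jn}|^2g_n\le \tfrac12\|g_n\|_{L^2}^2+\tfrac12\|h_{jn}\|_{L^4}^4$ the quartic term $\|h_{jn}\|_{L^4}^4$ is likewise bounded by $\epsilon\|\partial_x h_{jn}\|_{L^2}^2+C_\epsilon$ while the $\|g_n\|_{L^2}^2$ piece is absorbed through the Young'd form of the $H$-bound from the previous paragraph.

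The main obstacle is the long-wave self-interaction $\tau\int g_n^{p+2}\,dx$, since it is estimated via $\|g_n\|_{L^2}$, which is not a priori bounded. Following \eqref{con-4a} and \eqref{con-4aa}, Gagliardo--Nirenberg gives $\|g_n\|_{L^{p+2}}^{p+2}\lesssim \|\partial_x g_n\|_{L^2}^{p/2}\|g_n\|_{L^2}^{(p+4)/2}$, and Young's inequality (which needs only $p<4$) produces $\epsilon\|\partial_x g_n\|_{L^2}^2 + C_\epsilon\big(\|g_n\|_{L^2}^2\big)^{(4+p)/(4-p)}$. Inserting the \emph{product} form of the $H$-bound, $\|g_n\|_{L^2}^2\lesssim 1+\sum_j\|h_{jn}\|_{L^2}\|\partial_x h_{jn}\|_{L^2}$, the surviving contribution is a multiple of $\|\partial_x h_{jn}\|_{L^2}^{(4+p)/(4-p)}$. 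The exponent $(4+p)/(4-p)$ is strictly less than $2$ exactly when $p<4/3$, which is precisely the standing restriction; this permits a final Young's inequality absorbing the term into $\tfrac1{16}\|\partial_x h_{jn}\|_{L^2}^2$ up to a constant. Collecting all estimates so that the total coefficient of the gradient terms on the right is strictly less than $1$, I would absorb them into $y_n$ to obtain $y_n+\|g_n\|_{L^2}^2\le\tilde C$ uniformly in $n$; together with the bounds on $\|h_{jn}\|_{L^2}$ this produces the desired constant $B$.
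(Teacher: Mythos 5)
Your proposal is correct and follows essentially the same route as the paper: both proofs use the $H$-constraint to control $\|g_n\|_{L^2}^2$ by $|H(\Delta_n)|+2\sum_j\|h_{jn}\|_{L^2}\|\partial_x h_{jn}\|_{L^2}$, then bound each nonlinear term in $E$ by Gagliardo--Nirenberg and Young, with $q_j<4$ and $p<4/3$ entering in exactly the same way. The only difference is bookkeeping: the paper collects everything into a single inequality $\|\Delta_n\|_{\h}^2\le C\bigl(1+\|\Delta_n\|_{\h}+\|\Delta_n\|_{\h}^{p/2}+\|\Delta_n\|_{\h}^{(3p+4)/4}+\sum_j\|\Delta_n\|_{\h}^{q_j/2}\bigr)$ with all exponents below $2$, whereas you close by $\epsilon$-absorption of the gradient terms as in the global-theory a priori estimate.
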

\begin{proof}
We begin by estimating the sum of the component masses.
Because $Q(h_{jn}), j=1,2,$ stay bounded, it then follows that
\begin{equation}\label{bound1}
\begin{aligned}
J_2\equiv \|g_n\|_{L^2}^2& +\sum_{j=1}^{2}\|h_{jn}\|_{L^2}^2 =\left\vert H(\Delta_n)-2\textrm{Im}\int_{-\infty}^{\infty}\sum_{j=1}^{2}h_{jn}\overline{\partial_xh_{jn}}\ dx \right\vert + \sum_{j=1}^{2}\|h_{jn}\|_{L^2}^2\\
& \leq C\left(1+\sum_{j=1}^{2} \|h_{jn}\|_{L^2}^2 \right)+ \sum_{j=1}^{2}\|h_{jn}\|_{L^2}^2 \leq C\left(1+\|\Delta_n\|_\h \right),
\end{aligned}
\end{equation}
where $\Delta_n=(h_{1n},h_{2n},g_n)$ and $C=C(r,l,m).$
Define the quantity $\mathcal{L}_{p}(bx,cy)=b|x|^{p+2}+c|x|^2y.$ Then it follows directly from \eqref{bound1} that
\begin{equation}\label{mainest}
\begin{aligned}
& \|\Delta_n\|_{\h}^{2} =E(\Delta_n)+\int_{-\infty}^{\infty} \left( \tau g_n^{p+2}+ \sum_{j=1}^{2}\mathcal{L}_{q_j}(\tau_jh_{jn},\alpha_jg_n)\right)\ dx+J_2 \\
&\ \ \leq C \|g_n\|_{L^{p+2}}^{p+2} + C \int_{-\infty}^{\infty}\sum_{j=1}^{2}\mathcal{L}_{q_j}(h_{jn},|g_n|)\ dx+C\left(1+\|\Delta_n\|_\h \right).
\end{aligned}
\end{equation}
The Gagliardo-Nirenberg inequality together with the estimate of $\|g_n\|_{L^2}^2$ as in \eqref{bound1} assures that
\begin{equation}\label{est1}
\|g_n\|_{L^{p+2}}^{p+2} \leq C\left(\|\Delta_n\|_{\h}^{p/2}+\|\Delta_n\|_{\h}^{(3p+4)/4} \right).
\end{equation}
Similarly, one can estimate
\begin{equation}\label{est2}
\begin{aligned}
\mathcal{L}_{q_j}(h_{jn},|g_n|)& \leq C \|\Delta_n\|_{\h}^{q_j/2}+C\|\partial_xh_{jn}\|_{L^2}^{1/2}\|g_n\|_{L^2}\\
& \leq C\left(1+\|\Delta_n\|_{\h}+\|\Delta_n\|_{\h}^{q_j/2}\right),\ j=1,2.
\end{aligned}
\end{equation}
Applying the estimates \eqref{est1} and \eqref{est2}, it follows from \eqref{mainest} that
\begin{equation*}
\|\Delta_n\|_{\h}^{2}\leq C\left(1+\|\Delta_n\|_\h+\|\Delta_n\|_{\h}^{p/2}+ \|\Delta_n\|_{\h}^{(3p+4)/4}+\sum_{j=1}^{2}\|\Delta_n\|_{\h}^{q_j/2} \right),
\end{equation*}
which in turn implies that $\|\Delta_n\|_\h$ is bounded.
\end{proof}

\noindent In the following lemma we
relate the solutions of \eqref{deftheta} to that of \eqref{Vardef}.

\begin{lem}\label{lem4existence}
Suppose that $(r,l,m)\in \mathbb{R}_{+}^2\times \mathbb{R},$ and let $b=b_{r,l,,m}(A)$ be as
defined in \eqref{bdef}. Then the following holds
\begin{equation}\label{decom2}
\Lambda(r,l,,m)=\inf\left\{\Theta(r,l,,A)+b^{2}(r+l):A\geq 0\right\}.
\end{equation}
Furthermore,
if $\{(h_{1n},h_{2n},g_n)\}\subset \h$ is a minimizing sequence for the problem $\Lambda(r,l,m),$ then there
exist a subsequence $\{(h_{1n_k}, h_{2n_k}, g_{n_k})\}$ and a number $A\geq 0$ such that the sequence
\begin{equation*}
\left\{\left(e^{ib_{r, l,m}(A)x}h_{1n_k},e^{ib_{r, l,m}(A)x}h_{2n_k}, g_{n_k}\right)\right\}
\end{equation*}
of functions in $\h$ forms a minimizing sequence for $\Theta(r,l,A).$ Moreover, we have that
\begin{equation}\label{relation1}
\Lambda(r,l,m)=\Theta(r,l,A)+b(r+l).
\end{equation}
Furthermore, one has $A>0$ provided that $\gamma_1=\gamma_2=0.$
\end{lem}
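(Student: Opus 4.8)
The plan is to reduce the three--constraint problem \eqref{Vardef} to the single--mass problem \eqref{deftheta} by exploiting the one--parameter family of phase modulations $h_j\mapsto e^{ibx}h_j$, which fix the masses $Q(h_j)$ and all the nonlinear terms in $E$ but shift the momentum recorded in $H$. Writing $P(\Delta):=2\,\textrm{Im}\int_{-\infty}^{\infty}\sum_{j=1}^{2}h_j\overline{\partial_x h_j}\,dx$, a direct computation (expanding $|\partial_x(e^{ibx}h_j)|^2$ and $\textrm{Im}(e^{ibx}h_j\,\overline{\partial_x(e^{ibx}h_j)})$) gives, for any $\Delta=(h_1,h_2,g)$ with $Q(h_1)=r$, $Q(h_2)=l$,
\begin{align*}
E(e^{ibx}h_1,e^{ibx}h_2,g)&=E(\Delta)+b^2(r+l)-bP(\Delta),\\
H(e^{ibx}h_1,e^{ibx}h_2,g)&=H(\Delta)-2b(r+l).
\end{align*}
These two identities are the only facts about the modulation I will need.

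First I would prove the decomposition \eqref{decom2}. For ``$\le$'', fix $A\ge0$ and take the real--valued minimizing sequence for $\Theta(r,l,A)$ provided by Lemma~\ref{specialmin}; since its members are real, $P\equiv0$, so choosing $b$ with $-2b(r+l)=m-A$ (that is $b=-b_{r,l,m}(A)$) sends each modulated triple into $\Pi_{r,l,m}$ with energy raised by exactly $b^2(r+l)=b_{r,l,m}(A)^2(r+l)$; passing to the limit and then infimizing over $A$ yields $\Lambda(r,l,m)\le\inf_{A\ge0}\{\Theta(r,l,A)+b_{r,l,m}(A)^2(r+l)\}$. For ``$\ge$'', given any $\Delta\in\Pi_{r,l,m}$ set $A=\|g\|_{L^2}^2$, so $P(\Delta)=m-A$, and modulate by $b=b_{r,l,m}(A)$ to kill the momentum; since $Q(h_j)$ and $\|g\|_{L^2}^2$ are exact, the triple $(e^{ibx}h_1,e^{ibx}h_2,g)$ lies in $S_r\times S_l\times K_A$, and the identities give $E(\Delta)=E(e^{ibx}h_1,e^{ibx}h_2,g)+b^2(r+l)\ge\Theta(r,l,A)+b_{r,l,m}(A)^2(r+l)$, whence the reverse bound after infimizing over $\Delta$.

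Next, for the minimizing--sequence claim I would take a minimizing sequence $\{(h_{1n},h_{2n},g_n)\}$ for $\Lambda(r,l,m)$, which is bounded in $\h$ by Lemma~\ref{boundfull}. Hence $A_n:=\|g_n\|_{L^2}^2$ is bounded and, along a subsequence, $A_n\to A\ge0$, while $P_n:=P(\Delta_n)=H(\Delta_n)-A_n\to m-A$. Putting $b=b_{r,l,m}(A)$ and $\hat h_{jn}=e^{ibx}h_{jn}$, the energy identity gives $\lim_n E(\hat h_{1n},\hat h_{2n},g_n)=\Lambda(r,l,m)+b^2(r+l)-b(m-A)=\Lambda(r,l,m)-b^2(r+l)$. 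Since $\{(\hat h_{1n},\hat h_{2n},g_n)\}$ is $(r,l,A)$--admissible, a lower--semicontinuity argument (rescaling each component to the exact constraints, the factors tending to $1$, and using the estimates of Lemma~\ref{Neginf} to handle the coupling and gKdV terms when $A=0$) gives $\liminf_n E(\hat h_{1n},\hat h_{2n},g_n)\ge\Theta(r,l,A)$. Combined with \eqref{decom2} this forces the equality $\Lambda(r,l,m)=\Theta(r,l,A)+b^2(r+l)$, i.e. \eqref{relation1}, and at the same time shows that $\{(\hat h_{1n},\hat h_{2n},g_n)\}$ is a minimizing sequence for $\Theta(r,l,A)$.

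Finally, the delicate point, which I expect to be the main obstacle, is proving $A>0$ when $\gamma_1=\gamma_2=0$. Set $V(A):=\Theta(r,l,A)+b_{r,l,m}(A)^2(r+l)$. With $\gamma_j=0$ one has $E(f_1,f_2,0)=\|\partial_x f_1\|_{L^2}^2+\|\partial_x f_2\|_{L^2}^2$, whose infimum over $S_r\times S_l$ is $0$, so $V(0)=\frac{m^2}{4(r+l)}$. By the decomposition it suffices to exhibit one $A>0$ with $V(A)<V(0)$, for then $V(A)=\inf V<V(0)$ forces the $A$ produced above to be positive. To this end I would test $\Theta(r,l,A)$ on the two--parameter family $\Delta_\theta=\bigl(\theta^{1/2}f_1(\theta\,\cdot),\theta^{1/2}f_2(\theta\,\cdot),\theta^{1/2}A^{1/2}g_0(\theta\,\cdot)\bigr)$ with fixed positive profiles $f_1\in S_r$, $f_2\in S_l$, $g_0\in K_1$; the kinetic part scales like $\theta^2$, the coupling $-\sum_j\alpha_j\int|f_j|^2g$ like $-\theta^{1/2}A^{1/2}$, and the subcritical gKdV term is of higher order in $A$. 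Optimizing in $\theta$ (the balance is $\theta\sim A^{1/3}$) gives $\Theta(r,l,A)\le -cA^{2/3}$ for small $A>0$. Since then $V(A)-V(0)\le -cA^{2/3}+\frac{A^2-2mA}{4(r+l)}$ and the term $A^{2/3}$ dominates the linear term as $A\to0^+$ irrespective of the sign of $m$, we obtain $V(A)<V(0)$ for small $A>0$; hence the infimum in \eqref{decom2} is not attained at $A=0$, and therefore $A>0$, as claimed.
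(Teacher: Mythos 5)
Your proof is correct, and for the decomposition \eqref{decom2} and the minimizing-sequence/identity \eqref{relation1} part it follows essentially the paper's route: modulate by $e^{ibx}$, use the two identities for how $E$ and $H$ transform, take $A=\|g\|_{L^2}^2$ for the lower bound, and normalize the admissible sequence to exact constraints (splitting off the case $A=0$). A minor variation: in the ``$\le$'' direction the paper works with an arbitrary competitor in $S_{r\times l}\times K_A$ and kills its momentum with component-wise phases $k_j=(c_j/r)-b$, $k_j=(c_j/l)-b$, whereas you restrict to the real-valued minimizing sequences of Lemma~\ref{specialmin} so that $P\equiv 0$; both reach the same infimum. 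The genuine divergence is in the proof that $A>0$ when $\gamma_1=\gamma_2=0$. The paper's argument is soft: if $A=0$ then $\Theta(r,l,0)\ge 0$, hence $\Lambda(r,l,m)=\Theta(r,l,0)+b^2(r+l)\ge 0$, contradicting $\Lambda(r,l,m)<0$, which it gets by testing $E$ on the scaled triple $\Delta_\theta$ of Lemma~\ref{Neginf} (a real-valued element of $\Sigma_{r,l,m}$, which lies in $\Pi_{r,l,m}$ since $H=\|g\|_{L^2}^2=m$ for real triples). Your argument is quantitative: you compute $V(0)=m^2/(4(r+l))$ and show $V(A)\le V(0)-cA^{2/3}+O(A)$ for small $A>0$ by balancing the $\theta^2$ kinetic term against the $\theta^{1/2}A^{1/2}$ coupling term at $\theta\sim A^{1/3}$, so the infimum in \eqref{decom2} cannot be taken at $A=0$. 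Your version costs a scaling computation but buys something real: the paper's negativity argument needs $K_m\neq\emptyset$, i.e.\ $m>0$, while the lemma is stated for all $m\in\R$; your comparison $V(A)<V(0)$ works irrespective of the sign of $m$, so it actually covers the full stated range more cleanly.
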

\begin{proof}
To prove \eqref{decom2}, suppose first that $A\geq 0$ and
let $(h_1,h_2,g)\in S_{r\times l}\times K_A$ be given.
Let $b=b_{r,l,m}(A)$ be as defined in \eqref{bdef} and%
\begin{equation*}
c_j=\textrm{Im}\int_{-\infty }^{\infty }h_j \overline{\partial_xh_{j}}\ dx,\ j=1,2.
\end{equation*}
Put $f_j(x)=e^{ik_jx}h_j(x)$ with $k_1=(c_1/r)-b$ and $k_2=(c_2/l)-b.$ Then,
for $\Delta=(f_1,f_2,g)$ and $U=(h_1,h_2,g),$ an elementary calculation gives
\begin{equation*}
H(\Delta) =H(U)- 2\sum_{j=1}^{2}k_j \|h_j\|_{L^2}^2= A+2(c_1+c_2)-2(k_1r+k_2l)=m.
\end{equation*}
Since $Q(f_1)=Q(h_1)=r$ and $Q(f_2)=Q(h_2)=l,$ we conclude that%
\begin{equation}\label{decomE}
\begin{aligned}
\Lambda(r,l,m) & \leq E(\Delta)=E(U)+\sum_{j=1}^{2} k_{j}^{2}\|h_j\|_{L^2}^2
-2\sum_{j=1}^{2}k_j \ {\rm Im} \int_{-\infty }^{\infty }h_j~\overline{\partial_xh_j}\ dx\\
& = E(U)+b^2(r+l)- \frac{c_1}{r}- \frac{c_2}{l} \leq E(U)+b^{2}(r+l).
\end{aligned}
\end{equation}
One can now take infimum over the set $S_{r\times l}\times K_A$ to obtain
\begin{equation}\label{InfInqA}
\Lambda(r,l,t)\leq \inf \left\{\Theta(r,l,A)+b^{2}(r+l):A\geq 0 \right\}.
\end{equation}
To obtain the reverse inequality, let $(r,l,m)\in \mathbb{R}_{+}^2\times \mathbb{R}$ be given and
$U=(h_1,h_2,g)\in \h$ be such that $(h_1,h_2)\in S_{r\times l}$ and $H(U)=m.$ We will
show that there exists $A\geq 0$ such that%
\begin{equation*}
E(U)\geq \Theta(r,l,A)+b^{2}(r+l).
\end{equation*}
Choose $A=\|g\|_{L^2}^{2}.$ Then, by the definition of $H,$ we have that
\begin{equation*}
A=m-2\sum_{j=1}^{2}\text{Im}\int_{-\infty }^{\infty }h_j\overline{\partial_xh_j}\ dx.
\end{equation*}
For $j=1,2,$ define $f_j(x)=e^{ib_{r,l,m}(A)x}h_j(x),$ where $b=b_{r,l,m}(A)$ is as defined in \eqref{bdef}.
Then, a straightforward calculation yields
\begin{equation*}
\begin{aligned}
E(\Delta) & =E(U)+\sum_{j=1}^{2}b^{2}\|h_j\|_{L^2}^2-2\sum_{j=1}^2 b \ \textrm{Im}\int_{-\infty }^{\infty }h_j\overline{\partial_xh_j}\ dx \\
&= E(U)+ b^2(r+l)- b(m-A)=E(U)- b^2(r+l),
\end{aligned}
\end{equation*}
from which it is obvious that $E(U)=E(\Delta)+b^2(r+l).$
Since $Q(f_1)=Q(h_1)=r$ and $Q(f_2)=Q(h_2)=l,$ and $%
g \in K_A,$ we have that $A \geq 0$ and $E(\Delta)\geq \Theta(r,l,A).$
In consequence, one has that
\begin{equation*}
\begin{aligned}
E(U)\geq \Theta(r,l,A) + b^2(r+l)\geq \inf_{A\geq 0}\left\{ \Theta(r,l,A)+ b^2(r+l)\right\}.
\end{aligned}
\end{equation*}
Upon taking infimum over all functions $U\in \h$ such that $(h_1,h_2)\in S_{r\times l}$ and $H(U)=m,$ we obtain the reverse inequality
\begin{equation}\label{InfInqB}
\Lambda(r,l,m) \geq \inf_{A\geq 0}\left\{ \Theta(r,l,A)+ b^2(r+l)\right\}.
\end{equation}
Putting the inequalities \eqref{InfInqA} and \eqref{InfInqB} together, we see that identity \eqref{decom2} holds.

\smallskip

\noindent Next, denote $\Delta_n=(h_{1n},h_{2n},g_n).$ The sequence $\{A_n\}$ of real numbers given by
\begin{equation*}
A_n=\|g_n\|_{L^2}^2=m-2\sum_{j=1}^{2}\text{Im}\int_{-\infty }^{\infty }h_{jn}\overline{\partial_xh_{jn}}\ dx
\end{equation*}
is bounded. Therefore, by extracting an appropriate subsequence, one may assume that $A_n$ converges to $A\geq 0.$
So by restricting consideration to the corresponding subsequence, let $b=b_{r, l,m}(A)$ and define $f_{jn}(x)=e^{ibx}h_{jn}(x).$
Denote $U_n=(f_{1n},f_{2n},g_n).$ Then one can invoke \eqref{decom2} to obtain
\begin{equation*}
\begin{aligned}
\lim_{n\to \infty}E(U_n)& =\lim_{n\to \infty}\left[E(\Delta_n)+b^2(r+l)-b(m-A_n) \right] \\
& = \Lambda(r,l,m)-b^2(r+l)\leq \Theta(r,l,m).
\end{aligned}
\end{equation*}
To obtain the reverse inequality, suppose first that $A>0.$ Then the sequences of
numbers $\alpha_{1n}=\sqrt{r}/\|f_{1n}\|_{L^2}^2, \alpha_{2n}=\sqrt{l}/\|f_{2n}\|_{L^2}^2,$ and
$\beta_n=\sqrt{A}/\|g_{n}\|_{L^2}^2$ are well-defined for
sufficiently large $n.$ Since $\|\alpha_{1n}f_{1n}\|_{L^2}^2=r,\ \|\alpha_{2n}f_{2n}\|_{L^2}^2=l,$ and
$\|\beta_ng_n\|_{L^2}^2=A,$ it follows immediately that
\begin{equation*}
\lim_{n\to \infty}E(f_{1n},f_{2n},g_n)=\lim_{n\to \infty}E(\alpha_{1n}f_{1n},\alpha_{2n}f_{2n},\beta_ng_n)\geq \Theta(r,l,m).
\end{equation*}
If $A=0,$ then one has that
\begin{equation*}
\lim_{n\to \infty}E(U_n) =\lim_{n\to \infty}\sum_{j=1}^{2}E_j(f_{jn})
= \Theta(r,l,0).
\end{equation*}
It now follows that the relation \eqref{relation1} holds and that $E(U_n)\to \Theta(r,l,m),$ this
means that $\{U_n\}$ is a minimizing sequence for $\Theta(r,l,m).$

\smallskip

Finally, consider the case when $\gamma_1=\gamma_2=0.$ Suppose, for the sake of contradiction, that $A=0.$ Then
\begin{equation*}
\Theta(\lambda_1,\lambda_2,0)=\inf\left\{\int_{-\infty}^{\infty}\left(|\partial_xf|^2+|\partial_xg|^2 \right)\ dx:\|f\|_{L^2}^2=\lambda_1,\ \|g\|_{L^2}^2=\lambda_2  \right\}.
\end{equation*}
It is clear that $\Theta(r,l,0)\geq 0$ and an application of \eqref{relation1} gives $\Lambda(r,l,m)\geq 0.$
On the other hand, let $\Delta_\theta$ be as considered in Lemma~\ref{Neginf}. Then one obtains $E(\Delta_\theta)<0,$ which in turn
implies that $\Lambda(r,l,m)<0,$ a contradiction.
\end{proof}

\subsection{Stability result for (2+1)-component NLS-gKdV}

We now prove Theorems~\ref{P2thm} and \ref{stabilitytheorem}.

\smallskip

\noindent \textbf{Proof of Theorem~\ref{P2thm} (existence result).} To prove part (i), using the same notation as in Lemma~\ref{lem4existence}, we may
assume by passing to an appropriate subsequence that $\{(e^{ibx}h_{1n},e^{ibx}h_{2n},g_n)\}$ is a
minimizing sequence for $\Theta(r,l,A),$ for $A\geq 0,$ $b=b_{r,l,m}(A),$ and the relation \eqref{relation1} holds.
If $A>0,$ then Theorem~\ref{SOexistence} allows us to conclude, again possibly for a subsequence only,
that there exists a family $(y_n)\subset \mathbb{R}$ such that
\begin{equation*}
\left\{ \left(e^{ib(\cdot+y_n)}h_{1n}(\cdot+y_n),e^{ib(\cdot+y_n)}h_{2n}(\cdot+y_n),g_n(\cdot+y_n)\right)\right\}
\end{equation*}
converges in $\h$ to some $U=(\phi_1,\phi_2,w)$. The same conclusion holds in the case when $A=0$ as well
(This can be easily checked using the identity obtained in the last paragraph of the proof of Lemma~\ref{lem4existence}.)
Furthermore, $U$ is a minimizing function for
$\Theta(r,l,A).$
For $j=1,2,$ by passing to an appropriate subsequence yet again, one may assume
that $e^{iby_n}\to e^{i\theta}$ for some number $\theta \in [0,2\pi).$
It then follows immediately that
\begin{equation*}
\left(h_{1n}(\cdot+y_n),h_{2n}(\cdot+y_n),g_n(\cdot+y_n) \right)\to (\Phi_1,\Phi_2,w),
\end{equation*}
in $\h,$ where $\Phi_j$ are given by $\Phi_j(x)=e^{-i(bx+\theta)}\phi_j(x).$ Let us denote $V=(\Phi_1,\Phi_2,w).$ Then, a calculation
similar to that made in \eqref{decomE} yields
\begin{equation*}
\begin{aligned}
\Theta(r,l,m)& =E(U)=E(V)+b^2\sum_{j=1}^2\|\Phi_j\|_{L^2}^2-2b\sum_{j=1}^2 \textrm{Im}\int_{-\infty}^{\infty}\Phi_j\overline{\partial_x\Phi_j}\ dx\\
& = E(V)+b^2(r+l)-b\left(H(V)-\|w\|_{L^2}^2\right)=E(V)-b^2(r+l).
\end{aligned}
\end{equation*}
Then, from the relation \eqref{relation1}, it follows that $V$ is a minimizing function
for the problem $\Lambda(r,l,m),$ which completes the proof.

\smallskip

To prove part (ii), let $(\Phi_1,\Phi_2,w)$ be a solution of \eqref{Vardef}.
By the first part of Lemma~\ref{lem4existence}, it follows that
$(e^{ibx}\Phi_1,e^{ibx}\Phi_2,w)$ is a minimizing
sequence (and hence a minimizer) for $\Theta(r,l,\|w\|_{L^2}^2),$ where $b$ is as defined in \eqref{bdef} with $A=\|w\|_{L^2}^2.$
Then, invoking Theorem~\ref{SOexistence}, there
exists $(\theta_j,\phi_j)\in \mathbb{R}\times H_{+}^1(\mathbb{R})$ such that
\begin{equation*}
\left(e^{ibx}\Phi_1, e^{ibx}\Phi_2 \right)=\left( e^{i \theta_1}\phi_1, e^{i\theta_2}\phi_2\right).
\end{equation*}
Furthermore, if $\gamma_1=\gamma_2=0,$ then the last assertion of Lemma~\ref{lem4existence} implies that $A>0.$
Since $(\phi_1,\phi_2,w)$ belongs to $\mathcal{O}_{r,l,A},$ Theorem~\ref{SOexistence} guarantees
that $w(x)>0$ for $x\in \mathbb{R}.$ \hfill $\Box$

\medskip

Finally, we prove the stability result.

\smallskip

\textbf{Proof of Theorem~\ref{stabilitytheorem} (stability result).} Part (i) is an easy consequence of the
existence result (Theorem~\ref{P2thm}).
To prove part (ii),
suppose that $\mathcal{P}_{r,l,m}$ is not stable.
Then there exists a sequence of solutions $\{(u_{1n},u_{2n},v_n)\}$ of \eqref{3LSI} and a sequence of times $\{t_n\}$ such that
$(u_{1n}(\cdot,0),u_{2n}(\cdot,0),v_n(\cdot,0))$ converges to $\mathcal{P}_{r,l,m},$ but $(u_{1n}(\cdot,t_n),u_{2n}(\cdot,t_n),v_n(\cdot,t_n))$
does not converge to $\mathcal{P}_{r,l,m}$ in $\h.$ Since $E,Q,$ and $H$ are
constants of the motion of \eqref{3LSI} and are continuous on $X,$ it follows that
\begin{equation*}
\begin{aligned}
& \lim_{n\to \infty}Q(u_{1n}(\cdot,t_n))= r, \ \ \lim_{n\to \infty}Q(u_{2n}(\cdot,t_n))= l, \\
& \lim_{n\to \infty} H(u_{1n}(\cdot,t_n),u_{2n}(\cdot,t_n),v_n(\cdot,t_n)) = m,\ \textrm{and} \\
& \lim_{n\to \infty} E(u_{1n}(\cdot,t_n),u_{2n}(\cdot,t_n),v_n(\cdot,t_n)) = \Lambda(r,l,m).
\end{aligned}
\end{equation*}
Hence, from part (i), it follows that $(u_{1n}(\cdot,t_n),u_{2n}(\cdot,t_n),v_n(\cdot,t_n))$
converges to $\mathcal{P}_{r,l,m}$ in $\h,$ which is a contradiction.

\smallskip

To prove part (iii),
suppose $(\Phi_1,\Phi_2,w_1)\in \mathcal{P}_{r_1,l_1,m_1}$
and $(\Phi_3,\Phi_4,w_2)\in \mathcal{P}_{r_2,l_2,m_2},$ where $(r_1,l_1,m_1)\neq (r_2,l_2,m_2).$
 We wish to prove that $(\Phi_1,\Phi_2,w_1)\neq (\Phi_3,\Phi_4,w_2).$
 If $r_1\neq r_2,$ then the desired conclusion is clear. So assume that $r_1=r_2$ and $m_1\neq m_2.$
Let us denote
\begin{equation*}
\eta_1=\frac{\|w_1\|_{L^2}^2-m } {2(r_1+l_1)}\ \textrm{and}\ \eta_3=\frac{\|w_2\|_{L^2}^2-m } {2(r_2+l_2)}.
\end{equation*}
Then, part (ii) of Theorem~\ref{P2thm}, there exists a pair of real
numbers $\theta_1, \theta_3$ and a pair of $\mathbb{R}$-valued functions $\phi_1, \phi_3$ such that
\begin{equation}\label{true3}
\begin{aligned}
\Phi_1(x)=e^{i(\eta_1x+\theta_1)}\phi_1(x)\ \textrm{and}\ \Phi_3(x)=e^{i(\eta_3x+\theta_3)}\phi_3(x).
\end{aligned}
\end{equation}
One may assume that $\Phi_1=\Phi_3,$ since otherwise the desired conclusion follows.
Then \eqref{true3} implies that
 \begin{equation*}
 e^{i((\eta_1-\eta_3)x+(\theta_1-\theta_3)) }= \phi_3(x)/ \phi_1(x)
 \end{equation*}
 is a $\mathbb{R}$-valued function on $\mathbb{R},$ and hence we must have $\eta_1=\eta_3.$
 Since $r_1=r_2,$ this in turn gives
 \begin{equation} \label{trueaux}
 l_2\left(\|w_1\|_{L^2}^2-m_1\right)= l_1\left(\|w_2\|_{L^2}^2-m_2\right).
 \end{equation}
 If $l_1\neq l_2,$ then $\|\Phi_2\|_{L^2}^2\neq \|\Phi_4\|_{L^2}^2,$ and hence $\Phi_2\neq \Phi_4,$ the conclusion follows.
 So we may assume that $l_1=l_2.$ Then \eqref{trueaux} implies
 $\|w_1\|_{L^2}^2-m_1= \|w_2\|_{L^2}^2-m_2$ and since $t_1\neq m_2,$ this in turn implies
  $\|w_1\|_{L^2}^2\neq \|w_2\|_{L^2}^2,$ and hence $w_1\neq w_2.$ This completes the proof of Theorem~\ref{stabilitytheorem}.

\bigskip
\noindent
{\bf Acknowledgment.} The authors are thankful to Professor John Albert and Professor Felipe Linares for their teaching. A. J. Corcho was partially supported by CAPES and
CNPq/Edital Universal - 481715/2012-6, Brazil and  M. Panthee acknowledges supports from Brazilian agencies FAPESP 2012/20966-4 and CNPq 479558/2013-2 (Edital Universal)  \& 305483/2014-5.
\medskip


\begin{thebibliography}{99}

\bibitem{[AA]} J. Albert and J. Angulo, {\em Existence and stability
of ground-state solutions of a Schr\"{o}dinger-KdV system}, Proc. of the
Royal Soc. of Edinburgh, \textbf{133A} (2003) 987--1029.

\bibitem{[AB11]} J.~Albert and S.~Bhattarai, {\em Existence and stability of a two-parameter
family of solitary waves for an NLS-KdV system}, Adv. Differential Eqs.,  {\bf 18} (2013) 1129 -- 1164.

\bibitem{[AP1]} J.~Angulo, {\em Stability of solitary wave solutions
for equations of short and long dispersive waves}, Elec. J. of Diff. Eqns.
{\bf 72} (2006) 1--18

\bibitem{[Bar]} T.~Bartsch and L.~Jeanjean, {\em Normalized solutions for nonlinear Schr\"{o}dinger systems}, preprint, arXiv:1507.04649.

\bibitem{BOP} D. Bekiranov, T. Ogawa and G. Ponce, {\em Interaction equations for short and long dispersive
waves}, J. Funct. Anal. {\bf 158} (1998) 357--388.

\bibitem{[Benj]} T. B. Benjamin, {\em The stability of solitary waves}, Proc. Roy. Soc. London Ser. A, \textbf{328} (1972), 153--183.

\bibitem{[San3LS]} S.~Bhattarai, {\em Existence and positivity properties of solitary waves for a
          multicomponent long wave-short wave interaction system}, preprint, arXiv:1508.07598

\bibitem{[SanDCDS]} S.~Bhattarai, {\em Stability of normalized solitary waves for three coupled nonlinear Schr\"{o}dinger equations},
       to appear in Discre. Contin. Dyn. Syst. arXiv:1509.00425

\bibitem{[Ca]} T.~Cazenave, {\em Semilinear Schr\"odinger equations}, Courant Lecture Notes in Mathematics, vol.\ {\textbf 10},
American Mathematical Society, Providence, 2003.

\bibitem{CLi} T.~Cazenave and P.-L. Lions, {\em Orbital stability of standing waves for some nonlinear Schr\"{o}dinger
               equations}, Comm. Math. Phys. {\bf 85} (1982) 549--561.

\bibitem{[C]} L. Chen, {\em Orbital stability of solitary waves of the nonlinear Schr\"{o}dinger-KDV equation}, J.
Partial Diff. Eqs. \textbf{12} (1999), 11--25.

\bibitem{CKSTT-1} J. Colliander, M. Keel, G. Staffilani, H. Takaoka and T. Tao; {\em Global well-posedness
for the KdV in Sobolev spaces of negative indices}, Elect. J. Differ. Eqns., {\bf 26} (2001)
1--7.

\bibitem{CKSTT-2} J. Colliander, M. Keel, G. Staffilani, H. Takaoka and T. Tao; {\em Sharp global wellposedness
for KdV and modified Kdv on $\R$ and $\T$}, J. Amer. Math. Soc. {\bf 16} (2003)
705--749.

\bibitem{[Colo]} E.~Colorado, {\em On the existence of bound and ground states for some
  coupled nonlinear Schr\"{o}dinger--Korteweg-de Vries equations}, preprint, arXiv:1411.7283.

\bibitem{CL} A. J. Corcho and F. Linares; {\em Well-posedness for the Schr\"odinger-Korteweg-de Vries system}, Trans. AMS {\bf 359} (2007) 4089--4106.

\bibitem{Te} T. Esteves; {\em O problema de Cauchy para a equa\c c\~ao KdV super-sim\'etrica com dado inicial pequeno}, Master Dissertation. Federal University of Piaui, Brazil (2014).

\bibitem{[Gar1]} D. Garrisi, {\em On the orbital stability of standing-waves
            pair solutions of a coupled non-linear Klein-Gordon equation}, Adv. Nonlinear Studies, \textbf{12} (2012) 639--658.

\bibitem{GTV} J. Ginibre, Y. Tsutsumi and G. Velo; {\em On the Cauchy problem for the Zakharov system}, J. Funct. Anal. {\bf 151} (1997) 384--436.

\bibitem{Guo} B. Guo and C. Miao; {\em Well-posedness of the Cauchy problem for the coupled system
of the Schr\"{o}dinger-KdV equations}, Acta Math. Sinica, Engl. Series {\bf 15} (1999) 215--224.

\bibitem{[Ikoma]} N.~Ikoma, {\em Compactness of minimizing sequences in nonlinear Schr\"{o}dinger systems under
                multiconstraint conditions}, Adv. Nonlinear Studies {\bf 14} (2014) 115--136.

\bibitem{KPV-a}  C. E. Kenig,  G. Ponce and  L. Vega; {\em Well-posedness of the initial value problem
for the Korteweg-de Vries quation}, J. Amer.Math. Soc., {\bf 4}   (1991), 323--347.

\bibitem{KPV} C. E. Kenig, G. Ponce and L. Vega; \emph{A bilinear estimate with applications to the KdV equation},
J. Amer. Math. Soc. \textbf{9  2} (1996)  573--603.

\bibitem{[L1]} P.~L.~Lions, {\em The concentration-compactness principle in
the calculus of variations. The locally compact case, Part 1}, Ann. Inst. H. Poincare
Anal. Non-lin\'{e}aire {\bf 1} (1984) 104--145.

\bibitem{[LL]} E. H. Lieb, M. Loss, {\em Analysis}, 2nd ed., Graduate studies in mathematics, {\bf  14} American
Mathematical Society, Providence, 2001.

\bibitem{[Mo]} L.~Molinet and F.~Ribaud, {\em  Well-Posedness Results for the Generalized Benjamin-Ono Equation with Arbitrary Large Initial Data}, IMRN International Math. Res. Notices No 70 (2004) 3757--3795.

\bibitem{[Mu]} C.~Muscalu and  W.~Schlag, {\em Classical and Multilinear Harmonic Analysis}, Vol. I Cambridge (2013).

\bibitem{[Noris]} B.~Noris, H.~Tavares, G.~Verzini, {\em Stable solitary  waves  with  prescribed $L^2$-mass  for  the
cubic Schr\"{o}dinger system with trapping potentials},  Discr. Contin. Dyn. Syst. A {\bf 35} (2015) 6085--6112.

\bibitem{[Pec]} H. Pecher, {\em The Cauchy problem for a  Schr\"{o}dinger-Korteweg-de Vries system with
rough data}, Diff. Int. Eqns. {\bf 18} (2005) 1147--1174.

\bibitem{[Tsu]} M.~Tsutsumi, {\em Well-posedness of the Cauchy problem for a coupled Schr\"{o}dinger-KdV equation},
Math. Sciences Appl. {\bf  2} (1993) 513--528.

\bibitem{WU} Y. Wu, {\em The Cauchy problem of the Schr\"odinger-Korteweg-de Vries system}, Diff. Int. Equations {\bf 23}  (2010) 569--600.

\end{thebibliography}
\end{document}